\documentclass[12pt]{amsart}
 \usepackage{amssymb,amsfonts,amsthm,amscd,verbatim, color}
\usepackage[square, comma, sort&compress, numbers]{natbib}
 \usepackage{graphicx}
 \usepackage{bm,comment}
 \topmargin=0pt \oddsidemargin=0pt \evensidemargin=0pt
 \textwidth=15cm \textheight=22cm \raggedbottom

\newtheorem{thm}{Theorem}[section]
\newtheorem{lem}{Lemma}[section]
\newtheorem{cor}{Corollary}[section]
\newtheorem{pro}{Proposition}[section]

\newtheorem{ex}[thm]{Example}

\theoremstyle{definition}

\theoremstyle{remark}
\newtheorem{rem}{Remark}[section]
\numberwithin{equation}{section}

\def\R{{\Bbb R}}
\let\cal=\mathcal

\linespread{1}

\begin{document}

\title[Systematic Measures of Biological Networks, Part I]{Systematic Measures of Biological Networks,
Part I: Invariant measures and Entropy}
\author[Y. Li] {Yao Li}
\address{Y. Li: Department of Mathematics and Statistics, University
  of Massachusetts Amherst, MA, 01003, USA and Courant Institute of
  Mathematical Sciences, New York University, NY, 10012, USA}
\email{yaoli@math.umass.edu}

\author[Y. Yi]{Yingfei Yi}
%    Address of record for the research reported here
\address{Y. Yi: Department of Mathematical
\& Statistical Sci, University of Alberta, Edmonton, Alberta,
Canada T6G 2G1 and  School of Mathematics, Jilin University, Changchun
130012, PRC}
\email{yingfei@ualberta.ca,yi@math.gatech.edu}

\thanks {The research was partially supported by DMS1109201.  The
  second author is also partially supported by NSERC discovery grant
  1257749, a faculty development grant from University of Alberta, and
  a Scholarship from Jilin University. }

\subjclass[2000]{Primary 34F05, 60H10, 37H10, 92B05; Secondary 35B40, 35B41}

\keywords{Degeneracy, Complexity, Robustness, Biological networks,
Fokker-Planck Equations, Stochastic Differential Equations}

\begin{abstract} This paper is Part I of a two-part series devoting to the
study of systematic measures in a complex biological network modeled by a
system of ordinary differential equations.  As the mathematical
complement to our previous work \cite{li2012quantification} with
collaborators, the series aims at establishing a mathematical
foundation for characterizing three important  systematic measures:
degeneracy, complexity and robustness,  in such a biological network and
studying connections among them. To do so, we consider in Part I
stationary measures of a Fokker-Planck equation generated from small
white noise perturbations of a dissipative system of ordinary
differential equations. Some estimations of concentration of
stationary measures of the Fokker-Planck equation in the vicinity of
the global attractor are presented. Relationship between
differential entropy of stationary measures and dimension of the
global attractor is also given.

\end{abstract}

\maketitle

\section{Introduction}
The concept of modular biology has been proposed and extensively
investigated in the past several decades. In a complex
biological network,  modules in cells are created by interacting
molecules that function in a semi-autonomous fashion and they  are
functionally correlated. To better understand the interactions
between modules in a complex biological network, it is necessary to
quantitatively study systematic properties such as degeneracy,
robustness, complexity, redundancy, and evolvability.

Emerged from early studies of brain functions
\cite{edelman1978mindful}, notions of degeneracy and complexity were
first introduced in neural networks in \cite{tononi1994measure},
and the robustness was studied in \cite{kitano2007towards,
kitano2004biological} for systems with performance functions.
Roughly speaking, in a cellular network or a neural network degeneracy
measures the capacity of elements that are structurally different to
perform the same function,  structural complexity measures the
magnitude of functional integration and local segregation of
sub-systems, and
  the robustness
measures the capacity of performing similar function under
perturbation. These systematic measures are known to be closely
related. Indeed, it has already been observed via numerical
simulations for neural networks that high degeneracy not only yields
high robustness, but also it is accompanied by an increase in
structural complexity \cite{tononi1999measures}.

%{\color{blue} (In the above you have used nervous network, neural
%network, and neural system. They should be made consistent)}

As increasing biological phenomena were being observed, quantitative
studies of systematic measures in biological networks were also being
conducted. For instance,  numerical
simulations revealed connections between degeneracy and complexity in
artificial chemistry binding systems \cite{clark2011degeneracy};
 and also conclude that degeneracy underlies the
presence of long range correlation in complex networks
\cite{delignieres2011degeneracy, delignieres2013degeneracy}.
Features like regulation and robustness of biochemical networks of
signal transduction have also been studied quantitatively in
\cite{kitano2007towards, rizk2009general}. Degeneracy, complexity
and robustness were  quantified for neural networks by making use
of testing noise injections into the networks in
\cite{tononi1999measures}. However, it was later remarked in the
review article \cite{edelman2001degeneracy} that ``degeneracy is a
ubiquitous property of
 biological systems at all levels of organization, the concept has not yet been fully incorporated
 into biological thinking, $\dots$ because of the lack of a general evolutionary framework for the concept
 and the absence of a theoretical analysis".
Recently, quantification of degeneracy, complexity and robustness in
biological networks modeled by systems of ordinary differential
equations  was made  in the authors' joint work
\cite{li2012quantification} with Dwivedi, Huang and Kemp. The goal
of that study is precisely to extend the concept of degeneracy to an
evolutionary biological network and to establish its connections
with complexity and robustness.

The present work, consisting of two parts,  serves as the
mathematical complement of our previous work
\cite{li2012quantification},  aiming at establishing a mathematical
foundation of degeneracy, complexity and robustness in a complex
biological network modeled by a system of ordinary differential
equations.

This mathematical foundation is based on the theory of stochastic
differential equations. In particular, considering testing white
noise  perturbations to a  biological network is important in the
quantification of its systematic measures  because characterizations
of degeneracy and complexity rely on the functional connections
among modules of the network and such connections can be activated
by injecting external noises,  similarly  to the case of neural
systems \cite{tononi1999measures, li2012quantification}.

To be more precise, consider a biological network modeled by the following
system of ordinary differential equations (ODE system for short):
\begin{equation}
   \label{ODE1}
   x' = f(x),\;\;\;x\in \R^n,
\end{equation}
where $f$ is a $C^1$ vector field on $\R^n$, called {\em drift
field}. Under additive white noise perturbations $\sigma dW_{t}$, we
obtain the following system of stochastic differential equations
(SDE system for short):
\begin{equation}
   \label{SDE1}
   \mathrm{d}X = f(X) \mathrm{d}t + \epsilon \sigma(x) \mathrm{d}W_{t},\; \;\; X\in \R^n,
\end{equation}
where  $W_t$ is the standard $m$-dimensional Brownian
 motion,  $\epsilon$ is a small parameter lying in an interval $(0,\epsilon^*)$, and  $\sigma$,
 called an {\em noise matrix}, is
 an  $n\times m$ matrix-valued, bounded, $C^1$ function on $\R^n$  for some positive integer $m\ge n$,
 such that $\sigma(x)\sigma^{\top}(x)$ is everywhere
positive definite in $\mathbb R^n$.  We denote the collection of
such noise matrices by $\Sigma$. Under certain dissipation condition
(e.g., the existence of Lyapunov function corresponding to
\eqref{SDE1} assumed in this paper), the SDE system \eqref{SDE1}
generates a diffusion process in $\R^n$ which admits a transition
probability kernel $P^{t}(x, \cdot)$, $t\ge 0$, $x\in\R^n$,  such
that for each $x \in \mathbb{R}^{n}$, $P^{t}(x, \cdot)$ is a
probability measure and for each Borel set $B \subset
\mathbb{R}^{n}$, $P^{t}(x, B)$  measures the probability of the
stochastic orbit starting from $x$
 entering the set $B$ at  time $t$. An {\em invariant probability
measure of the diffusion process} associated with \eqref{SDE1} is
the left invariant of $P^{t}(x, \cdot)$ such that
\[
(\mu_{\epsilon} P^{t})(\cdot) = \int_{\R^n} P^{t}(x, \cdot)\mathrm{d}  \mu_\epsilon=\mu_\epsilon(\cdot),\qquad
t\ge 0 \,.
\]
An invariant probability measure associated with \eqref{SDE1} is
necessarily  a stationary measure of the Fokker-Planck
equation associated with \eqref{SDE1}.   In general, a
stationary measure can be regarded as a ``generalized invariant
measure" if the diffusion process fails to admit an invariant
measure.

% Conversely, under certain dissipation
%condition (see Section 2 for details), the Fokker-Planck equation
%corresponding to \eqref{SDE1} admits a unique stationary measure
%which is necessarily an invariant measure associated with
%\eqref{SDE1}.

 By injecting external
fluctuation $\epsilon\sigma dW_{t}$ into the network \eqref{ODE1}, the connections
among different modules of the network are activated.  Assuming the existence of a stationary measure
$\mu_\epsilon$ of the Fokker-Planck equation  associated with
\eqref{SDE1} for each $\epsilon\in (0,\epsilon^*)$, the mutual
information  among any two modules (coordinate subspaces) $X_1,X_2$ can be
defined using the margins $\mu_{1}$, $\mu_{2}$   of $\mu_{\epsilon}$
with respect to $X_{1}, X_{2}$, respectively. Such mutual
information can then be used to quantify degeneracy and complexity, and further to examine their connections with dynamical
quantities like robustness.

Such a mathematical foundation for degeneracy, complexity, and
robustness in a biological network
 relies on a
quantitative understanding of the
 stationary measures $\mu_{\epsilon}$ particularly with respect to their
 concentrations. This is in fact the main subject of
 this part of the series.

 A well-known approach to study the invariant probability measure is
 the classical large-deviation theory (or Freidlin-Wentzell
 theory). The probability that the trajectory of \eqref{SDE1} stays in the
 neighborhood of any absolutely continuous function can be calculated
 explicitly by Girsanov's theorem. This leads to some estimates of
 tails of stationary measures, or the first exit
 time of a stochastic orbit (see
 e.g. \cite{dembo2009large,FW, kifer1988random}). For instance, it is shown in
 \cite{FW} that for any set $P\subset
\mathbb{R}^{n}$ that does not intersect with any attractor of
\eqref{ODE1}, there exists a constant $V_{0}>0$  such that
\begin{equation}\label{QP}
   \lim_{\epsilon\rightarrow 0}\epsilon^{2}\log \mu_{\epsilon}(P) =
   -V_{0}.
\end{equation}
 In particular, the limit
\begin{equation}
\label{QPP}
  -\lim_{\epsilon\rightarrow 0}\epsilon^{2}\log
  \frac{\mathrm{d}\mu_{\epsilon}(x)}{\mathrm{d}x} := V(x)\,,
\end{equation}
if exists, is called the {\it quasi-potential function}.

One limitation of the large deviation theory is that usually it can only
estimate the probability of rare events, which corresponds to the
 tails of stationary measures.  In many applications, more
 refined analysis is based on the assumption that $\mu_{\epsilon}$ can
be approximated by a Gibbs measure, i.e., $\mu_{\epsilon} $ admits a
density function $u_{\epsilon}$ such that
\begin{equation}
  \label{wkb}
u_{\epsilon}(x) \approx \frac{1}{K}e^{-V(x)/\epsilon^{2}} \,,
\end{equation}
where $V(x)$ is the quasi-potential function \cite{risken1996fokker,
  grasman1999asymptotic, talkner1987mean, schuss2009theory}. However,
this assumption is difficult to verify in general as it requires high
regularity of the quasi-potential function. Rigorous results are only known for
some simple systems \cite{day1994regularity, day1985some, ludwig1975persistence}.

To understand connections among degeneracy, complexity, and
robustness, we need to measure the effects of stochastic perturbations
in \eqref{SDE1}  at the same order as $\epsilon$. To  make
such estimation rigorously, we will adopt the level set
method recently introduced in \cite{huang1,huang2} for stationary
probability measures of the Fokker-Planck equation associated with
\eqref{SDE1} (see Section ~\ref{background}).

In this part of the series, we will mainly apply the level set
method to obtain estimates on the concentrations of invariant
measures $\mu_\epsilon$ with respect to a fixed $\sigma\in\Sigma$.
Our main results of the paper lie in the following three categories.
\medskip

%Let $\mathcal{K}$ be the global attractor of equation \eqref{ODE1}, then
\begin{enumerate}
\item[a)] {\em Concentration in the vicinity of the global attractor $\cal A$}:
 We will show in  Theorem ~\ref{accurate} that  for any  $0<\delta\ll
 1$
there exists a constant $M>0$ such that
\begin{displaymath}
   \mu_{\epsilon}(\{x:\, \mathrm{dist}(x,\mathcal{A}) \leq M \epsilon
   \}) \geq 1-\delta.
\end{displaymath}
We will also show in Theorem ~\ref{levelalpha} that for any $\alpha
\in (0,1)$,
$$
  \lim_{\epsilon\rightarrow 0} \mu_{\epsilon}(\{x :\,
  \epsilon^{1+\alpha} \leq \mathrm{dist}(x, \mathcal{A}) \leq
  \epsilon^{1-\alpha}\}) = 1 \,.
$$

\item[b)] {\em Mean square displacement}:
 We will show in Theorem ~\ref{MSDbounds}  under certain conditions that there
 exist constants
$V_{1}, V_{2} > 0$ such that
\begin{displaymath}
   V_{1}\epsilon^{2}\leq V(\epsilon) \leq V_{2}\epsilon^{2},
\end{displaymath}
where
\begin{equation*}
   V(\epsilon) = \int_{\mathbb{R}^{n}}
   \mathrm{dist}^{2}(x,\mathcal{A}) d\mu_{\epsilon}(x).
\end{equation*}

\item[c)] {\em Entropy-dimension relationship}: We will show in Theorem
~\ref{EntDimThm} that if the global attractor $\cal A$ is regular,
then
\begin{equation*}
   \liminf_{\epsilon\rightarrow
     0}\frac{\mathcal{H}(\mu_{\epsilon})}{\log \epsilon} \geq n-d
\end{equation*}
where $\mathcal{H}(\mu_{\epsilon})$ is the differential entropy of
$\mu_\epsilon$ and  $d$ is the Minkowski dimension of $\cal A$.
\end{enumerate}
\medskip

The paper is organized as follows. Section ~\ref{background} is a
preliminary section in which we mainly review some results and the
level set method from \cite{huang1, huang2, huang5} on Fokker-Planck
equations.  Concentrations of stationary measures are
studied in Section ~\ref{concentration}. We derive the
entropy-dimension relationship in Section 4.

\section{Preliminary}\label{background}
In this section, we will review some notions and known
results about dissipative dynamical  systems and Fokker-Planck
equations including global attractors, Lyapunov functions, and the
existence and uniqueness of stationary measures. We will also recall
a Harnack inequality to be used later.

 \subsection{Dissipation and global attractor}

We note that the system \eqref{ODE1} generates a local flow on
$\R^n$, which we denote by $\phi^t$. For $B \subset \mathbb{R}^{n}$, we denote $\phi^{t}(B) =
\{\varphi^{t}(x) \ : \, \ x \in B\}$.  A set $A\subset\R$ is said to be
{\it invariant} with respect to \eqref{ODE1} or $\varphi^t$ if $\varphi^t$
extends to a flow on $A$ and  $\phi^{t}(A) \subset A$ for any  $t
\in\R$.

System \eqref{ODE1} or $\phi^t$ is said to be {\em dissipative}  if
$\varphi^t$, $t\ge 0$, is a positive semi-flow on $\R^n$ and  there
exists a compact subset $K$ of $\R^n$ with the property that for any
$\xi\in \R^n$ there exists a $t_0(\xi)>0$ such that
$\varphi^t(\xi)\in K$ as $t\ge t_0(\xi)$. It is well-known that if
$\varphi^t$ is dissipative, then it must admit a {\em global attractor}
$\mathcal A$, i.e., $\mathcal{A}$  is a compact subset of
$\mathbb{R}^{n}$ which attracts any bounded set in $\mathbb{R}^{n}$
in the sense that $\lim_{t \rightarrow +\infty}
\mathrm{dist}(\varphi^{t}(K), \mathcal{A}) = 0$ for every bounded
set $K \subset \mathbb{R}^{n}$, where $\mathrm{dist}(A,B)$ denote
the Hausdorff semi-distance from a bounded set $A$ to a bounded  set
$B$ in $\mathbb R^n$. The global attractor $\mathcal A$ of $\varphi^t$,
if exists,  must be unique and invariant with respect to  $\varphi^t$.
In fact, $\varphi^t$ is dissipative if and only if it is a semi-flow admiting a
global attractor. Moreover, dissipation of $\varphi^t$ can be
guaranteed by the existence of a {\em Lyapunov function} $U$ of
\eqref{ODE1}, i.e., $U\in C^1(\R^n)$ is a non-negative function such
that $U(x)<\sup_{x\in\R^n} U(x)$, $x\in \R^n$,  and there exist a
compact set $K\subset\R^n$ and a constant $\gamma>0$, called a {\em
Lyapunov constant}, such that
$$
  f(x) \cdot \nabla U(x) \leq - \gamma, \;\qquad x \in
  \R^n\setminus K.
$$

The global attractor $\mathcal{A}$ of \eqref{ODE1} is said to be a
{\it strong attractor} if there is a connected open neighborhood
$\mathcal{N}$ of $\cal A$ with $C^{2}$ boundary, called {\it
isolating neighborhood}, such that i) $\omega( \mathcal{N}) =
\mathcal{A}$ and ii) $f(x) \cdot \nu(x) < 0$ for each $x \in
\partial \mathcal{N}$, where $\nu(x)$ is the  outward normal vector
of $ \mathcal{N}$ at $x$ and $\omega(B) := \bigcap_{\tau \ge 0
 }
\overline{\{ \varphi^{t}(B)\ : \, \ t\geq
  \tau \}}$ is the {\it $\omega$-limit set} of a bounded set $B\subset
  \mathbb{R}^{n}$. It is clear that $\cal A$ is a strong
attractor of \eqref{ODE1}  if there exists a {\em strong Lyapunov
function}  in a connected  open set $S \subseteq \R^n$ containing
$\cal A$, i.e.,
  $\nabla U(x)\ne 0$, $x\in  S\setminus \cal A$, and there is a constant $\gamma_{0}>0$  such that
$$
  f(x) \cdot \nabla U(x) \leq - \gamma_{0} |\nabla U(x)|^{2}, \;\qquad x \in
  S\setminus \cal A.
$$
 We again refer the constant $\gamma_0$ above to as the {\it
Lyapunov constant} of $U$.

\medskip

\subsection{Fokker-Planck equation and stationary measures}
%Consider equation\eqref{SDE1} defined on $\mathbb{R}^{n}$.
If the transition probability kernel $P^{t}(x, \cdot)$ of the SDE
system \eqref{SDE1} admits a probability density function $p^{t}(x, z)$,
 i.e.,
$$
  P^{t}(x, B)= \int_{B} p^{t}(x,  z) \mathrm{d}z
$$
for any Borel set $B \subset \mathbb{R}^{n}$, then for any measurable, non-negative function $\xi(x)$ with
$\int_{\mathbb{R}^{n}} \xi(x) \mathrm{d}x = 1$, $ u_{\epsilon}(x,t) =
\int_{\mathbb{R}^{n}} p^{t}(z,  x) \xi(z) \mathrm{d}z $
characterizes the time evolution of the probability density function.
Formally, $u_{ \epsilon}(x,t)$ satisfies the following Fokker-Planck equation
(FPE for short) :
\begin{equation}
   \label{FPE1}
  \left\{\begin{array}{l} \displaystyle{ \frac{\partial
            u_{\epsilon}(x,t)}{\partial t}=\frac{1}{2}{{\epsilon
}^{2}}\sum\limits_{i,j=1}^{n}{\partial_{ij}(a_{ij}(x) u_{\epsilon}(x,t) )-
\sum_{i=1}^{n}\partial_{i}(f(x) u_{\epsilon}(x,t)
   )} := L_{\epsilon}u_{\epsilon}(x,t),}\\
  \displaystyle{ \int_{\R^n} u_{\epsilon}(x,t) \rm dx=1 },\end{array}\right.
\end{equation}
where $(a_{ij}(x)) := A(x) := \sigma (x){{\sigma }^{\top}}(x)$ is an
$n\times n$, everywhere positive definite, matrix-valued $C^1$
function, called {\em diffusion matrix}. The operator
$L_{\epsilon}$ is called {\em Fokker-Planck operator}.

Among solutions of the Fokker-Planck equation, of particular
importance are the {\em stationary solutions}.  For any
connected open subset $S \subset \mathbb{R}^{n}$, stationary
solutions on $S$ satisfy the stationary Fokker-Planck equation
\begin{equation}
   \label{ssFPE}
   L_{\epsilon} u_{\epsilon}(x,t) = 0\,,\quad \int_{S}  u_{\epsilon}(x,t) \mathrm{d}x = 1\,,\qquad
   u
   \geq 0 \,.
\end{equation}

More generally, on any connected open subset $S \subset
\mathbb{R}^{n}$, a {\em
  stationary measure} of the Fokker-Planck
equation is a probability measure $\mu_{\epsilon}$ satisfying
\begin{displaymath}
   \int_{S}\mathcal{L}_{\epsilon}h(x) \mathrm{d}\mu_{\epsilon} = 0 \,,\quad
   \forall h(x) \in C_{0}^{\infty}( S) \,,
\end{displaymath}
where
\[
   \mathcal{L}_{\epsilon} =
   \frac{1}{2}\epsilon^{2}\sum_{i,j=1}^{n}a_{ij}(x)\partial_{ij} +
   \sum_{i=1}^{n} f_{i}(x)\partial_{i}
\]
is the {\it adjoint Fokker-Planck
  operator}.

 If $u_\epsilon$ is a stationary solution of the
Fokker-Planck equation \eqref{FPE1}, then $u_\epsilon {\rm d} x$ is
clearly a stationary measure. Conversely, it follows from the
regularity theory of Fokker-Planck equation
\cite{bogachev2001regularity} and standard regularity theory of
elliptic equation that a stationary measure of Fokker-Planck
equation \eqref{FPE1}  must admit a density function which is a
strictly positive, classical stationary solution of the
Fokker-Planck equation. Note that a classical solution
  means a solution that has enough regularity to be plugged into the
  original differential equation.

An invariant measure of the diffusion process generated by
\eqref{SDE1}, or equivalently, of the transition probability kernel
$P^{t}$, is necessarily a stationary measure of the corresponding
Fokker-Planck  equation \eqref{FPE1}. The converse needs not be true
in general. However, stationary measures considered in this paper
are in fact invariant measures of the diffusion process generated by
\eqref{SDE1}.

%, and vice versa if the stationary measure is unique.

The existence and estimates of stationary measures of \eqref{FPE1}
are related to Lyapunov-like functions associated with it.  For the
sake of generality, we consider a connected open set  $S \subseteq
\mathbb{R}^{n}$. A non-negative function $U\in C(S)$ is said to be a
{\em compact function} if (i) $U(x)< \rho_M$, $x\in S$; and (ii)
$\lim_{x \rightarrow
\partial S} U(x)=\rho_M$, where $\rho_{M} =
\sup_{x \in S} U(x)$ is called the {\em  essential upper bound of
$U$}. In the case $S=\R^n$, $x \rightarrow
\partial S$ simply means that $x\to\infty$. It is obvious that
Lyapunov and strong Lyapunov functions defined in the previous
subsection are all compact functions on $\R^n$.

For a compact function defined on $S$ and for
each $\rho\in  [0,\rho_M)$, we  denote $\Omega_\rho(U) = \{x\in S: U(x)<\rho\}$
 as the {\it$\rho$-sublevel set} of $U$ and  $\Gamma_\rho(U) = \{x\in
 S: U(x)=\rho\}$ as the  {\it $\rho$-level set} of $U$.

Let $U$ be a compact
$C^{2}$ function on a connected open set $S \subset \mathbb{R}^{n}$
 with upper bound $\rho_M$. For a fixed $\epsilon
 \in (0,\epsilon^*)$, $U$ is
 called  a {\em Lyapunov function associated with \eqref{FPE1}} (on $S$)  if there are constants
$\rho_m,\gamma>0$, referred to as  {\em an essential lower
bound, the Lyapunov constant of $U$}, respectively,
  such that
\[
 \mathcal{L}_{\epsilon} U(x) < -\gamma, \;\qquad x\in S\setminus
  \Omega_{\rho_m}(U).
\]
 $U$  is called a {\it weak Lyapunov function} (on $S$)
associated with equation \eqref{FPE1} if there is a constant
$\rho_{m}$, still referred to as an essential lower bound of $U$,
such that
\[
  \mathcal{L}_{\epsilon} U(x) \leq 0, \;\qquad x\in S\setminus
  \Omega_{\rho_m}(U) \,.
\]
%We note that if $\mathcal{L}^{*}_{0}U(x) < - \gamma$ (resp. $
%\mathcal{L}^{*}_{0}U(x) \leq 0$), then $U$ becomes a Lyapunov
%function  (resp.  weak Lyapunov function) of the ODE system
%\eqref{ODE1} in the usual sense.

If $U(x)$ is a Lyapunov function (resp. weak Lyapunov function )
associated with \eqref{FPE1} for each $\epsilon \in (0, \epsilon^*)$
and the essential lower bound and Lyapunov constant are independent
of $\epsilon$, then $U(x)$ is called a {\it uniform Lyapunov
function} (resp. {\it uniform weak Lyapunov function}) associated
with the family \eqref{FPE1} on $(0, \epsilon^*)$.

\medskip

It is easy to see that a uniform Lyapunov function associated with
the family \eqref{FPE1}  on $(0, \epsilon^*)$ must be a Lyapunov
function for the ODE system \eqref{ODE1}.  Consequently, if the
family \eqref{FPE1} on $(0, \epsilon^*)$ admits a uniform Lyapunov
function, then the ODE system \eqref{ODE1} must be dissipative and
hence admits a global attractor.
\medskip

There has been extensive studies on the  existence and uniqueness of
stationary measures of Fokker-Planck equation \eqref{FPE1} (see
 \cite{bogachev2009elliptic, bogachev2012positive, huang2} and
references therein). While stationary measures of a Fokker-Planck
equation in a bounded domain of $\R^n$ always exist, the existence
of such in the entire space (i.e. $S = \mathbb{R}^{n}$) necessarily
require certain dissipation conditions at infinity which is more or
less equivalent to the existence of a Lyapunov function.

\medskip

The following theorem follows from the main result of
\cite{bogachev2012positive, huang2} and the standard regularity theory
of elliptic equations.

\medskip

\begin{thm}\label{exist1}  If the family ${\cal L}_\epsilon$, $\epsilon\in (0,\epsilon^*)$,  admits
 an unbounded  uniform Lyapunov function, then for each $\epsilon\in (0,\epsilon^*)$, the corresponding
  Fokker-Planck
    equation \eqref{FPE1} has a unique
    stationary measure $\mu_\epsilon$ on $\mathbb{R}^{n}$. Moreover,
    ${\rm d}\mu_\epsilon(x)=u_\epsilon(x){\rm d}x$  for a classical stationary
    solution $u_{\epsilon}$ of \eqref{FPE1}.
\end{thm}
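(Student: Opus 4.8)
The plan is to verify the hypotheses of the existence results in \cite{bogachev2012positive, huang2} and then invoke elliptic regularity; the whole argument hinges on the unboundedness of the uniform Lyapunov function $U$, which supplies the dissipation at infinity needed to work on all of $\mathbb{R}^{n}$ rather than on a bounded domain.

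For existence, I would fix $\epsilon \in (0,\epsilon^*)$ and read the defining inequality $\mathcal{L}_\epsilon U(x) < -\gamma$ on $\mathbb{R}^{n}\setminus \Omega_{\rho_m}(U)$ as a Khasminskii-type drift condition for $\mathcal{L}_\epsilon$. Because $U$ is compact and unbounded, its sublevel sets $\Omega_\rho(U)$ are relatively compact and exhaust $\mathbb{R}^{n}$ as $\rho \to \rho_M = \infty$. I would solve the stationary equation on the bounded domains $\Omega_\rho(U)$, where solutions always exist, and use the Lyapunov bound to obtain a uniform lower bound for the mass on a fixed compact core together with tightness of the normalized solutions; letting $\rho \to \infty$ should produce a stationary measure on $\mathbb{R}^{n}$. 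This is precisely the situation covered by \cite{bogachev2012positive, huang2}. Probabilistically, the same drift condition guarantees non-explosion of the diffusion \eqref{SDE1} and tightness of the time-averaged transition kernels, so the Krylov-Bogolyubov construction yields an invariant measure, which is necessarily a stationary measure of \eqref{FPE1}.

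For regularity and positivity, I would use that $A(x) = \sigma(x)\sigma^{\top}(x)$ is everywhere positive definite, so $\mathcal{L}_\epsilon$ is uniformly elliptic on every compact set; together with $A, f \in C^1$, the regularity theory for stationary Fokker-Planck equations \cite{bogachev2001regularity} gives that any stationary measure is absolutely continuous with density $u_\epsilon \in W^{1,p}_{\mathrm{loc}}(\mathbb{R}^{n})$ for every $p$. A standard elliptic bootstrap then upgrades $u_\epsilon$ to a classical ($C^2$) solution of \eqref{FPE1}, and the Harnack inequality recalled below, applied to the nonnegative solution $u_\epsilon$, forces $u_\epsilon > 0$ everywhere.

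For uniqueness, the crucial point is again non-degeneracy: positive definiteness of $A$ makes the diffusion \eqref{SDE1} both strong Feller and irreducible, so combined with the existence of an invariant measure the standard Doob-Khasminskii dichotomy forces that measure to be unique; equivalently one invokes the uniqueness theorem for stationary solutions of non-degenerate Fokker-Planck equations possessing a Lyapunov function \cite{bogachev2012positive}. The hard part will be the existence step, where one must rule out escape of mass to infinity --- exactly the role of the \emph{unboundedness} of $U$, which is what separates the full-space result from the elementary existence on bounded domains.
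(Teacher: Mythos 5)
Your proposal follows the same route as the paper: the paper itself gives no detailed argument for Theorem~\ref{exist1}, stating only that it follows from the main results of the cited works of Bogachev--R\"ockner--Shaposhnikov and Huang--Ji--Liu--Yi together with standard elliptic regularity, which is precisely the strategy you outline (Lyapunov/drift condition for existence and uniqueness on $\mathbb{R}^{n}$, then regularity theory, bootstrap, and Harnack for the positive classical density). Your sketch is a reasonable and correct expansion of what those citations supply.
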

\medskip

\subsection{Level set method and measure estimates} The following two theorems are the main ingredient of
the level set method introduced in \cite{huang1}.

\begin{thm} {\rm (Integral identity, Theorem 2.1, \cite{huang1})} \label{lem23}
For a given $\epsilon\in (0,\epsilon^*)$,
let $u=u_\epsilon$ be a stationary solution of  \eqref{FPE1}. Then
for any Lipschitz domain $S \subset\R^n$ and a function $F \in
C^{2}(S)$ having constant value on $\partial S$,
\begin{eqnarray}
   \label{const-bd}
  \int_{S} ({\cal L}_\epsilon F(x))
  u(x)\mathrm{d}x&=&\int_{S}  \left (\sum_{i,j = 1}^{n} \frac{1}{2}\epsilon^{2}a_{ij}(x)\partial^{2}_{ij}F(x) +
   \sum_{i=1}^{n}f_{i}(x)\partial_{i}
   F(x)\right )u(x)\mathrm{d}x \\\nonumber
&=& \int_{\partial
     S} \left (\sum_{i=1}^{n}\sum_{j=1}^{n} \frac{1}{2}\epsilon^{2}a_{ij}(x)\partial_{i} F(x)
   \nu_{j}\right )u(x) \mathrm{d}s
\end{eqnarray}
where $\{\nu_{j}\}_{j=1}^{n}$ denotes the unit outward normal
vectors.
\end{thm}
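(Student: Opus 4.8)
The plan is to establish the nontrivial second equality by integration by parts, moving the derivatives in $\mathcal{L}_\epsilon F$ off $F$ and onto $u$ together with the coefficients, and then using the stationarity $L_\epsilon u=0$ to annihilate the interior contribution. The first equality is nothing but the definition of $\mathcal{L}_\epsilon$, so it requires no argument. First I would recast the stationary equation in divergence (flux) form: writing
\[
   J_i(x) := f_i(x)u(x) - \tfrac12\epsilon^2\sum_{j=1}^n \partial_j\big(a_{ij}(x)u(x)\big), \qquad J:=(J_1,\dots,J_n),
\]
the symmetry $a_{ij}=a_{ji}$ gives $\nabla\cdot J = -L_\epsilon u$. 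Hence the hypothesis that $u$ is a classical stationary solution of \eqref{FPE1} is precisely the statement that the flux $J$ is divergence free, $\nabla\cdot J\equiv 0$ on $S$.

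Next I would integrate the second-order part of $\int_S(\mathcal{L}_\epsilon F)u\,\mathrm{d}x$ by parts once in $x_i$, producing the boundary term $\tfrac12\epsilon^2\int_{\partial S}\sum_{i,j}a_{ij}u(\partial_j F)\nu_i\,\mathrm{d}s$ and an interior term $-\tfrac12\epsilon^2\int_S\sum_{i,j}\partial_i(a_{ij}u)(\partial_j F)\,\mathrm{d}x$. Combining this interior term with the first-order term $\int_S\sum_j f_j u(\partial_j F)\,\mathrm{d}x$ (after relabeling the summation index) collapses the two into $\int_S J\cdot\nabla F\,\mathrm{d}x$. A second integration by parts then yields
\[
   \int_S J\cdot\nabla F\,\mathrm{d}x = \int_{\partial S}(J\cdot\nu)\,F\,\mathrm{d}s - \int_S(\nabla\cdot J)\,F\,\mathrm{d}x,
\]
whose last integral vanishes by stationarity. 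Since $F$ is constant, say $F\equiv c$, on $\partial S$, the surviving boundary integral equals $c\int_{\partial S}J\cdot\nu\,\mathrm{d}s = c\int_S\nabla\cdot J\,\mathrm{d}x = 0$ by the divergence theorem. What remains is exactly $\tfrac12\epsilon^2\int_{\partial S}\sum_{i,j}a_{ij}u(\partial_j F)\nu_i\,\mathrm{d}s$, and relabeling $i\leftrightarrow j$ together with $a_{ij}=a_{ji}$ rewrites it in the asserted form $\int_{\partial S}\big(\sum_{i,j}\tfrac12\epsilon^2 a_{ij}(\partial_i F)\nu_j\big)u\,\mathrm{d}s$.

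The main obstacle I anticipate is analytic rather than algebraic: justifying the two integrations by parts, i.e.\ the divergence theorem, on a merely Lipschitz domain $S$, where the outward normal $\nu$ is defined only almost everywhere on $\partial S$. This is legitimate because $u$ is a classical stationary solution---regular enough, by the regularity theory underlying Theorem~\ref{exist1}, for the manipulations below---while $F\in C^2(S)$ and $a_{ij},f_i\in C^1$, so every integrand is continuous up to $\partial S$ and the Gauss--Green formula on Lipschitz domains applies. A secondary point is the careful bookkeeping of the two families of boundary terms: the genuinely surviving one carrying $\partial_j F$, versus the terms proportional to $F$ itself, which cancel only after invoking \emph{both} the constancy of $F$ on $\partial S$ and the flux conservation $\nabla\cdot J=0$.
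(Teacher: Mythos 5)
Your argument is correct. Note first that the paper itself supplies no proof of this statement: it is imported verbatim as Theorem~2.1 of \cite{huang1}, so there is no in-paper argument to compare against. Your derivation --- recasting the stationary equation as $\nabla\cdot J=0$ for the probability flux $J_i=f_iu-\tfrac12\epsilon^2\sum_j\partial_j(a_{ij}u)$, integrating the second-order term by parts once to split off the boundary term carrying $\partial_jF$, recognizing the remaining interior integrand as $J\cdot\nabla F$, and then integrating by parts a second time so that the interior piece dies by stationarity and the boundary piece $c\int_{\partial S}J\cdot\nu\,\mathrm{d}s$ dies by the divergence theorem applied to the divergence-free flux --- is exactly the standard proof of this identity, and the bookkeeping (where the symmetry $a_{ij}=a_{ji}$ is actually invoked, which boundary term survives, and why constancy of $F$ on $\partial S$ is indispensable) is all in order. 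The one place requiring slightly more care than you state is the second integration by parts: it treats $J$ as a $C^1$ vector field up to $\overline{S}$, which needs $a_{ij}u\in C^2$; this is covered by the classical-solution hypothesis together with the regularity theory the paper invokes from \cite{bogachev2001regularity}, but is worth making explicit since the noise matrix is only assumed $C^1$ in the standing hypotheses. This does not affect the validity of the argument.
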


 In applying Theorem ~\ref{lem23} to study stationary
solutions of a Fokker-Planck equation, one typically considers $F$
as a Lyapunov function $U(x)$ and $S$ as a sublevel set
$\Omega_{\rho}(U) = \{x\in \R^n:  U(x) < \rho \}$. When $\nabla
U(x)\ne 0$ on the level set $\Gamma_\rho(U) = \{x\in \R^n: U(x) =
\rho \}$, we note that  $\partial \Omega_\rho(U)=\Gamma_\rho(U)$.
\medskip

\begin{thm} {\rm (Derivative formula, Theorem 2.2, \cite{huang1})}\label{deri}
Let $\mu$ be a Borel probability measure with density function $u\in
C(\R^n)$ and $U$ be a $C^1$ compact function on $\R^n$ such that
$\nabla U(x)\ne 0$, $x\in \Gamma_\rho(U)$ for all $\rho$ lying in an
interval $(\rho_1,\rho_2)$. Then
$$
\frac{\partial }{\partial\rho} \int_{\Omega_\rho(U)} u(x)~{\rm d} x=
\int_{\Gamma_{\rho}(U)} \frac{u(x)}{|\nabla U(x)|} ~{\rm d} s,\qquad
\rho\in (\rho_1,\rho_2).
$$
\end{thm}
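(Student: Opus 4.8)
The plan is to recognize the identity as a manifestation of the coarea formula applied to the level sets of $U$. Write $G(\rho) = \int_{\Omega_\rho(U)} u(x)\,\mathrm{d}x$ and $h(\rho) = \int_{\Gamma_\rho(U)} \frac{u(x)}{|\nabla U(x)|}\,\mathrm{d}s$, so that the goal is to show $G'(\rho) = h(\rho)$ for every $\rho \in (\rho_1,\rho_2)$. First I would note that the hypothesis $\nabla U \ne 0$ on each $\Gamma_\rho$, $\rho\in(\rho_1,\rho_2)$, forces $|\nabla U| > 0$ throughout the open slab $\mathcal S := \{x : \rho_1 < U(x) < \rho_2\}$, since any $x$ in the slab lies on the level set $\Gamma_{U(x)}(U)$ with $U(x) \in (\rho_1,\rho_2)$. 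By the implicit function theorem each $\Gamma_\rho$ is then a $C^1$ hypersurface, and since $U$ is a compact function its sublevel sets are bounded, so each $\Gamma_\rho$ is compact with finite surface measure.

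The core step is to integrate over a sub-slab. For $\rho_1 < a < b < \rho_2$, the coarea formula (valid for the $C^1$, hence locally Lipschitz, function $U$) applied to the integrand $u/|\nabla U|$ on $\{a < U < b\} \subset \mathcal S$ gives
\begin{align*}
\int_{\{a < U < b\}} u(x)\,\mathrm{d}x &= \int_{\{a<U<b\}} \frac{u(x)}{|\nabla U(x)|}\,|\nabla U(x)|\,\mathrm{d}x \\
&= \int_a^b \left(\int_{\Gamma_\rho(U)} \frac{u(x)}{|\nabla U(x)|}\,\mathrm{d}s\right)\mathrm{d}\rho = \int_a^b h(\rho)\,\mathrm{d}\rho.
\end{align*}
Because $\{U = a\}$ is a regular level set of a $C^1$ function with non-vanishing gradient, it has Lebesgue measure zero, so $\int_{\{a<U<b\}} u\,\mathrm{d}x = G(b) - G(a)$. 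Combining, $G(b) - G(a) = \int_a^b h(\rho)\,\mathrm{d}\rho$ for all $\rho_1 < a < b < \rho_2$.

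Finally I would pass from this integrated identity to the pointwise derivative. It suffices to show that $h$ is continuous on $(\rho_1,\rho_2)$: the fundamental theorem of calculus then yields $G'(\rho) = h(\rho)$ at every $\rho$. Continuity of $h$ I would obtain by flattening: near a point of $\Gamma_{\rho_0}$ use the implicit function theorem to write the level sets as graphs $x_n = \psi(x',\rho)$ over a fixed domain with $\psi$ jointly $C^1$ in $(x',\rho)$, express the surface integral as an integral of $\frac{u}{|\nabla U|}$ times the graph area element $\sqrt{1 + |\nabla_{x'}\psi|^2}$ against $\mathrm{d}x'$, patch with a partition of unity subordinate to a finite cover of the compact set $\Gamma_{\rho_0}$, and invoke continuity of $u$, of $|\nabla U|$, and of the resulting integrands in $\rho$ together with dominated convergence.

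The main obstacle is this last step, namely upgrading the almost-everywhere identity $G' = h$ coming from the coarea formula to an everywhere statement. This requires the continuity of the surface integral $h(\rho)$ as $\rho$ varies, whose proof hinges on the $C^1$ dependence of the level hypersurfaces on the parameter $\rho$ and hence on a careful local graph representation; by contrast, the coarea formula itself and the measure-zero property of $\{U = a\}$ are standard once $|\nabla U| > 0$ has been secured on the slab.
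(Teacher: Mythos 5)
Your proof is correct. The paper states Theorem~\ref{deri} without proof, quoting it from \cite{huang1}, and the argument in that reference is likewise based on the coarea formula on the slab $\{\rho_1<U<\rho_2\}$ followed by continuity of the level-set integral to upgrade the integrated identity to a pointwise derivative, so your approach is essentially the same as the source's.
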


\medskip

Let $\mu_{\epsilon}$ be a stationary measures of the Fokker-Planck
equation \eqref{FPE1}. Then as shown in \cite{huang1, huang2}, Theorems
~\ref{lem23},~\ref{deri} yield the following estimates concerning
$\mu_\epsilon$ in the presence of a Lyapunov function.

\medskip

\begin{lem} {\rm (Theorem A (b),  \cite{huang1})}
\label{lem41} Assume that \eqref{FPE1}  admits a
Lyapunov function $U$ with essential lower, upper bound $\rho_{m}$,
$\rho_M$, respectively,  that satisfies $\nabla U(x) \neq 0$,  $x
\in \Gamma_{\rho}$ for almost every $\rho\in [\rho_m,\rho_M)$. Then
for any function $H(\rho)\in L^{1}_{loc}([\rho_{m},\rho_{M}))$ with
\begin{displaymath}
   H(\rho)\geq
   \frac{1}{2}\epsilon^{2}\sum_{i,j=1}^{n}a_{ij}(x)\partial_{x_{i}}U(x)\partial_{x_{j}}U(x),\quad
   x \in \Gamma_{\rho},
\end{displaymath}
one has
\begin{displaymath}
   \mu_{\epsilon}(\Omega_{\rho_{M}}(U)\setminus \Omega_{\rho}(U)) \leq
   e^{-\gamma\int_{\rho_{m}}^{\rho}\frac{1}{H(t)}dt},\qquad \rho\in
   [\rho_m,\rho_M),
\end{displaymath}
where $\gamma >0$ is the Lyapunov constant of $U$.
\end{lem}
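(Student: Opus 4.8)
The plan is to reduce the measure estimate to a first-order differential inequality for the distribution function $G(\rho):=\mu_\epsilon(\Omega_\rho(U))=\int_{\Omega_\rho(U)}u_\epsilon\,\mathrm dx$, where $u_\epsilon$ is the classical stationary density furnished by Theorem~\ref{exist1}. Since $U<\rho_M$ throughout $\R^n$, we have $\Omega_{\rho_M}(U)=\R^n$, hence $\mu_\epsilon(\Omega_{\rho_M}(U)\setminus\Omega_\rho(U))=1-G(\rho)$, and the goal becomes the bound $1-G(\rho)\le e^{-\gamma\int_{\rho_m}^{\rho}\mathrm dt/H(t)}$. Throughout I restrict to the full-measure set of regular values $\rho$ for which $\nabla U\neq0$ on $\Gamma_\rho(U)$, so that $\partial\Omega_\rho(U)=\Gamma_\rho(U)$ is a $C^1$ hypersurface with outward normal $\nu=\nabla U/|\nabla U|$.

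First I would compute the stationary ``flux'' across a level set. Applying the integral identity (Theorem~\ref{lem23}) with $F=U$ and $S=\Omega_\rho(U)$ --- legitimate because $U\equiv\rho$ is constant on $\Gamma_\rho(U)$ --- and substituting $\nu_j=\partial_jU/|\nabla U|$ yields
\begin{equation*}
\Phi(\rho):=\int_{\Omega_\rho(U)}\big(\mathcal L_\epsilon U\big)u_\epsilon\,\mathrm dx
=\int_{\Gamma_\rho(U)}\frac{\tfrac12\epsilon^2\sum_{i,j}a_{ij}\partial_iU\,\partial_jU}{|\nabla U|}\,u_\epsilon\,\mathrm ds\ \ge 0 .
\end{equation*}
Because the numerator $b(x):=\tfrac12\epsilon^2\sum_{i,j}a_{ij}\partial_iU\partial_jU$ satisfies $b\le H(\rho)$ on $\Gamma_\rho(U)$, the derivative formula (Theorem~\ref{deri}), which identifies $G'(\rho)=\int_{\Gamma_\rho(U)}u_\epsilon/|\nabla U|\,\mathrm ds$, gives the upper estimate $\Phi(\rho)\le H(\rho)G'(\rho)$.

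Next I would extract a matching lower bound on $\Phi$ from the Lyapunov condition. Subtracting the identity at two regular values $\rho<\rho'$ gives $\Phi(\rho')-\Phi(\rho)=\int_{\Omega_{\rho'}(U)\setminus\Omega_\rho(U)}(\mathcal L_\epsilon U)u_\epsilon\,\mathrm dx$, and since $\mathcal L_\epsilon U<-\gamma$ on $\R^n\setminus\Omega_{\rho_m}(U)\supset\Omega_{\rho'}(U)\setminus\Omega_\rho(U)$ this shows $\Phi(\rho')-\Phi(\rho)\le-\gamma\big(G(\rho')-G(\rho)\big)$; equivalently $\Psi:=\Phi+\gamma G$ is non-increasing on $[\rho_m,\rho_M)$. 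Letting $\rho'\uparrow\rho_M$, continuity of measure gives $G(\rho')\to\mu_\epsilon(\R^n)=1$, while $\Psi$ being monotone and bounded below by $0$ forces $\lim\Phi(\rho')$ to exist and be $\ge0$. Hence $\Psi(\rho)\ge\lim_{\rho'\uparrow\rho_M}\Psi(\rho')\ge\gamma$, i.e. $\Phi(\rho)\ge\gamma\big(1-G(\rho)\big)$. Note this step needs only the nonnegativity of the limiting flux, not its vanishing, which is what keeps the argument clean.

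Combining the two estimates gives $H(\rho)G'(\rho)\ge\Phi(\rho)\ge\gamma\big(1-G(\rho)\big)$, so $m(\rho):=1-G(\rho)$ obeys $m'(\rho)\le-\tfrac{\gamma}{H(\rho)}m(\rho)$; integrating this (Gronwall) from $\rho_m$ with $m(\rho_m)\le1$ produces the claimed exponential bound. The step I expect to be the main obstacle is the regularity bookkeeping around the null set of critical values: Theorems~\ref{lem23} and~\ref{deri} are only available for a.e.\ $\rho$, so a priori $G$ is merely monotone, and one must argue that $G$ is absolutely continuous --- so that the a.e.\ pointwise inequality $m'\le-\tfrac{\gamma}{H}m$ integrates to the stated bound --- and that the exceptional values of $\rho$ do not obstruct the subtraction argument. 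I would handle this by using the derivative/coarea formula to control $G$ on intervals of regular values and passing to the whole interval by monotonicity together with continuity of the measure.
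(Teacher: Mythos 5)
This lemma is quoted from \cite{huang1} and the paper gives no proof of its own, but your argument is a correct reconstruction along exactly the intended lines: the level-set method built from the integral identity (Theorem~\ref{lem23}) and the derivative formula (Theorem~\ref{deri}), reducing to a Gronwall inequality for $1-\mu_\epsilon(\Omega_\rho(U))$, with the Lyapunov condition supplying the lower bound on the boundary flux via the monotone quantity $\Phi+\gamma G$. The absolute-continuity bookkeeping you flag is the only real technical point, and your proposed handling (monotonicity of $G$ plus the coarea/derivative formula on regular values, noting that any singular part of the non-increasing function $1-G$ only strengthens the inequality) suffices.
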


\medskip

\begin{lem} {\rm (Theorem A (c), \cite{huang1})}
\label{lem33} Assume that \eqref{FPE1} admits a weak
Lyapunov function $U$ in a connected open set $S\subseteq
\mathbb{R}^{n}$ with essential lower, upper bound $\rho_m,\rho_M$,
respectively. Also assume that $(a_{ij})$ is everywhere positive
definite in $S$. Then for any two positive continuous functions
$H_{1}(\rho), H_{2}(\rho)$ satisfying
$$
  H_{1}(\rho)\leq \frac{1}{2}\epsilon^{2}\sum_{i,j = 1}^{n}a_{ij}(x)\partial_{i}
  U(x)\partial_{j} U(x) \leq H_{2}(\rho),\quad x \in
  \Gamma_{\rho}(U),
$$
one has
$$
  \mu_{\epsilon}(\Omega_{\rho_{M}}(U) \setminus \Omega_{\rho_{m}}(U))
 \leq \mu_{\epsilon}(\Omega_{\rho}(U) \setminus
 \Omega_{\rho_{m}}(U))e^{\int_{\rho}^{\rho_{M}} \frac
 1{\tilde{H}(s)}
   \mathrm{d}s},\qquad \; \rho\in (\rho_m,\rho_M),$$
where $\tilde{H}(\rho) =
H_{1}(\rho)\int_{\rho_{m}}^{\rho}H^{-1}_{2}(s) \mathrm{d}s$.
\end{lem}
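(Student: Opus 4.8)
The plan is to convert the two-sided bound on the quantity $a(x):=\frac{1}{2}\epsilon^{2}\sum_{i,j}a_{ij}(x)\partial_{i}U(x)\partial_{j}U(x)$ into a Gronwall-type differential inequality for the tail function $G(\rho):=\mu_{\epsilon}(\Omega_{\rho}(U)\setminus\Omega_{\rho_m}(U))$ and then integrate. First I would record a regularity observation that is \emph{not} assumed in the statement but is forced by the hypotheses: since $0<H_{1}(\rho)\le a(x)$ on $\Gamma_{\rho}(U)$ and $(a_{ij})$ is positive definite on $S$, one has $a(x)\le\lambda_{\max}(A)|\nabla U|^{2}$ up to the factor $\tfrac12\epsilon^2$, so $\nabla U(x)\ne 0$ on every level set $\Gamma_{\rho}(U)$ with $\rho\in(\rho_m,\rho_M)$. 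Hence $\partial\Omega_{\rho}(U)=\Gamma_{\rho}(U)$ is a $C^{1}$ hypersurface with outward normal $\nu=\nabla U/|\nabla U|$, and the level-set machinery of Theorems~\ref{lem23} and~\ref{deri} is applicable.

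Next I would apply the integral identity (Theorem~\ref{lem23}) with $F=U$ and $S=\Omega_{\rho}(U)$. Because $U\equiv\rho$ is constant on $\Gamma_{\rho}(U)$ and $\nu=\nabla U/|\nabla U|$ there, the boundary term collapses to $\int_{\Gamma_{\rho}}\frac{a(x)}{|\nabla U|}u\,\mathrm{d}s$. Writing $Q(\rho):=\int_{\Omega_{\rho}}(\mathcal{L}_{\epsilon}U)u\,\mathrm{d}x$ and $g(\rho):=\int_{\Gamma_{\rho}}\frac{u}{|\nabla U|}\,\mathrm{d}s$, the pointwise bound $H_{1}(\rho)\le a(x)\le H_{2}(\rho)$ on $\Gamma_{\rho}$ together with $u/|\nabla U|\ge 0$ gives the sandwich $H_{1}(\rho)g(\rho)\le Q(\rho)\le H_{2}(\rho)g(\rho)$, so that $g\ge Q/H_{2}$ and $g\le Q/H_{1}$. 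By Theorem~\ref{deri}, $g(\rho)=G'(\rho)$. The weak-Lyapunov sign condition now enters through monotonicity of $Q$: for $\rho_m<\rho_{1}<\rho_{2}<\rho_{M}$ the regions $\Omega_{\rho_2}\setminus\Omega_{\rho_1}$ lie in $S\setminus\Omega_{\rho_m}(U)$, where $\mathcal{L}_{\epsilon}U\le 0$, so additivity of the integral yields $Q(\rho_{2})-Q(\rho_{1})=\int_{\Omega_{\rho_{2}}\setminus\Omega_{\rho_{1}}}(\mathcal{L}_{\epsilon}U)u\,\mathrm{d}x\le 0$; thus $Q$ is nonincreasing on $(\rho_m,\rho_M)$.

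The key step, which I expect to be the main obstacle, is to close these one-sided facts into a single inequality. Combining $g\ge Q/H_{2}$ with the monotonicity $Q(s)\ge Q(\rho)$ for $s\le\rho$, I would estimate
\[
 G(\rho)=\int_{\rho_m}^{\rho}g(s)\,\mathrm{d}s\ge\int_{\rho_m}^{\rho}\frac{Q(s)}{H_{2}(s)}\,\mathrm{d}s\ge Q(\rho)\int_{\rho_m}^{\rho}\frac{\mathrm{d}s}{H_{2}(s)},
\]
which rearranges to the upper bound $Q(\rho)\le G(\rho)/\int_{\rho_m}^{\rho}H_{2}^{-1}(s)\,\mathrm{d}s$. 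Feeding this into $G'(\rho)=g(\rho)\le Q(\rho)/H_{1}(\rho)$ produces the differential inequality $G'(\rho)\le G(\rho)/\tilde H(\rho)$ with $\tilde H(\rho)=H_{1}(\rho)\int_{\rho_m}^{\rho}H_{2}^{-1}(s)\,\mathrm{d}s$. I would note that this chain never divides by $Q(\rho)$, so it holds pointwise even where $Q$ vanishes; the only genuine subtlety is recognizing that monotonicity of $Q$ is exactly what converts the separate bounds $g\ge Q/H_2$ and $g\le Q/H_1$ into a self-referential inequality in $G$.

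Finally, since the density $u$ is strictly positive we have $G(\rho)>0$ for $\rho>\rho_m$, so $\log G$ is differentiable with $\tfrac{\mathrm{d}}{\mathrm{d}\rho}\log G(\rho)\le \tilde H(\rho)^{-1}$; integrating from $\rho$ to $\rho_M$, and using $G(\rho_M)=\lim_{\rho'\to\rho_M^{-}}G(\rho')$ (monotone convergence, as $\Omega_{\rho'}(U)\uparrow\Omega_{\rho_M}(U)$), gives exactly $\mu_{\epsilon}(\Omega_{\rho_M}\setminus\Omega_{\rho_m})\le\mu_{\epsilon}(\Omega_{\rho}\setminus\Omega_{\rho_m})\,e^{\int_{\rho}^{\rho_M}\tilde H^{-1}(s)\,\mathrm{d}s}$. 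Finiteness and continuity of all the integrals follow from continuity and positivity of $H_{1},H_{2}$ and from $\rho$ being bounded away from $\rho_m$ in the final integration.
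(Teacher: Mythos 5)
Your argument is correct. Note, however, that the paper itself gives no proof of this lemma: it is quoted verbatim as Theorem A(c) of \cite{huang1}, and your derivation --- integral identity on $\Omega_\rho(U)$ to get $H_1(\rho)G'(\rho)\le Q(\rho)\le H_2(\rho)G'(\rho)$, monotonicity of $Q$ from $\mathcal{L}_\epsilon U\le 0$ to close the loop into $G'\le G/\tilde H$, then Gronwall --- is essentially the level-set argument on which that cited result is based, so there is nothing substantive to contrast.
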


\subsection{Hanack inequality} We recall the following Harnack
inequality from  \cite{gilbarg2001elliptic}.

\begin{lem}
\label{harnack} Consider an elliptic operator
$$
  Lu(x) = \sum_{i,j=1}^{n}\partial_{i}(a_{ij}(x)\partial_{j}u(x))
  +\sum_{i=1}^{n}\partial_{i}(b_{i}(x)u(x)) +
  \sum_{i=1}^{n}c_{i}(x)\partial_{i}u(x) + d(x)u(x)
$$
in a domain $\Omega \subset \mathbb{R}^{n}$. Let $\lambda$ and
$\Lambda$ be two constants depend on matrix $\{a_{ij}(x)\}$ such
that
$$
  \sum_{i,j=1}^{n}a_{ij}(x)\zeta_{i}\zeta_{j} \geq \lambda |\zeta|^{2}
$$
and
$$
  \sum_{i,j=1}^{n}|a_{ij}(x)|^{2} \leq \Lambda^{2} \,.
$$
Let $\nu$ be a constant such that
$$
  \lambda^{-2}\sum_{i=1}^{n}(|b_{i}(x)|^{2} + |c_{i}(x)|^{2} +
  \lambda^{-1}|d(x)|) \leq \nu^{2} \,.
$$
Then for any ball $B_{4R}(y) \subset \Omega$, we have
$$
  \sup_{x \in B_{R}(y)} u(x) \leq C
  \inf_{x \in B_{R}(y)} u(x)
$$
where $C \leq C_{0}(n)^{(\Lambda/\lambda + \nu R)}$.
\end{lem}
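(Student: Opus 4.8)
The plan is to prove this by Moser's iteration, since the statement is exactly the De Giorgi--Nash--Moser Harnack inequality for nonnegative weak solutions $u\ge 0$ of $Lu=0$ in $\Omega$, under the uniform ellipticity bound $\lambda$ and the stated bounds $\Lambda,\nu$ on the coefficients. As the result is attributed to \cite{gilbarg2001elliptic}, I would follow the argument there (its Theorem~8.20), whose overall structure is to combine a \emph{local boundedness} estimate (an upper bound for $\sup u$ by an $L^{p}$ average) with a \emph{weak Harnack} inequality (a lower bound for $\inf u$ by an $L^{p}$ average), and then to bridge the two by the John--Nirenberg lemma.

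First I would derive a Caccioppoli-type energy inequality. Testing the weak formulation of $Lu=0$ against $\eta^{2}|u|^{\beta-1}u$, where $\eta\in C_0^{\infty}(\Omega)$ is a cutoff and $\beta$ is a parameter to be chosen, and using the uniform ellipticity together with Young's inequality to absorb the lower-order terms (controlled by $\nu$) into the leading gradient term, yields a bound for $\int \eta^{2}|\nabla(u^{(\beta+1)/2})|^{2}$ by $\int|\nabla\eta|^{2} u^{\beta+1}$ plus lower-order contributions. Inserting this into the Sobolev inequality upgrades the integrability of $u$ from $L^{q}$ to $L^{q\chi}$ on a slightly smaller ball, with $\chi=n/(n-2)>1$.

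Next comes the iteration itself. For positive powers, iterating the integrability gain $q\mapsto q\chi$ over a geometric sequence of balls shrinking from $B_{2R}$ down to $B_{R}$ produces the local boundedness estimate $\sup_{B_{R}} u \le C\big(|B_{2R}|^{-1}\int_{B_{2R}} u^{p}\,dx\big)^{1/p}$ for any fixed $p>0$; running the analogous scheme with negative exponents gives $\big(|B_{2R}|^{-1}\int_{B_{2R}} u^{-p}\,dx\big)^{-1/p}\le C\inf_{B_{R}} u$. The missing link, and the main obstacle, is to connect the small positive and negative moments of $u$. For this I would use the test function $\eta^{2} u^{-1}$ to obtain a Poincar\'e-type estimate showing that $\log u$ has bounded mean oscillation on $B_{2R}$, and then invoke the John--Nirenberg lemma to produce a small exponent $p_0>0$ for which $\big(|B_{2R}|^{-1}\int u^{p_0}\big)\big(|B_{2R}|^{-1}\int u^{-p_0}\big)\le C$.

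Finally I would chain the three estimates at the common exponent $p=p_0$, composing the positive-moment bound, the John--Nirenberg bridge, and the negative-moment bound in turn to reach $\sup_{B_{R}}u \le C\inf_{B_{R}}u$. Throughout, I would track how each constant depends on the data, namely the scaling in $R$, the ellipticity ratio $\Lambda/\lambda$, and the lower-order bound $\nu$, keeping only the dimension $n$ as an absolute constant; since the iteration closes after a dimension-controlled number of steps, the accumulated constant acquires the stated exponential form $C\le C_0(n)^{\Lambda/\lambda+\nu R}$. The two delicate points are choosing the test-function exponents so that the iteration closes (the borderline exponent, where the power of $u$ degenerates into a logarithm, being precisely what necessitates the separate logarithmic estimate), and the BMO/John--Nirenberg step, which forces $p_0$ to be genuinely small.
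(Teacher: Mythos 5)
The paper does not prove this lemma at all—it simply recalls it as a known result from Gilbarg--Trudinger \cite{gilbarg2001elliptic} (their Theorem 8.20)—and your sketch is precisely the standard Moser-iteration proof given there: Caccioppoli estimate, iteration for positive and negative powers, and the John--Nirenberg bridge via the BMO bound on $\log u$. Your outline is correct and matches the cited source's argument, including the tracking of the constant's exponential dependence on $\Lambda/\lambda + \nu R$.
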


\section{Concentration of stationary measures}\label{concentration}

We make the following standard hypothesis:

\medskip
\begin{itemize}
\item[{\bf H$^0$)}] System \eqref{ODE1} is dissipative
 and there exists a strong Lyapunov function
$W(x)$  with respect to an isolating neighborhood $S:=\mathcal{N}$
of the global attractor $\mathcal{A}$ such that
\begin{displaymath}
   W(x) \geq L_{1} \mathrm{dist}^{2}(x, \mathcal{A}),\;\; x \in \mathcal{N}
\end{displaymath}
for some $L_{1} > 0$.
\end{itemize}

\medskip

\begin{rem} When $\mathcal{A}$ is an equilibrium or a limit cycle, the
stable foliation theorem asserts that a neighborhood $\mathcal{N}$
of $\mathcal{A}$ can be taken as a ball, and consequently $W(x)$ can
be taken as $\mathrm{dist}(x, \mathcal{A})^{2}$.
\end{rem}

\medskip

 When noises are added to the ODE system \eqref{ODE1},
our theory requires characterizations and estimates of stochastic
quantities such as mean square displacement and entropy-dimension
formula of stationary measures of the Fokker-Planck equation
  \eqref{FPE1} associated with the SDE system \eqref{SDE1}. It turns
  out that, for these quantities to be well-defined, the following
  condition on the stationary measures of  \eqref{FPE1} is needed:
\medskip

\begin{itemize}
  \item[{\bf H$^1$)}] For each $\epsilon \in (0, \epsilon^{*})$, the Fokker-Planck equation
  \eqref{FPE1} admits a unique stationary measure
    $\mu_{\epsilon}$ such that
$$
 \lim_{\epsilon \rightarrow 0} \frac{\mu_{\epsilon}( \mathbb{R}^{n}
   \setminus \mathcal{N})}{\epsilon^{2}}  = 0,
$$
and moreover, there are constants $p, R_0>0$ such that
$$
  \mu_{\epsilon}( \{ x \,:\, |x| > r \}) \leq e^{-\frac{r^{p}}{\epsilon^{2}}}
$$
for all $r>R_0$ and all $\epsilon \in (0, \epsilon^{*})$.
\end{itemize}
\medskip

Throughout the rest of the paper, for any fixed $\epsilon\in
(0,\epsilon^*)$, we let $\mu_{\epsilon}$ denote the unique
stationary probability measure of \eqref{SDE1} or the stationary
measure of  \eqref{FPE1} and let $u_\epsilon(x)$ or, when it does
not cause confusion, $u(x)$ stand for the (classical) stationary
solution of equation \eqref{ssFPE}, which is the density function of
$\mu_{\epsilon}$.

To estimate these stochastic quantities mentioned above
  rigorously, it is essential to perform estimates on the
concentration of $\mu_{\epsilon}$ both near and away from $\mathcal{A}$.
 In Section 3.1, we will conduct estimates on the local
concentration of $\mu_\epsilon$ in the vicinity of $\mathcal{A}$ by
making use of assumption {\bf H$^0$)} and give estimates of the tails
of $\mu_\epsilon$ by providing a sufficient condition which ensures
the validity of the condition {\bf H$^1$)}.

 We remark that the
estimation in Section 3.1 only provides one of many approaches to verify {\bf
  H$^{0}$)} and {\bf H$^{1}$)}. Essentially {\bf
  H$^{1}$)} assumes that $\mu_{\epsilon}$ has sufficient concentration on an isolating
neighborhood such that we can
focus on the local analysis in the vicinity of the global
attractor. As discussed in Example \ref{ex2} below, such concentration is satisfied by many problems in
applications, although it may be
difficult to give generic sufficient conditions. In particular, the
quasi-potential function defined in \eqref{QPP}, if exists, leads to
the desired concentration of $\mu_{\epsilon}$ immediately. If a
quasi-potential function as in \eqref{QPP} exists and is
differentiable, then it is a Lyapunov function of \eqref{ODE1} \cite{FW}. This
provides an alternative way of verifying {\bf H$^{0}$)}.  For
results regarding high regularity of the quasi-potential function, see
\cite{day1994regularity, day1985some}.

\medskip

\subsection{Estimating tails of stationary measures}

The purpose of this subsection is to provide an
  alternative way to verify assumption {\bf H$^{0}$)} and {\bf
    H$^{1}$)}. This is important in applications as rigorously verifying the
  quasi-potential landscape may be difficult for some
  models. Instead of using the quasi-potential function, we use a
  suitable Lyapunov function of  system \eqref{ODE1} to facilitate our study. To
  characterize the property of the desired Lyapunov function, the following
  definitions are necessary.

A compact function $U$ on a connected open set $S\subset\R^n$ is
said to be of the {\it
  class $\mathcal{B}^{*}$} in $S$ if  there is a
constant $p>0$ and a function $H(\rho) \in L^{1}_{loc}([\rho_{0},
\rho_M))$   such that
$$
 H(\rho) \geq |\nabla U(x)|^{2},\quad x \in \Gamma_{\rho}(U)
$$
and
$$
  \int_{\rho_0}^{\rho}\frac{1}{H(s)} \mathrm{d}s \geq  |x|^{p}, \quad x
 \in \Gamma_{\rho}(U)
$$
for all $\rho\in (\rho_0,\rho_M)$,  where  $\rho_0=\inf_{x\in S}
(x)$.

%{\color{red} ($R$ in the original draft seems unnecessary)}
%{\color{blue} (What $U$ would satisfy the conditions above to be of
%the class ${\cal B}^*$?)}
%$U$ is said to be of {\it class $\mathcal{B}_{*}$} if there is a
%function $H(\rho) \in L^{1}_{loc}([\rho_{m}, \rho_{M}))$ such that
%$$
%  H(\rho) \leq |\nabla U(x)|^{2},\quad x \in
%  \Gamma_{\rho}(U),\quad
 % \rho \geq \rho_{m}
%$$
%and $H^{-1}(\rho)$ is integrable on compact sets.

\begin{rem}
According to the definition, a compact function $U(x)$ is of class
$\mathcal{B}^{*}$ in $\mathbb R^n$ if (i) $U(x)$ has bounded first
order derivative and (ii) $\lim_{|x| \rightarrow \infty}
\frac{U(x)}{|x|^{p}} > 0$ for some $p > 0$. We will show that when
\eqref{FPE1} admits a class $\mathcal{B}^{*}$ Lyapunov function, its
stationary measure has an exponential tail. One example of class
$\mathcal{B}^{*}$ function will be given at the end of this
subsection.
\end{rem}

 We will estimate the tails of
stationary measures of \eqref{FPE1} by dividing $\R^n\setminus \cal
N$ into two regions: a neighborhood ${\cal N}_\infty$ of $\infty$ in
$\R^n$,  i.e., the complement of a
  sufficiently large compact set, and the intermediate region ${\cal N}_*$ between ${\cal
N}_\infty$ and $\cal N$. We make the following hypothesis:
\medskip
\begin{itemize}
\item[{\bf H$^{2}$)}] There is a positive function $U\in C^2(\R^n\setminus \cal
A)$ satisfying the following properties:
\begin{itemize} \item[{\rm
i)}] $\lim_{x\to\infty} U(x)=\infty$; \item[{\rm ii)}] There exists
a constant $\rho_m>0$  such that $U$ is a uniform Lyapunov function
of the family \eqref{FPE1} of class ${\cal B}^*$ in ${\cal
N}_\infty=:\mathbb{R}^{n}\setminus \Omega_{\rho_{m}}(U)$;
\item[{\rm iii)}] There exists a constant $\bar\rho_m\in (0,\rho_m)$ such
that $U$ is a uniform weak Lyapunov function of the family
\eqref{FPE1} in ${\cal N}_*=:\R^n\setminus {\cal N}_\infty\setminus
\Omega_{\bar\rho_{m}}(U) = \Omega_{\rho_{m}}(U) \setminus \Omega_{\bar{\rho}_{m}}(U)$;

\item[{\rm  iv)}] $\nabla U(x)\ne 0$,
$x\in \R^n\setminus  \Omega_{\bar\rho_{m}}(U)$;
\item[{\rm v)}] $\Omega_{\bar\rho_m} (U)\subset {\cal N}$.
\end{itemize}
\end{itemize}
\medskip

 %family \eqref{FPE1} admits a unbounded, uniform Lyapunov function $U$ of class
 % $\mathcal{B}^{*}$ on
 % ${\cal N}_\infty=:\mathbb{R}^{n}\setminus \Omega_{\rho_{m}}(U)$ such that $\nabla U(x)\ne 0$,
 % $x\in {\cal N}_\infty$, where $\rho_m$ denotes an essential  lower bound of $U$;
 %\item[{\bf A2)}] There are positive constants $\bar{\rho}_{m}, \bar{\rho}_M $ and a uniform weak
 % Lyapunov function $\bar U$ of
 % \eqref{FPE1} in ${\cal N}_*=:\Omega_{\bar{\rho}_{M}}(\bar U)\setminus \Omega_{\bar{\rho}_{m}}(\bar U)$
 %such that $\nabla U(x)\ne 0$,
 % $x\in {\cal N}_*$, $\Omega_{\bar{\rho}_{M}}(\bar U) \supset \Omega_{{\rho}_{m}}(U)$,
 % and $\Omega_{\bar{\rho}_{m}}(\bar U) \subset \mathcal{N}$.

\begin{rem} 1) We note that when {\bf H$^{2}$)} holds, Theorem ~\ref{exist1} asserts the
existence of a unique stationary measure of \eqref{FPE1} for each
$\epsilon\in (0,\epsilon_*)$.

2) With the hypotheses {\bf H$^0$)}, {\bf H$^{2}$)},  the ODE system \eqref{ODE1} is
 dissipative in ${\cal N}_\infty$, strongly dissipative in $\cal
N$, and remains dissipative in ${\cal N}_*$ but with small
dissipation rate proportional to $\epsilon^2$.

3) The purpose of introducing {\bf H$^{2}$)} is to give a Lyapunov
function-based sufficient condition of {\bf H$^{0}$)} and {\bf
  H$^{1}$)}. Except in this subsection, our estimates are based only on {\bf H$^{0}$)} and {\bf
  H$^{1}$)}.

\end{rem}

% \bigskip
% {\color{blue} a) The condition (1) already implies the existence of
% a global attractor, so we need not assume that before hand. Also,
% this condition should be combined with H1) in order to avoid
% repeated hypothesis;

% b) The Condition (2) can be removed, see commends before Corollary
% 3.4;

% c) Note that when the global attractor $\cal A$ is either a
% hyperbolic equilibrium or a hyperbolic cycle,  condition (3) is
% automatic. It follows from the stable manifold theorem that the
% isolating neighborhood can be taken as a ball in $\R^n$ so the
% boundary is a sphere. In other word, $W$ can be taken as $|x|^2$.}

\medskip

 We first estimate the concentration of  stationary
measures of \eqref{FPE1} in the region ${\cal N}_\infty$,
  which verifies the second part of {\bf H$^{1}$)}.
\medskip

\begin{pro}
\label{faraway}  If {\bf H$^{2}$)} holds, then there exist
positive constants $\beta, R_0, p$ such that
\[
\mu_{\epsilon}(\mathbb{R}^{n}\setminus B(0,r))\leq e^{-\beta
\frac{r^{p}}{\epsilon^{2}}},\qquad \epsilon \in (0, \epsilon_{*}) \,,
\]
for  all $r \ge R_{0}$.
%= \max_{x\in  \Gamma_{\rho_m}(U)} |x|$, where $U$ is the uniform Lyapunov function of \eqref{FPE1} as in {\rm A1)}.

\end{pro}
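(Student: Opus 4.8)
The plan is to apply the level-set estimate of Lemma~\ref{lem41} to the uniform Lyapunov function $U$ supplied by {\bf H$^2$)}, restricted to the neighborhood of infinity $\mathcal{N}_\infty = \mathbb{R}^n\setminus\Omega_{\rho_m}(U)$, and then to convert the resulting bound on the super-level set $\{U\ge\rho\}$ into a bound on the exterior $\{|x|>r\}$ using the defining inequalities of the class $\mathcal{B}^*$. By the remark following {\bf H$^2$)}, Theorem~\ref{exist1} guarantees that $\mu_\epsilon$ exists, is unique, and has a density, so Lemma~\ref{lem41} is applicable. On $\mathcal{N}_\infty$, condition (ii) makes $U$ a uniform Lyapunov function of class $\mathcal{B}^*$ with essential lower bound $\rho_m$, base point $\rho_0=\inf_{\mathcal{N}_\infty}U=\rho_m$, and essential upper bound $\rho_M=\infty$ by (i); condition (iv) gives $\nabla U\ne 0$ on all of $\mathcal{N}_\infty$, so the level sets are regular and the Lyapunov constant $\gamma$ is independent of $\epsilon$.

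First I would produce a function admissible in Lemma~\ref{lem41}. Since $\sigma$ is bounded, the diffusion matrix $A=\sigma\sigma^\top$ obeys $\sum_{i,j}a_{ij}(x)\xi_i\xi_j\le C|\xi|^2$ with $C:=\sup_x\|A(x)\|<\infty$ for all $\xi$. If $H_0(\rho)$ denotes the function from the $\mathcal{B}^*$ definition, so $H_0(\rho)\ge|\nabla U(x)|^2$ on $\Gamma_\rho(U)$, then on $\Gamma_\rho(U)$ one has $\tfrac12\epsilon^2\sum_{i,j}a_{ij}\partial_iU\partial_jU\le\tfrac12 C\epsilon^2|\nabla U|^2\le\tfrac12 C\epsilon^2 H_0(\rho)=:H(\rho)$, so $H$ is admissible. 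Writing $G(\rho):=\int_{\rho_m}^{\rho}H_0(t)^{-1}\,dt$, which is exactly the $\mathcal{B}^*$ quantity since $\rho_0=\rho_m$, Lemma~\ref{lem41} gives for every $\rho>\rho_m$
\[
\mu_\epsilon\bigl(\mathbb{R}^n\setminus\Omega_\rho(U)\bigr)\le\exp\Bigl(-\gamma\int_{\rho_m}^{\rho}\frac{dt}{H(t)}\Bigr)=\exp\Bigl(-\frac{2\gamma}{C\epsilon^2}\,G(\rho)\Bigr),
\]
where I use that $\{U\ge\rho\}\subseteq\mathcal{N}_\infty$ for $\rho\ge\rho_m$ and that $\mathcal{A}$ is $\mu_\epsilon$-null.

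The second ingredient is a geometric comparison between sublevel sets and balls. The $\mathcal{B}^*$ inequality $G(\rho)\ge|x|^p$ for $x\in\Gamma_\rho(U)$ says precisely that $\Gamma_\rho(U)\subseteq\overline{B(0,G(\rho)^{1/p})}$. Because $|x|$ has no interior local maximum (its gradient $x/|x|$ is nonzero for $x\ne 0$) and $\partial\Omega_\rho(U)=\Gamma_\rho(U)$ by (iv), the maximum of $|x|$ over the compact set $\overline{\Omega_\rho(U)}$ is attained on $\Gamma_\rho(U)$, whence $\Omega_\rho(U)\subseteq B(0,G(\rho)^{1/p})$. Since $G$ is continuous, strictly increasing, and tends to $\infty$ (again by the $\mathcal{B}^*$ inequality together with (i)), for each $r\ge R_0$ I may pick $\rho_r$ with $G(\rho_r)=r^p$. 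Then $\Omega_{\rho_r}(U)\subseteq B(0,r)$, so $\{|x|>r\}\subseteq\mathbb{R}^n\setminus\Omega_{\rho_r}(U)$, and combining with the displayed bound yields
\[
\mu_\epsilon\bigl(\mathbb{R}^n\setminus B(0,r)\bigr)\le\mu_\epsilon\bigl(\mathbb{R}^n\setminus\Omega_{\rho_r}(U)\bigr)\le\exp\Bigl(-\frac{2\gamma}{C\epsilon^2}\,r^p\Bigr).
\]
Taking $\beta:=2\gamma/C$, with $p$ the $\mathcal{B}^*$ exponent and $R_0$ chosen so that $\Omega_{\rho_m}(U)\subseteq B(0,R_0)$, gives the claim for all $r\ge R_0$ and all $\epsilon\in(0,\epsilon_*)$.

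I expect the main obstacle to be the bookkeeping that keeps every constant uniform in $\epsilon$: the estimate of Lemma~\ref{lem41} carries the Lyapunov constant $\gamma$ and an $\epsilon^2$ buried in $H$, and the content of the proposition is that $\gamma$, $\rho_m$, and the $\mathcal{B}^*$ data $(H_0,p)$ do not move with $\epsilon$ — this is exactly what ``uniform Lyapunov function of class $\mathcal{B}^*$'' delivers, so the only $\epsilon$-dependence left is the explicit $\epsilon^{-2}$ in the exponent. A secondary technical point is the passage $\Omega_\rho(U)\subseteq B(0,G(\rho)^{1/p})$ from the level-set inequality, i.e.\ the no-interior-maximum argument for $|x|$; this is where hypothesis (iv), ensuring $\partial\Omega_\rho(U)=\Gamma_\rho(U)$, is essential.
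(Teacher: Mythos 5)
Your proposal is correct and follows essentially the same route as the paper: both apply Lemma~\ref{lem41} to the class-$\mathcal{B}^*$ uniform Lyapunov function $U$ with an admissible $H$ of the form $(\text{const})\cdot\epsilon^{2}H_{0}(\rho)$ obtained from the boundedness of the diffusion matrix, read off the factor $\epsilon^{-2}|x|^{p}$ in the exponent from the $\mathcal{B}^*$ inequality, and finish by enclosing a suitable sublevel set of $U$ inside $B(0,r)$. The only cosmetic difference is how that sublevel set is chosen --- the paper takes $\rho(r)=\min_{|y|=r}U(y)$ so that the minimizing point on $\Gamma_{\rho(r)}(U)$ has $|x|=r$, while you invert $G$ at $r^{p}$ and justify $\Omega_{\rho_r}(U)\subseteq B(0,r)$ by the no-interior-maximum argument --- and both yield the same bound with the same exponent.
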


\begin{proof} Let $U$ be the uniform Lyapunov function of \eqref{FPE1}
for $\epsilon\in (0,\epsilon_*)$, according to {\bf H$^{2}$)}. Since $U$ is of
class ${\cal B}^*$ in ${\cal N}_\infty$,  there is a function
$H(\rho) \in L^{1}_{loc}([\rho_{m}, \infty))$, where $\rho_m$
denotes the essential lower bound of $U$, such that
$$
  H(\rho) \geq |\nabla U(x)|^{2},\quad x \in \Gamma_{\rho}(U)
$$
and
$$
  \int_{\rho_m}^{\rho}\frac{1}{H(s)} \mathrm{d}s \geq  |x|^{p}, \quad x
  \in \Gamma_{\rho}(U)
$$
for all $\rho > \rho_m$. Using positive definiteness of $A(x)$, we
let $C_{0} > 0$ be a constant such that
$$
 \epsilon^{2} H(\rho) \geq C_{0} \frac{1}{2}\epsilon^{2}
 \sum_{i,j=1}^{n}a_{ij}\partial_{i}U(x) \partial_{j}U(x)
$$
and denote $H_{1}(\rho) = \epsilon^{2}H(\rho)/C_{0}$.  It follows
from Lemma ~\ref{lem41} that
\[
\mu_{\epsilon}(\mathbb{R}^{n}\setminus \Omega_{\rho}(U)) \leq
e^{-\gamma\int_{\rho_m}^{\rho}\frac{1}{H_{1}(s)} \mathrm{d}s}\leq
e^{-\frac{\gamma
    C_{0}}{\epsilon^{2}}\int_{\rho_m}^{\rho}\frac{1}{H(s)} \mathrm{d}s}\leq e^{-\frac{\gamma C_{0}}{\epsilon^{2}}|x|^{p} }
\]
for each $x \in \Gamma_{\rho}(U)$ whenever $\rho>\rho_m$, where
$\gamma>0$ is the Lyapunov constant of $U$.

Let $R_{0} = \max_{x\in
  \Gamma_{r_m}(U)} |x|$, and for each $r \ge R_{0}$,  denote $\rho(r)
= \min_{|y| = r}U(y)$. Let $r\ge R_0$ and take $x\in
\Omega_{\rho(r)}\cap B(0,r)$. Then $\Omega_{\rho(r)}\subset B(0,r)$
and
\[
\mu_{\epsilon}(\mathbb{R}^{n}\setminus B(0,r))\le
\mu_{\epsilon}(\mathbb{R}^{n}\setminus \Omega_{\rho(r)}(U))\le
e^{-\frac{\gamma
C_{0}}{\epsilon^{2}}|x|^{p}}=e^{-\frac{\beta}{\epsilon^{2}}r^{p}},
\]
where $\beta = \tau C_{0}$.
\end{proof}

% {\color{blue} (The following Corollary should follow from Harnak
% inequality and the fact that $\cal A$ is a strong attractor. See the
% argument for local concentration in part III of Huang-Liu-Ji-Yi
% using an overlapping region between a neighborhood of the attractor
% and the ``middle" region, by noting the exponential estimate in a
% neighborhood of the global attractor. In other word, Lemma 3.2 is
% not necessary and the assumption of a weak Lyapunov function in the
% ``mid" region can be removed)}

Next, we estimate the  concentration of  stationary probability
measures of \eqref{FPE1} in the intermediate  region ${\cal N}_*$ to
verify the first part of {\bf H$^{1}$)} . For any connected open set
$S\subset \R^n$, we note that
$\mu_\epsilon|_S=:\mu_\epsilon/\mu_\epsilon(S)$ is a stationary probability
measure of \eqref{FPE1} on $S$.
\medskip

\begin{lem}\label{lem31} Assume {\bf H$^0$)} and  let  $W$ be  as in {\bf H$^0$)}
and $\rho_0>0$ be such that $\Omega_{\rho_0}(W)\subset \cal N$ and
$\Gamma_{\rho_0}\cap \cal A\ne \emptyset$. Then there is an
$\epsilon_0\in (0,\epsilon_*)$ such that
\[
  \mu_{\epsilon}(\mathcal{N} \setminus
\Omega_{\rho_0}(W)) \leq e^{-\frac{C_{1}}{\epsilon^{2}}},\qquad \epsilon\in
(0,\epsilon_0),
\]
where $C_1>0$ is a constant independent of $\epsilon$.
\end{lem}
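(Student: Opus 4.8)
The plan is to use the strong Lyapunov function $W$ of \textbf{H$^0$)} as a Lyapunov function for the stationary Fokker--Planck equation and then read off exponential decay from Lemma~\ref{lem41}. First I would localize to $\mathcal N$: because $\mu_\epsilon|_{\mathcal N}=\mu_\epsilon/\mu_\epsilon(\mathcal N)$ is a stationary probability measure of \eqref{FPE1} on $\mathcal N$ and $\mu_\epsilon(A)\le\mu_\epsilon|_{\mathcal N}(A)$ for every Borel set $A\subseteq\mathcal N$, it suffices to bound $\mu_\epsilon|_{\mathcal N}(\mathcal N\setminus\Omega_{\rho_0}(W))$. I would then pick an intermediate level $\rho_m<\rho_0$ for which the shell $\{x\in\mathcal N:W(x)\ge\rho_m\}$ still stays a fixed positive distance away from $\mathcal A$, treat $W$ as a compact function on $\mathcal N$ with upper bound $\rho_M=\sup_{\mathcal N}W$, and take $\rho_m$ as its essential lower bound.

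The heart of the argument is the sign of $\mathcal L_\epsilon W$. Since
\[
\mathcal L_\epsilon W=f\cdot\nabla W+\tfrac12\epsilon^2\sum_{i,j=1}^n a_{ij}\,\partial_{ij}W,
\]
the strong Lyapunov inequality gives $f\cdot\nabla W\le-\gamma_0|\nabla W|^2$ on $\mathcal N\setminus\mathcal A$, while the diffusion term is bounded by $C_2\epsilon^2$ with $C_2$ depending only on $\sup_{\mathcal N}\|A\|$ and $\sup_{\mathcal N}\|D^2W\|$, both finite as $\overline{\mathcal N}$ is compact and $A,W$ are $C^1,C^2$. On the shell $\{W\ge\rho_m\}$, which is bounded away from $\mathcal A$, continuity and $\nabla W\ne0$ off $\mathcal A$ furnish $|\nabla W|\ge c_0>0$, so that
\[
\mathcal L_\epsilon W\le-\gamma_0 c_0^2+C_2\epsilon^2\le-\gamma<0
\]
once $\epsilon<\epsilon_0$, and $W$ is a Lyapunov function for \eqref{FPE1} with Lyapunov constant $\gamma$ uniform in $\epsilon\in(0,\epsilon_0)$. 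Choosing the constant majorant $H(\rho)\equiv C_3\epsilon^2$ for $\tfrac12\epsilon^2\sum a_{ij}\partial_iW\partial_jW$ on the level sets (possible since $|\nabla W|$ is bounded on $\overline{\mathcal N}$) and applying Lemma~\ref{lem41} at $\rho=\rho_0$ gives
\[
\mu_\epsilon|_{\mathcal N}\big(\Omega_{\rho_M}(W)\setminus\Omega_{\rho_0}(W)\big)\le e^{-\gamma\int_{\rho_m}^{\rho_0}\frac{dt}{C_3\epsilon^2}}=e^{-\frac{\gamma(\rho_0-\rho_m)}{C_3\,\epsilon^2}},
\]
which, after identifying $\Omega_{\rho_M}(W)=\mathcal N$, is the claimed bound with $C_1=\gamma(\rho_0-\rho_m)/C_3$.

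The step I expect to be the main obstacle is guaranteeing a \emph{uniform} lower bound $|\nabla W|\ge c_0$ on a shell reaching \emph{below} the level $\rho_0$: the whole scheme collapses if this shell is forced to approach $\mathcal A$, where $\nabla W$ degenerates and the $O(\epsilon^2)$ diffusion term can no longer be absorbed. Inserting a level $\rho_m$ strictly between $\sup_{\mathcal A}W$ and $\rho_0$ is exactly what buys the gap, so the crux is to verify that the hypotheses indeed leave such room; if $\rho_0$ is taken so that $\Gamma_{\rho_0}$ actually meets $\mathcal A$, then no such $\rho_m$ exists and the degeneracy of $\nabla W$ at the touching set becomes the genuine difficulty. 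In that case I would isolate the thin collar where $|\nabla W|=O(\epsilon)$, control its $\mu_\epsilon$-mass through the quadratic bound $W\ge L_1\,\mathrm{dist}^2(x,\mathcal A)$ together with the weak-Lyapunov estimate of Lemma~\ref{lem33}, and splice it to the strong estimate above. A secondary point to confirm is that $W$ genuinely behaves as a compact function on $\mathcal N$, i.e.\ its sublevel sets exhaust $\mathcal N$ and $W\to\rho_M$ at $\partial\mathcal N$; failing that, I would instead run the integral-identity argument of Lemmas~\ref{lem23}--\ref{deri} directly on the annuli $\Omega_{\rho}(W)\setminus\Omega_{\rho_m}(W)$ that are compactly contained in $\mathcal N$.
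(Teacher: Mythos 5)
Your proposal is correct and follows essentially the same route as the paper: restrict to $\mu_\epsilon|_{\mathcal N}$, verify that $W$ becomes a uniform Lyapunov function for the Fokker--Planck equation on $\mathcal N$ once $\epsilon$ is small (the $O(\epsilon^2)$ diffusion term being absorbed by $-\gamma_0|\nabla W|^2$ on a shell bounded away from $\mathcal A$), and invoke Lemma~\ref{lem41} with $H(\rho)=O(\epsilon^2)$ to get the $e^{-C_1/\epsilon^2}$ decay. Your insertion of an essential lower bound $\rho_m<\rho_0$ is the right reading (the paper's proof nominally takes $\rho_0$ itself as the lower bound, which would make the exponent vanish when evaluated at $\rho=\rho_0$), and the hypothesis $\Gamma_{\rho_0}\cap\mathcal A\ne\emptyset$ that you flagged as the potential obstruction is a typo for $\Gamma_{\rho_0}\cap\mathcal A=\emptyset$ (compare its use in Theorems~\ref{accurate} and~\ref{levelalpha}; indeed $W\le L_2\,\mathrm{dist}^2(x,\mathcal A)$ forces $W=0$ on $\mathcal A$, so no positive level set can meet $\mathcal A$), which makes your fallback argument for the touching case unnecessary.
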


\begin{proof} It is easy to see that there exists an $\epsilon_0\in
(0,\epsilon_*)$ such that $W$ becomes a uniform Lyapunov function
of \eqref{FPE1} for all $\epsilon\in (0,\epsilon_0)$ in $S=:\cal N$,
with upper bound $\rho^0=\sup \{\rho>0: \Omega_\rho(W)\subset\cal
N\}$ and essential lower bound $\rho_0$. The lemma now follows from
an application of Lemma ~\ref{lem41} to $\mu_\epsilon|_{\cal N}$ with
\[
H(\rho)=\frac{1}{2}\epsilon^{2}\min_{x\in
\Gamma_\rho(W)}(\sum_{i,j=1}^{n}a_{ij}(x)\partial_{x_{i}}W(x)\partial_{x_{j}}W(x)),\qquad
\rho\in (\rho_0,\rho^0).
\]
\end{proof}

\medskip

\begin{pro}
\label{mid} Assume {\bf H$^0$)}, {\bf H$^{2}$)} and  let $W$
be as in {\bf H$^0$)} and $R_0$ be as in {\rm
Proposition~\ref{faraway}}. Then there are constants $\rho_0, c_0>0$
and $\epsilon_0\in (0,\epsilon_*)$ such that
$$
\mu_{\epsilon}(B(0, R_{0}) \setminus \Omega_{\rho_0}(W)) \leq
e^{-\frac{c_{0}}{\epsilon^{2}}},\qquad \epsilon \in (0, \epsilon_{0}) \,.
$$
\end{pro}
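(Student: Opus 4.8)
The plan is to split the bounded set $B(0,R_0)\setminus\Omega_{\rho_0}(W)$ into the part lying in the isolating neighborhood $\mathcal N$, where the strong Lyapunov function $W$ forces fast decay, and the part lying in the intermediate region $\mathcal N_*=\Omega_{\rho_m}(U)\setminus\Omega_{\bar\rho_m}(U)$, where only the weak Lyapunov function $U$ is available. Concretely I would choose $\rho_0$ small enough that $\Omega_{\rho_0}(W)\subseteq\Omega_{\bar\rho_m}(U)$ (so the two pieces match cleanly near $\mathcal A$) and use
\[
B(0,R_0)\setminus\Omega_{\rho_0}(W)\subseteq\bigl(\mathcal N\setminus\Omega_{\rho_0}(W)\bigr)\cup\mathcal N_*\cup\bigl(\mathcal N_\infty\cap B(0,R_0)\bigr).
\]
Any sliver of $\mathcal N_\infty=\mathbb R^n\setminus\Omega_{\rho_m}(U)$ that dips inside $B(0,R_0)$ has $|x|$ bounded below by $r_m:=\min_{\Gamma_{\rho_m}(U)}|x|>0$, so the class $\mathcal B^*$ computation of Proposition~\ref{faraway} applied at level $\rho_m$ gives $\mu_\epsilon(\mathcal N_\infty)\le e^{-c r_m^{p}/\epsilon^2}$; this piece is harmless.

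For the first piece, Lemma~\ref{lem31} applies directly: there exist $C_1>0$ and $\epsilon_0$ with $\mu_\epsilon(\mathcal N\setminus\Omega_{\rho_0}(W))\le e^{-C_1/\epsilon^2}$ for $\epsilon\in(0,\epsilon_0)$. The substance is the second piece. Here I would apply Lemma~\ref{lem33} with $S=\mathcal N_*$, essential bounds $\bar\rho_m<\rho_m$, and the weak Lyapunov function $U$ furnished by {\bf H$^{2}$)} iii)--iv), taking
\[
H_1(\rho)=\tfrac12\epsilon^2\min_{x\in\Gamma_\rho(U)}\sum_{i,j}a_{ij}\partial_iU\partial_jU,\qquad
H_2(\rho)=\tfrac12\epsilon^2\max_{x\in\Gamma_\rho(U)}\sum_{i,j}a_{ij}\partial_iU\partial_jU,
\]
which are positive and continuous in $\rho$ on the compact annulus because $\nabla U\ne0$ there and $A$ is positive definite. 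Lemma~\ref{lem33} then gives, for any $\rho\in(\bar\rho_m,\rho_m)$,
\[
\mu_\epsilon(\mathcal N_*)\le\mu_\epsilon\bigl(\Omega_\rho(U)\setminus\Omega_{\bar\rho_m}(U)\bigr)\,
e^{\int_\rho^{\rho_m}\tilde H(s)^{-1}\,ds},\qquad
\tilde H(\rho)=H_1(\rho)\!\int_{\bar\rho_m}^\rho H_2(s)^{-1}\,ds.
\]

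The decisive observation is that the comparison exponent is bounded uniformly in $\epsilon$. Both $H_1$ and $H_2$ are proportional to $\epsilon^2$, so the factor $\epsilon^2$ in $H_1$ cancels the factor $\epsilon^{-2}$ produced by $\int_{\bar\rho_m}^\rho H_2^{-1}$; hence $\tilde H$ is an $\epsilon$-independent positive continuous function, bounded below away from zero on $[\rho,\rho_m]$, and $\int_\rho^{\rho_m}\tilde H^{-1}\le C_2$ with $C_2$ independent of $\epsilon$. I would then fix $\rho$ close to $\bar\rho_m$ so that the inner annulus $\Omega_\rho(U)\setminus\Omega_{\bar\rho_m}(U)$ lies inside $\mathcal N\setminus\Omega_{\rho_0}(W)$ (possible since $\Omega_{\bar\rho_m}(U)\subset\mathcal N$ and $\Omega_{\rho_0}(W)\subseteq\Omega_{\bar\rho_m}(U)$), so Lemma~\ref{lem31} bounds its measure by $e^{-C_1/\epsilon^2}$. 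Combining yields $\mu_\epsilon(\mathcal N_*)\le e^{C_2}e^{-C_1/\epsilon^2}$, and adding the three pieces gives $\mu_\epsilon(B(0,R_0)\setminus\Omega_{\rho_0}(W))\le e^{-c_0/\epsilon^2}$ for any $c_0<C_1$ once $\epsilon_0$ is small.

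The step I expect to be the main obstacle is precisely this uniform control of the Lemma~\ref{lem33} exponent. The weak Lyapunov function supplies no intrinsic decay in $\mathcal N_*$, where the ODE dissipation is only of order $\epsilon^2$, so the exponential smallness cannot be generated inside $\mathcal N_*$; it must instead be \emph{transported} from the inner boundary $\Gamma_{\bar\rho_m}(U)$, which sits inside the strongly dissipative region $\mathcal N$, outward across $\mathcal N_*$. This transport is benign only because the cancellation of $\epsilon^2$ in $\tilde H$ keeps the comparison constant $O(1)$; without it the factor would blow up like $e^{c/\epsilon^2}$ and destroy the estimate. A secondary technical point is the clean geometric nesting of the level sets of $U$ and $W$ near $\mathcal A$, which the choices of $\rho_0$ and $\rho$ are arranged to secure.
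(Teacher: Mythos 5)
Your argument is, in its essentials, the paper's own proof: the inner piece is handled by Lemma~\ref{lem31}, and the decisive step --- running the Lemma~\ref{lem33} comparison with $H_1,H_2$ both proportional to $\epsilon^2$ so that $\tilde H$ is $\epsilon$-independent, thereby transporting the bound $e^{-C_1/\epsilon^2}$ from an inner annulus contained in $\mathcal N\setminus\Omega_{\rho_0}(W)$ outward across the weakly dissipative region --- is exactly the mechanism the paper uses, including the nesting $\Omega_{\rho_0}(W)\subset\Omega_{\bar\rho_m}(U)\subset\mathcal N$ that lets Lemma~\ref{lem31} control the inner annulus.

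The one step that does not go through as written is your treatment of $\mathcal N_\infty\cap B(0,R_0)$. The class $\mathcal B^*$ tail estimate in Proposition~\ref{faraway} comes from Lemma~\ref{lem41}, which bounds $\mu_\epsilon(\mathbb R^n\setminus\Omega_\rho(U))$ by $e^{-\gamma\int_{\rho_m}^\rho H^{-1}(s)\,ds}$; at the level $\rho=\rho_m$ this degenerates to the trivial bound $1$, and the inequality $\int_{\rho_m}^{\rho}H^{-1}(s)\,ds\ge |x|^p$ cannot hold as $\rho\downarrow\rho_m$ because the left side tends to $0$ while $|x|\ge r_m>0$ on $\Gamma_{\rho_m}(U)$. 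So ``applying the computation at level $\rho_m$'' does not yield $\mu_\epsilon(\mathcal N_\infty)\le e^{-cr_m^p/\epsilon^2}$; the collar of $\mathcal N_\infty$ just outside $\Gamma_{\rho_m}(U)$ is precisely the set about which Lemma~\ref{lem41} says nothing. The repair is the one the paper makes: since $U$ is a genuine uniform Lyapunov function on $\mathcal N_\infty$, it is a fortiori a uniform weak Lyapunov function there, so your Lemma~\ref{lem33} comparison can be run from $\bar\rho_m$ up to any $\bar\rho_M>\rho_m$ with $B(0,R_0)\subset\Omega_{\bar\rho_M}(U)$ (the $\epsilon^2$ cancellation in $\tilde H$ is unaffected), which absorbs the third piece into the second and removes it from the decomposition entirely. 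With that change your proof coincides with the paper's.
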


\begin{proof} Let $U,\rho_m,\bar\rho_m$ be as in {\bf H$^{2}$)} and
 $
\bar\rho_M\in (\rho_m,\infty)$  be such that $B(0,R_0)\subset
\Omega_{\bar\rho_M} (U)$. Without loss of generality, we assume that
$\Gamma_{\bar\rho_m}(U)\cap \cal A=\emptyset$.
 Denote
$\rho^0=\sup\{\rho>0: \Omega_\rho(W)\subset\cal N\}$ and let
$\rho_0\in (0,\rho^0)$, $\bar\rho_*\in (\rho_0,\rho^0)$  be such
that $\Gamma_{\bar\rho_0}(W)\cap \cal A=\emptyset$,
$\Omega_{\rho_0}(W)\subset\Omega_{\bar\rho_m} (U)\subset
\Omega_{\bar\rho_*}(U)\subset \Omega_{\rho^0}(W)$.

By Lemma~\ref{lem31}, there exists an $\epsilon_0>0$ and a constant
$C_1>0$ such that
\begin{equation}\label{e1}
  \mu_{\epsilon}(\mathcal{N} \setminus
\Omega_{\rho_0}(W)) \leq e^{-\frac{C_{1}}{\epsilon^{2}}},\qquad \epsilon\in
(0,\epsilon_0).
\end{equation}

Since $U$ is a uniform Lyapunov function of \eqref{FPE1} for
$\epsilon \in (0,\epsilon_*)$ on $S=:\Omega_{\bar\rho_M}(U)$, an
application of Lemma~\ref{lem33} to $\mu_\epsilon|_S$ with
\begin{eqnarray*}
 && H_{1}(\rho)=\frac{1}{2}\epsilon^{2}\min_{x\in \Gamma_\rho(U)}(\sum_{i,j = 1}^{n}a_{ij}(x)\partial_{i}
  U(x)\partial_{j} U(x)),\\
&& H_{2}(\rho)=\frac{1}{2}\epsilon^{2}\max_{x\in
\Gamma_\rho(U)}(\sum_{i,j = 1}^{n}a_{ij}(x)\partial_{i}
  U(x)\partial_{j} U(x)),\quad \rho\in (\bar\rho_m, \bar\rho_M),
  \end{eqnarray*}
  yields that there is a constant $C_2>0$ independent of $\epsilon$
  such that
  \begin{eqnarray}
  \mu_\epsilon(B(0,R_0)\setminus \Omega_{\rho_0}(W))
  &\le& \mu_\epsilon(S\setminus \Omega_{\bar\rho_m}(U))+\mu_\epsilon(\cal N\setminus
  \Omega_{\rho_0}(W))\nonumber\\
&\le& C_2\mu_\epsilon(\Omega_{\bar\rho_*}(U)\setminus
\Omega_{\bar\rho_m}(U))+\mu_\epsilon(\cal N\setminus
\Omega_{\rho_0}(W))\nonumber\\
& \le& (C_2+1) \mu_\epsilon(\cal N\setminus
\Omega_{\rho_0}(W)).\label{e2}
\end{eqnarray}
The proposition now easily follows from \eqref{e1}, \eqref{e2}.

\end{proof}

 Now,  Propositions~\ref{faraway},~\ref{mid} immediately
yields the following result.
\medskip
\begin{cor}
\label{cor31}
Conditions {\bf H$^0$), H$^{2}$)} imply {\bf H$^1$)}.
\end{cor}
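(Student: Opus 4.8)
The plan is to read this corollary as a bookkeeping consequence of the two preceding propositions, since all the analytic work has already been done. First, under \textbf{H}$^2$) the existence and uniqueness of $\mu_\epsilon$ for each $\epsilon \in (0,\epsilon_*)$ is guaranteed by Theorem~\ref{exist1} (as noted in the remark following \textbf{H}$^2$)), so the only substance left is to verify the two quantitative concentration statements in \textbf{H}$^1$).

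The exponential tail bound (the second assertion of \textbf{H}$^1$)) is immediate from Proposition~\ref{faraway}, which gives $\mu_\epsilon(\mathbb{R}^n \setminus B(0,r)) \leq e^{-\beta r^p/\epsilon^2}$ uniformly in $\epsilon \in (0,\epsilon_*)$ for $r \geq R_0$. To match the precise form $e^{-r^{p}/\epsilon^2}$ demanded in \textbf{H}$^1$) (which carries no prefactor $\beta$), I would absorb $\beta$ by shrinking the exponent and enlarging $R_0$: choosing any $q \in (0,p)$ and increasing $R_0$ so that $\beta r^p \geq r^q$ for all $r \geq R_0$, one gets $\mu_\epsilon(\{|x|>r\}) \leq e^{-r^q/\epsilon^2}$, which is of the required form with $p$ replaced by $q$.

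For the first assertion, I would estimate $\mu_\epsilon(\mathbb{R}^n \setminus \mathcal{N})$ by splitting the complement of $\mathcal{N}$ into a bounded piece and a far piece. Since \textbf{H}$^0$) and \textbf{H}$^2$) yield a constant $\rho_0 > 0$ with $\Omega_{\rho_0}(W) \subset \mathcal{N}$ (as used in Proposition~\ref{mid}), one has
\[
  \mathbb{R}^n \setminus \mathcal{N} \subset \mathbb{R}^n \setminus \Omega_{\rho_0}(W)
  = \bigl(B(0,R_0)\setminus \Omega_{\rho_0}(W)\bigr) \cup \bigl(\mathbb{R}^n \setminus B(0,R_0)\bigr),
\]
with $R_0$ taken common to both propositions. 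Applying Proposition~\ref{mid} to the first set and Proposition~\ref{faraway} (at $r=R_0$) to the second gives, for all sufficiently small $\epsilon$,
\[
  \mu_\epsilon(\mathbb{R}^n \setminus \mathcal{N}) \leq e^{-c_0/\epsilon^2} + e^{-\beta R_0^p/\epsilon^2} \leq 2\,e^{-c/\epsilon^2},
\]
where $c = \min\{c_0,\ \beta R_0^p\} > 0$.

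Finally, dividing by $\epsilon^2$ and letting $\epsilon \to 0$, the bound $\mu_\epsilon(\mathbb{R}^n \setminus \mathcal{N})/\epsilon^2 \leq 2 e^{-c/\epsilon^2}/\epsilon^2 \to 0$ follows because exponential decay dominates any polynomial factor, which establishes the first part of \textbf{H}$^1$). There is no genuine obstacle here: the only points requiring care are cosmetic---reconciling the prefactor $\beta$ in Proposition~\ref{faraway} with the clean tail form stated in \textbf{H}$^1$), and making sure the constant $R_0$ and the threshold $\epsilon_0$ from the two propositions are chosen compatibly (taking the common $R_0$ and the smaller $\epsilon_0$). All the hard estimation was already carried out via the level-set lemmas inside Propositions~\ref{faraway} and~\ref{mid}.
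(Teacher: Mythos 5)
Your proposal is correct and follows exactly the route the paper intends: the paper simply states that Propositions~\ref{faraway} and~\ref{mid} ``immediately yield'' the corollary, and your write-up supplies precisely those details (the decomposition of $\mathbb{R}^n\setminus\mathcal{N}$ into the intermediate and far regions, plus the cosmetic absorption of the constant $\beta$ into the exponent). No gaps.
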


\medskip
 Below, we give a simple example that satisfies {\bf H$^{0}$)}
and {\bf H$^{2}$)}.

\begin{ex}
 Consider
\begin{equation}
  \label{limitcycle}
 \left\{\begin{array}{l}
  x' = y + x(1 - x^{2} - y^{2}) \\
 y' = -x + y(1 - x^{2} - y^{2}).
\end{array}\right.
\end{equation}
Let
$$
  U(x,y) = \sqrt{x^{2} + y^{2} } h( \sqrt{x^{2} + y^{2}}) + (x^{2} + y^{2}
  - 1)^{2}(1 - h(\sqrt{x^{2} + y^{2}}))
$$
where $h(r)$ is a nonnegative nondecreasing $C^{2}$ cut-off function such that $h(r) = 0$ for
$r \leq 1.3$ and $h(r) = 1$ for $r \geq 1.4$. Then it
is easy to verify that $U(x,y)$ is of class $\mathcal{B}^{*}$ in
$\{(x,y) \, : \, \sqrt{x^{2} + y^{2}} \geq 1.4\}
$. When $\epsilon$ is sufficiently small, the other conditions in {\bf
  H$^{2}$)} and {\bf H$^{0}$)} are also satisfied.
\end{ex}

\medskip

\begin{rem}
Example 3.1 is not a biological example. The purpose of having this simple
example is to show that two Lyapunov functions can be ``glued'' to
verify {\bf H$^{2}$)}. In applications, if an ODE system has
inward-pointing vector field far away from the origin, it is usually
easy to find a Lyapunov function of class $\mathcal{B}^{*}$ in
$\mathcal{N}_{\infty}$. This Lyapunov function in
$\mathcal{N}_{\infty}$ may not have the Lyapunov property near the
attractor. On the other hand, many
ODE systems in biological models, such as mass-action systems,
admit natural Lyapunov functions \cite{horn1972general,
  gopalkrishnan2013lyapunov, feinberg1987chemical,
  feinberg1995existence}, which are not of class $\mathcal{B}^{*}$ in $\mathcal{N}_{\infty}$. Often two Lyapunov functions can be ``glued'' together
to satisfy {\bf H$^{2}$)}, which, by Corollary \ref{cor31}, rigorously leads to the desired
concentration of $\mu_{\epsilon}$ needed in the rest of this
paper. We remind readers that there are some systematic ways to propagate
``local'' Lyapunov functions to construct a global Lyapunov function
\cite{athreya2012propagating, herzog2014noise}, which can be used to check the validity of {\bf H$^{2}$)} in
applications.

\end{rem}

\medskip

\begin{ex}[Toggle switch model]
\label{ex2}

Consider a gene circuit that consists of two genes $G_{A}$ and
$G_{B}$. $G_{A}$ and $G_{B}$ produces proteins $A$ and $B$, respectively. Assume that protein $A$ can
turn off gene $G_{B}$ by binding with its promotor, and vice
versa. Once turned off, each gene turns back on at a certain rate due to the
unbinding of the protein. Let $x$ and $y$ be the concentration of $A$
and $B$, respectively. Then the time evolution of concentration can be
described by the following ordinary differential equation
\begin{eqnarray}
\label{switch}
 \frac{\mathrm{d}x}{\mathrm{d}t}& = & \frac{1}{1 + \frac{y^{2}}{b +
     x^{2}}} -  x \\\nonumber
\frac{\mathrm{d}y}{\mathrm{d}t}& = & \frac{1}{1 + \frac{x^{2}}{b +
     y^{2}}} - y \,,
\end{eqnarray}
where $b$ is a constant \cite{newby2012isolating}. It
is easy to see that equation \eqref{switch} admits two stable
equilibria $A$ and $B$ and one saddle
equilibrium $C$ (See Figure \ref{toggleswitch}).
When adding additive white noise perturbation
$\epsilon \dot W$ to both equations of \eqref{switch}, it is known
that there exists a quasi-potential function $V(x,y)$ such that
 the corresponding stationary measure $\mu_{\epsilon}$
has the WKB approximation \eqref{wkb}. Therefore, if
we let $\mathcal{N}$ be the neighborhood of $\{A, B\}$, then
 $\mu_{\epsilon}$ satisfies the concentration condition
{\bf H$^{1})$}. The quasi-potential landscape of equation
\eqref{switch} is well-studied. We refer the reader
to \cite{newby2014asymptotic} for  a numerical
computation of $V(x, y)$  (see also
\cite{lu2014construction} for a similar gene switch model and its
numerical quasi-potential landscape).

\begin{figure}[h]
\centerline{\includegraphics[width = 9cm]{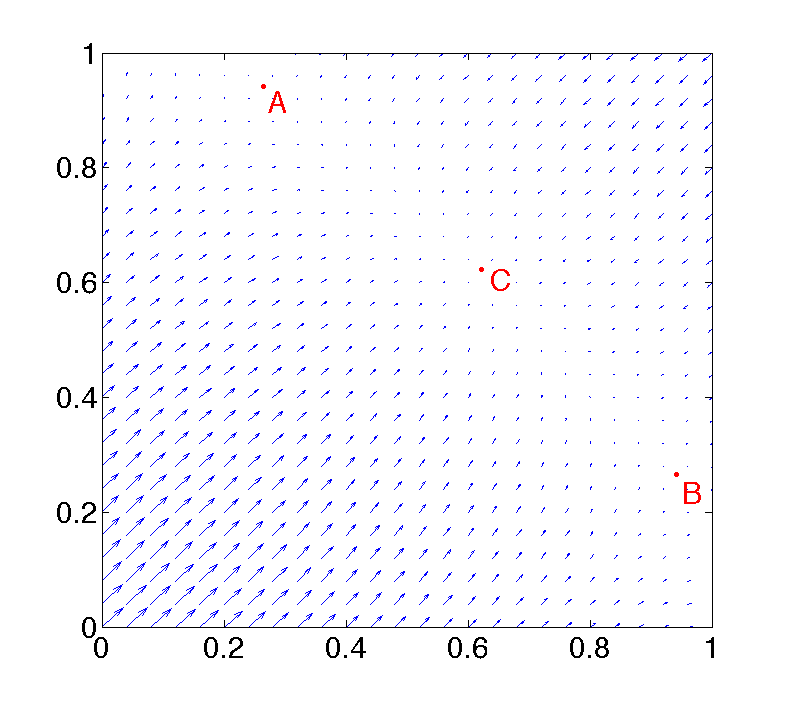}}
\caption{\label{toggleswitch} Vector field of Equation \eqref{switch}
  with $b = 0.25$. $A$ and $B$ are stable equilibriums and $C$ is a
  saddle point.}
\end{figure}

\end{ex}

\begin{rem}
 Besides the toggle switch model in Example \ref{ex2},
  many other biological models, including biochemical oscillation systems
\cite{wang2008potential}, genetic circuits \cite{wang2010potential},
gene regulatory networks \cite{huang2013escape}, and cell cycle network
\cite{lv2015energy}, are known to have similar
quasi-potential landscape, hence {\bf H$^{1}$)} is
satisfied by all these systems. Also see
\cite{zhou2012quasi, ao2007existence} for theoretical studies of
quasi-potential functions in biological systems and
\cite{zhou2008adaptive, zhou2012quasi, ren2004minimum, heymann2008geometric} for
numerical computation methods of the quasi-potential function.

\end{rem}

\subsection{Concentration of stationary measures near the global attractor}
 Let $L_1$ be as in {\bf H$^0$)} and denote $\gamma_{0}$ as
the Lyapunov constant of $W$. The following lemma is straightforward
from the $C^{2}$ smoothness of $W$.
\medskip

\begin{lem}\label{conditions} Assume {\bf H$^0$)}. Then there are
positive constants $\kappa, L_{1}, L_{2}, K_{1}, K_{2}$
 such that
\begin{eqnarray*}
  && -\kappa|\nabla W(x)|^{2} \leq \nabla W(x)\cdot f(x) \leq -\gamma_{0}
   |\nabla W(x)|^{2},\\
&&
   L_{1}\mathrm{dist}^{2}(x,\mathcal{A}) \leq W(x) \leq
   L_{2}\mathrm{dist}^{2}(x,\mathcal{A}),\\
&&
   K_{1} \mathrm{dist}(x,\mathcal{A}) \leq |\nabla W (x)| \leq K_{2}
   \mathrm{dist} (x,\mathcal{A}),
\end{eqnarray*}
$x \in \mathcal{N}$.
\end{lem}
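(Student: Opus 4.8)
The plan is to reduce each of the three displayed pairs of inequalities to the statement that a single scalar quotient ($W/\mathrm{dist}^2$, $|\nabla W|/\mathrm{dist}$, and $(\nabla W\cdot f)/|\nabla W|^2$) stays between two positive constants, and to treat separately the region near $\cal A$ and the region bounded away from it. Two facts drive everything. First, since $W\in C^2(\cal N)$ is nonnegative with $W\ge L_1\mathrm{dist}^2(\cdot,\cal A)$, it vanishes on $\cal A$, and as $\cal A$ lies in the interior of $\cal N$, each $a\in\cal A$ is a local minimum of $W$, so $W(a)=0$ and $\nabla W(a)=0$ on $\cal A$. Second, for $x$ near $\cal A$ write $a=a(x)$ for a nearest point and $r=\mathrm{dist}(x,\cal A)=|x-a|$; the second-order Taylor theorem then gives
\[
W(x)=\tfrac12(x-a)^\top D^2W(\xi)(x-a),\qquad \nabla W(x)=\Big(\int_0^1 D^2W(a+s(x-a))\,\mathrm{d}s\Big)(x-a),
\]
for some $\xi$ on the segment $[a,x]$. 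Throughout I work on a fixed compact neighborhood of $\cal A$ inside $\cal N$, on which $D^2W$ is uniformly continuous and bounded; the extension to all of $\cal N$ is routine by compactness and absorbed into the constants.

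The upper bounds are immediate from these identities: $W(x)\le L_2 r^2$ and $|\nabla W(x)|\le K_2 r$ with $L_2=\tfrac12\sup\|D^2W\|$ and $K_2=\sup\|D^2W\|$, while $\nabla W\cdot f\le-\gamma_0|\nabla W|^2$ is exactly the strong Lyapunov inequality of {\bf H$^0$)}. For the gradient lower bound $|\nabla W(x)|\ge K_1 r$, combine the Taylor identity with the hypothesis: with $\nu=(x-a)/r$ the relation $\tfrac12\nu^\top D^2W(\xi)\nu=W(x)/r^2\ge L_1$ forces $\nu^\top D^2W(\xi)\nu\ge 2L_1$, and by uniform continuity of $D^2W$ the averaged Hessian $\bar M(x)=\int_0^1 D^2W(a+s(x-a))\,\mathrm{d}s$ satisfies $\nu^\top\bar M(x)\nu\ge 2L_1-o(1)\ge L_1$ once $r$ is small. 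Hence $|\nabla W(x)|\ge|\nabla W(x)\cdot\nu|=r\,\nu^\top\bar M(x)\nu\ge L_1 r$. Away from $\cal A$, the strong Lyapunov property gives $\nabla W\ne0$ on the compact set $\{c\le r\le c'\}$, so $|\nabla W|/r$ is bounded below there by compactness, and shrinking $K_1$ yields the bound everywhere.

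The remaining inequality $-\kappa|\nabla W|^2\le\nabla W\cdot f$ is the crux, since $|f|$ does \emph{not} vanish on $\cal A$ (there $f$ is tangent to $\cal A$) whereas $|\nabla W|$ does, so the naive estimate $|\nabla W\cdot f|\le|f||\nabla W|$ is hopeless near $\cal A$. Instead I would use that $g:=-\nabla W\cdot f$ is nonnegative on $\cal N$ (by $g\ge\gamma_0|\nabla W|^2\ge0$) and vanishes on $\cal A$; thus $\cal A$ consists of interior minima of the $C^1$ function $g$, so $\nabla g=0$ on $\cal A$. Differentiating $g=-\sum_i f_i\partial_iW$ and using $\nabla W|_{\cal A}=0$ gives $\nabla g|_{\cal A}=-D^2W(a)f(a)$, whence the key identity $D^2W(a)f(a)=0$. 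Plugging the integral form of $\nabla W$ into $\nabla W(x)\cdot f(x)$ and splitting $f(x)=f(a)+(f(x)-f(a))$, the $f(a)$-term is $|f(a)|\,\omega(r)\,r$ (its leading Hessian contribution drops out precisely because $D^2W(a)f(a)=0$), where $\omega$ is the modulus of continuity of $D^2W$, while the $(f(x)-f(a))$-term is $O(r^2)$ by the Lipschitz bound on $f$. Combined with $|\nabla W(x)|\ge K_1 r$ this yields $g(x)\le\kappa|\nabla W(x)|^2$ near $\cal A$, and compactness again handles the region away from $\cal A$.

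I expect this last step to be the main obstacle. The delicate point is that the cancellation $D^2W(a)f(a)=0$ only removes the first-order part of $\nabla W\cdot f$, so closing the estimate as $g=O(r^2)$ rather than merely $o(r)$ requires a quantitative modulus $\omega(r)=O(r)$, i.e.\ a Lipschitz Hessian. In the model situations of the Remark, where $\cal A$ is an equilibrium or a limit cycle and $W$ may be taken as $\mathrm{dist}^2$ (so $D^2W$ is essentially constant and $\omega\equiv0$), this is automatic, and more generally it holds whenever $f$ and $W$ are slightly better than $C^2$. All the other bounds go through under bare $C^2$ regularity together with the single compactness argument above.
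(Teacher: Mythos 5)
The paper offers no proof of this lemma beyond the remark that it is ``straightforward from the $C^{2}$ smoothness of $W$,'' so your write-up is in effect the only proof on the table, and most of it is sound. Granting the implicit normalization $W|_{\mathcal{A}}=0$ --- note that this does \emph{not} follow, as you assert, from $W\ge L_{1}\,\mathrm{dist}^{2}(\cdot,\mathcal{A})$, which only gives $W\ge 0$ on $\mathcal{A}$; it is a normalization one must read into {\bf H$^0$)}, and without it the upper bounds $W\le L_{2}\,\mathrm{dist}^{2}$ and $|\nabla W|\le K_{2}\,\mathrm{dist}$ are simply false --- your Taylor/averaged-Hessian arguments correctly deliver the second and third displayed pairs and the right-hand half of the first, and the identity $D^{2}W(a)f(a)=0$, obtained from $\nabla g|_{\mathcal{A}}=0$ for $g=-\nabla W\cdot f$, is a genuinely useful observation that the paper never makes explicit. (One caveat you wave at but should state: all constants are produced on compact subsets of the open set $\mathcal{N}$, so one must either assume $W\in C^{2}(\overline{\mathcal{N}})$ with $\nabla W\ne 0$ up to $\partial\mathcal{N}$, or shrink $\mathcal{N}$; the paper ignores this as well.)

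The one genuine gap is exactly where you locate it: the lower bound $\nabla W\cdot f\ge -\kappa|\nabla W|^{2}$. When $\mathcal{A}$ consists of equilibria the naive estimate $|\nabla W\cdot f|\le |f|\,|\nabla W|\le (L r)(K_{2}r)$ already closes the argument, so no cancellation is needed; but when $f$ does not vanish on $\mathcal{A}$ your decomposition yields only $g(x)=O\bigl(r\,\omega(r)\bigr)+O(r^{2})$, where $\omega$ is the modulus of continuity of $D^{2}W$, and bare $C^{2}$ regularity gives $\omega(r)=o(1)$, hence $g=o(r)$ rather than the required $O(r^{2})$. The one-sided information $g\ge \gamma_{0}K_{1}^{2}r^{2}$ and the flow estimate $\mathrm{dist}(\varphi^{t}x,\mathcal{A})\ge e^{-Lt}\mathrm{dist}(x,\mathcal{A})$ do not rescue the pointwise upper bound either. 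So as written your proof establishes the first inequality only under the extra hypothesis that $D^{2}W$ is Lipschitz (or Dini with $\omega(r)\lesssim r$) near $\mathcal{A}$ --- automatic in the model case $W=\mathrm{dist}^{2}$ of the paper's Remark, but not contained in {\bf H$^0$)}. You should either add that hypothesis explicitly or find a different argument; the paper's claim that the lemma is ``straightforward'' silently skips precisely this point.
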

\medskip

Below, for any bounded set $A\subset \mathbb{R}^{n}$ and $r>0$, we
denote $B(A,r) := \{x\in \mathbb{R}^{n} \, :\, \mathrm{dist}(x,A)
\leq r \}$ as the $r$-neighborhood of $A$. The following theorems
give new estimations in the vicinity of $\mathcal{A}$.
\medskip

\begin{thm}
\label{accurate} If both {\bf H$^0$)} and {\bf H$^1$) }
hold, then   for any $0<\delta \ll 1$ there exist constants
$\epsilon_{0},M>0$  such that
\begin{displaymath}
   \mu_{\epsilon}(B( \mathcal{A},M\epsilon)) \geq 1-\delta,
\end{displaymath}
whenever $\epsilon\in (0,\epsilon_{0})$.
\end{thm}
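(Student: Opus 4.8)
The plan is to bound the complementary measure and split it into a far part and a near part:
$$
\mu_{\epsilon}(\mathbb{R}^{n}\setminus B(\mathcal{A},M\epsilon))\le \mu_{\epsilon}(\mathbb{R}^{n}\setminus\mathcal{N})+\mu_{\epsilon}(\mathcal{N}\setminus B(\mathcal{A},M\epsilon)).
$$
The far part is disposed of at once by {\bf H$^1$)}, which forces $\mu_{\epsilon}(\mathbb{R}^{n}\setminus\mathcal{N})\to 0$, so it is below $\delta/2$ once $\epsilon$ is small. All the work lies in the near part, which I would control by using the strong Lyapunov function $W$ from {\bf H$^0$)} as a Lyapunov function for the Fokker--Planck operator inside $\mathcal{N}$ and then invoking Lemma~\ref{lem41}.

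For the near part I would use the bounds of Lemma~\ref{conditions}. Writing $\mathcal{L}_{\epsilon}W=\tfrac12\epsilon^{2}\,\mathrm{tr}(A\,D^{2}W)+f\cdot\nabla W$, the $C^{2}$ regularity of $W$ and boundedness of $A$ on the compact closure $\overline{\mathcal{N}}$ give $\tfrac12\epsilon^{2}\,\mathrm{tr}(A\,D^{2}W)\le C_{3}\epsilon^{2}$, while $f\cdot\nabla W\le-\gamma_{0}|\nabla W|^{2}\le-\gamma_{0}K_{1}^{2}\,\mathrm{dist}^{2}(x,\mathcal{A})$. Hence on $\Gamma_{\rho}(W)$, where $\mathrm{dist}^{2}\ge\rho/L_{2}$,
$$
\mathcal{L}_{\epsilon}W\le-\frac{\gamma_{0}K_{1}^{2}}{L_{2}}\rho+C_{3}\epsilon^{2}.
$$
Choosing the essential lower bound $\rho_{m}=a\epsilon^{2}$ with $a>C_{3}L_{2}/(\gamma_{0}K_{1}^{2})$ fixed, $W$ becomes (for $\epsilon$ small enough that $\Omega_{\rho_{m}}(W)\subset\mathcal{N}$) a Lyapunov function for \eqref{FPE1} on $\mathcal{N}$ with Lyapunov constant $\gamma=(\gamma_{0}K_{1}^{2}a/L_{2}-C_{3})\epsilon^{2}=:c\,\epsilon^{2}$. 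On $\Gamma_{\rho}(W)$ one also has $\tfrac12\epsilon^{2}\sum a_{ij}\partial_{i}W\partial_{j}W\le\tfrac12\epsilon^{2}\Lambda|\nabla W|^{2}\le C_{5}\epsilon^{2}\rho$, with $\Lambda$ bounding the operator norm of $A$ on $\overline{\mathcal{N}}$ and $C_{5}=\Lambda K_{2}^{2}/(2L_{1})$, using $\mathrm{dist}^{2}\le\rho/L_{1}$; so I may take $H(\rho)=C_{5}\epsilon^{2}\rho$ in Lemma~\ref{lem41}.

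The crucial step is the choice of the outer level. Since $W\ge L_{1}\,\mathrm{dist}^{2}$, the sublevel set $\Omega_{\tilde\rho}(W)$ with $\tilde\rho=L_{1}M^{2}\epsilon^{2}$ is contained in $B(\mathcal{A},M\epsilon)$, so $\mathcal{N}\setminus B(\mathcal{A},M\epsilon)$ is covered by $\Omega_{\rho^{0}}(W)\setminus\Omega_{\tilde\rho}(W)$ together with the thin shell $\mathcal{N}\setminus\Omega_{\rho^{0}}(W)$, where $\rho^{0}=\sup\{\rho:\Omega_{\rho}(W)\subset\mathcal{N}\}$; the shell has measure $\le e^{-C_{1}/\epsilon^{2}}$ by Lemma~\ref{lem31} and is negligible. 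Applying Lemma~\ref{lem41} to $\mu_{\epsilon}|_{\mathcal{N}}$ between $\rho_{m}$ and $\tilde\rho$ (and using $\mu_{\epsilon}(\mathcal{N})\le1$) yields
$$
\mu_{\epsilon}\big(\Omega_{\rho^{0}}(W)\setminus\Omega_{\tilde\rho}(W)\big)\le\exp\!\Big(-\gamma\!\int_{\rho_{m}}^{\tilde\rho}\frac{dt}{H(t)}\Big)=\exp\!\Big(-\frac{c\,\epsilon^{2}}{C_{5}\epsilon^{2}}\log\frac{\tilde\rho}{\rho_{m}}\Big)=\Big(\frac{a}{L_{1}M^{2}}\Big)^{c/C_{5}}.
$$
Here lies the heart of the matter and the main obstacle: the $\epsilon^{2}$ dissipation rate near $\mathcal{A}$ cancels exactly against the $\epsilon^{2}$ in the diffusion term $H$, so the exponent is an $\epsilon$-independent constant and the bound becomes a genuine power of $M$. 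I would therefore first fix $M$ large enough that this is $<\delta/2$, and then choose $\epsilon_{0}$ small enough that $\tilde\rho<\rho^{0}$, that $\Omega_{\rho_{m}}(W)\subset\mathcal{N}$, that the shell term and the far term are each absorbed; combining the two halves gives $\mu_{\epsilon}(B(\mathcal{A},M\epsilon))\ge1-\delta$ for all $\epsilon\in(0,\epsilon_{0})$. The applicability of Lemma~\ref{lem41} is guaranteed since $W$ is a strong Lyapunov function, so $\nabla W\ne0$ off $\mathcal{A}$ and every level set $\Gamma_{\rho}(W)$, $\rho>0$, avoids $\mathcal{A}$.
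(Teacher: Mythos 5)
Your proof is correct, but it follows a genuinely different route from the paper's. The paper does not apply Lemma~\ref{lem41} at all in the inner region: it builds a $C^{2}$ cut-off $\phi(W)$, feeds it into the integral identity of Theorem~\ref{lem23}, and extracts the differential inequality $\gamma_{0}C_{1}F(\rho)+2\epsilon^{2}\bar{\lambda}F'(\rho)\le 0$ for $F(\rho)=\mu_{\epsilon}(\Omega_{\rho_{0}}(W)\setminus\Omega_{\rho}(W))$, which by Gronwall gives $F(\rho)\le F(\rho_{1})e^{-\frac{\gamma_{0}C_{1}}{2\epsilon^{2}\bar{\lambda}}(\rho-\rho_{1})}$ with $\rho_{1}=L_{2}M_{1}\epsilon^{2}$. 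You instead apply Lemma~\ref{lem41} directly with $\epsilon$-dependent data $\rho_{m}=a\epsilon^{2}$, $\gamma=c\epsilon^{2}$ and, crucially, the level-dependent majorant $H(\rho)=C_{5}\epsilon^{2}\rho$ rather than a constant one; the logarithmic divergence of $\int_{a\epsilon^{2}}^{L_{1}M^{2}\epsilon^{2}}\rho^{-1}\,d\rho$ then exactly compensates the $O(\epsilon^{2})$ degeneration of the Lyapunov constant, and all hypotheses of Lemma~\ref{lem41} (compactness of $W$ on $\Omega_{\rho^{0}}(W)$, nonvanishing of $\nabla W$ off $\mathcal{A}$, local integrability of $1/H$) are satisfied for each fixed $\epsilon$. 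This is a legitimate and rather elegant shortcut; it is worth noting that the remark following Theorem~\ref{accurate} claims the theorem ``does not follow from Lemma~\ref{lem41} directly'' precisely because the Lyapunov constant degenerates near $\mathcal{A}$ --- your choice of $H$ shows that objection only rules out the naive application with a constant $H$. What the shortcut costs is quantitative sharpness: your tail bound is the power law $(a/(L_{1}M^{2}))^{c/C_{5}}$, so $M$ must grow like a negative power of $\delta$, whereas the paper's exponential estimate \eqref{c4} gives $M\sim\sqrt{-\log\delta}$ (as recorded in the remark) and, more importantly, is reused verbatim as \eqref{c55} in the proof of Theorem~\ref{MSDbounds}, where a power-law decay of $F$ would not in general suffice to bound $\int\rho\,(-F'(\rho))\,d\rho$ by $O(\epsilon^{2})$ unless $c/C_{5}>1$. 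So for the statement at hand your argument is complete; it just does not produce the stronger intermediate inequality the paper extracts along the way.
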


\begin{proof}  Fix a $\rho_0>0$ such that $\Omega_{\rho_0}(W)\subset \cal N$ and
$\Gamma_{\rho_0}\cap \cal A =  \emptyset$. Then by Lemma ~\ref{lem31}
there are constants $\epsilon_0,C_1>0$ such that $\mu_{\epsilon}(
\mathcal{N}\setminus \Omega_{\rho_0}(W)) < e^{-C_{1}/\epsilon^{2}}$,
$\epsilon\in (0,\epsilon_0)$. Since by {\bf H$^1$)},   $\mu_{\epsilon}(
\mathbb{R}^{n} \setminus \mathcal{N})  = o(\epsilon^{2})$,
$0<\epsilon\ll 1$,  we only need to estimate
$\mu_\epsilon(\Omega_{\rho_0}(W))$.

\medskip

For any given $0<\tilde\rho_{0}<\rho_0$ and any $0<\Delta \rho \ll
1$, consider the following  $C^{2}$ cut-off function
\begin{displaymath}
   \phi(\rho) = \left\{\begin{array}{ll}
0,&\rho\le\tilde\rho_{0}\\
\frac{3}{\Delta\rho^{4}}(\rho - \tilde\rho_{0})^{5} -
\frac{8}{\Delta\rho^{3}}(\rho - \tilde\rho_{0})^{4}+
\frac{6}{\Delta\rho^{2}}(\rho - \tilde\rho_{0})^{3},&\tilde\rho_{0}<\rho<\tilde\rho_{0}+\Delta\rho\\
\rho - \tilde\rho_{0},&\rho\ge\tilde\rho_{0}+\Delta\rho.
\end{array}\right .
\end{displaymath}

Let $u=u_\epsilon $ be a density function of
$\mu_\epsilon$. It follows from Theorem ~\ref{lem23} that
\begin{eqnarray}
 &&\int_{\Omega_{\rho_0}(W)}\phi'(W(x))\left (\sum_{i,j=1}^{n} \frac{1}{2}\epsilon^{2}a_{ij}(x)\partial^{2}_{ij}W(x)
   +
   \sum_{i=1}^{n}f_{i}(x)\partial_{i}W(x)\right
 )u(x)\mathrm{d}x \\
&&\quad +
   \int_{\Omega_{\rho_0}(W)}\phi''(W(x))\left (\sum_{i,j=1}^{n}\frac{1}{2}\epsilon^{2}a_{ij}(x)
   \partial_{i}W(x)\partial_{j}W(x)\right )u(x)\mathrm{d}x\nonumber\\
   &=&\int_{\Gamma_{\rho_0}(W)}\left (\sum_{i=1}^{n}\sum_{j=1}^{n}\frac{1}{2}\epsilon^{2}a_{ij}(x)\partial_{i}W(x)
   \nu_{j}(x)\right )u(x)\mathrm{d}x \geq 0, \label{eq3-1}
\end{eqnarray}
 where  $\nu_{j} = \frac{\partial
  W_{j}}{|\nabla W|}$ for each $j$.

To estimate the first term on the left hand side of \eqref{eq3-1},
we note by  definition of $\phi(\rho)$ that
\begin{eqnarray*}
  && \int_{\Omega_{\rho_0}(W)}\phi'(W(x))\left (\sum_{i,j=1}^{n}\frac{1}{2}\epsilon^{2}a_{ij}(x)\partial^{2}_{ij}W(x)
   +
   \sum_{i=1}^{n}f_{i}(x)\partial_{i}W(x)\right
 )u(x)\mathrm{d}x \\
&=&
   \int_{\Omega_{\rho_0}(W)\backslash
     \Omega_{\tilde\rho_{0}(W)}}\phi'(W(x))\left (\sum_{i,j=1}^{n}\frac{1}{2}\epsilon^{2}a_{ij}(x)\partial^{2}_{ij}W(x)
   +
   \sum_{i=1}^{n}f_{i}(x)\partial_{i}W(x)\right
 )u(x)\mathrm{d}x.
\end{eqnarray*}

 Denote
\begin{eqnarray*}
   \bar{\sigma} &=& n^{2}\max_{1\le i,j\le n, x \in U_{\rho_0(W)}}
   |a_{ij}(x)|,\\
  D &=& \max_{1\le i,j\le n, x \in U_{\rho_0}(W)} \partial^{2}_{ij}W(x)
\end{eqnarray*}
and let $M_{1} = D\bar{\sigma}/\gamma_{0} K^{2}_{1}$,  where
$\gamma_{0}$ and $K_{1}$ are constants in Lemma~\ref{conditions}.
Let $\epsilon_{1}>0 $ be such that
$B(\mathcal{A},\sqrt{M_{1}}\epsilon) \subset \Omega_{\rho_0}(W)$ for
all $0<\epsilon<\epsilon_{1}$. Then
\begin{displaymath}
    \sum_{i,j=1}^{n}\frac{1}{2}\epsilon^{2}a_{ij}(x)\partial^{2}_{ij}W(x) \leq \frac{\gamma_{0}}2 |\nabla
    W(x)|^2,
\end{displaymath}
for all $x$ with $\mathrm{dist}(x,\mathcal{A})>\sqrt{M_{1}}\epsilon$
and $\epsilon\in (0,\epsilon_1)$. It follows from the property of
strong Lyapunov function that
\[
   \sum_{i,j=1}^{n}\frac{1}{2}\epsilon^{2}a_{ij}(x)\partial^{2}_{ij}W(x)  +
   \sum_{i=1}^{n}f_{i}(x)\partial_{i}W(x) \leq -\frac{1}{2}\gamma_{0} |\nabla W(x)|^{2}
\]
for all $x$ with $\mathrm{dist}(x,\mathcal{A})>\sqrt{M_{1}}\epsilon$
and $\epsilon\in (0,\epsilon_1)$. Let ${\cal
D}_0=\Omega_{\rho_{0}}(W)\backslash
\Omega_{\tilde\rho_{0}+\Delta\rho}$ and $\cal
D=\Omega_{\tilde\rho_{0}+\Delta\rho}(W)\backslash
\Omega_{\tilde\rho_{0}}$.  By Lemma~\ref{conditions}, we also have
\begin{eqnarray*}
   \min_{{\cal D}_0} |\nabla W(x)|^{2} &\geq&  K_{1}^2 \min_{{\cal D}_0}
   \mathrm{dist}^2(x,\mathcal{A}) \ge \frac{K_1^2}{L_2}\min_{{\cal D}_0} W(x)
   =\frac{K_1^2}{L_2}(\tilde\rho_0+\Delta\rho)\\\nonumber
&\ge& \frac{K_1^2}{L_2}\max_D W(x)
   \ge \frac{K_1^2L_1}{L_2}\max_{\cal D}\mathrm{dist}^2 (x,\mathcal{A})\\\nonumber
   &\ge& \frac{K_1^2L_1}{K_2^2L_2}
    \max_{\cal D} |\nabla W(x)|^{2}=: C_1 \max_{\cal D} |\nabla
    W(x)|^{2},
    \end{eqnarray*}
where $K_{1}, K_{2}, L_{1}, L_{2}$ are as in Lemma~\ref{conditions}.
Therefore,
\begin{eqnarray}
&&\int_{\Omega_{\rho_0}(W)\backslash
\Omega_{\tilde\rho_{0}}(W)}\phi'(W(x))\left (\sum_{i,j=1}^{n}\frac{1}{2}\epsilon^{2}a_{ij}(x)\partial^{2}_{ij}W(x)
+
\sum_{i=1}^{n}f_{i}(x)\partial_{i}W(x)\right
)u(x)\mathrm{d}x \\\nonumber
&\leq & -\gamma_{0}\int_{\Omega_{\rho_0}(W)\backslash
  \Omega_{\tilde\rho_{0}}(W)}\phi'(W(x))|\nabla
W(x)|^{2}u(x)\mathrm{d}x\\\nonumber
& \leq& -\gamma_{0} \int_{\Omega_{\rho_0}(W)\backslash
   \Omega_{\tilde\rho_{0}+\Delta\rho}(W)} |\nabla W(x)|^{2} u(x) \mathrm{d}x\nonumber\\
&\le&
 -\gamma_{0}\min_{{\cal D}_0} |\nabla W(x)|^{2}\int_{\Omega_{\rho_0}(W)\backslash
   \Omega_{\tilde\rho_{0}+\Delta\rho}(W)} u(x) \mathrm{d}x\nonumber\\
 &\leq&-\gamma_{0} C_{1}\max_{\cal D} |\nabla W(x)|^{2}\mu_\epsilon(\Omega_{\rho_0}(W)\backslash
   \Omega_{\tilde\rho_{0}+\Delta\rho}(W)). \nonumber\label{c1}
\end{eqnarray}

Note that $|\phi''(x)| \leq 4$. The second term on the left hand side of \eqref{eq3-1} simply
satisfies the following estimate:
\begin{eqnarray}
&&\int_{\Omega_{\rho_0}(W)}\phi''(W(x))\sum_{i,j=1}^{n}\frac{1}{2}\epsilon^{2}a_{ij}(x)\partial_{i}W(x)\partial_{j}W(x)
u\mathrm{d}x\nonumber\\
& = & \int_{\Omega_{\tilde\rho_{0}+\Delta\rho}(W)\backslash
\Omega_{\tilde\rho_{0}}(W)}
\phi''(W(x))\sum_{i,j=1}^{n}\frac{1}{2}\epsilon^{2}a_{ij}(x)\partial_{i}W(x)\partial_{j}W(x)
u
\mathrm{d}x\nonumber\\
&\leq &\frac{2}{\Delta\rho}\epsilon^{2}\bar{\lambda}\max_{\cal D}
{|\nabla
W(x)|^{2}\mu_\epsilon(\Omega_{\tilde\rho_{0}+\Delta\rho}\backslash
\Omega_{\tilde\rho_{0}}}),\label{c2}
\end{eqnarray}
where $\bar{\lambda}=\sup_{x \in \Omega_{\rho_0}(W)}\lambda_M(x)$
with $\lambda_M(x)$ being the largest eigenvalue of matrix $A(x)$
for each $x \in \Omega_{\rho_0}(W)$.

It now follows from \eqref{eq3-1}-\eqref{c2} that
\begin{equation}\label{c3}
   -\gamma_{0} C_{1} \mu(\Omega_{\rho_0}(W)\backslash
   \Omega_{\tilde\rho_{0}+\Delta\rho}(W)) +
   \frac{2}{\Delta\rho}\epsilon^{2}\bar{\lambda}\mu(\Omega_{\tilde\rho_{0}+\Delta\rho}(W)\backslash
   \Omega_{\tilde\rho_{0}}(W))\geq 0.
\end{equation}

Let $\rho_{1} = L_{2}M_{1}\epsilon^{2}$.  We have by
Lemma \ref{conditions} that
$B(\mathcal{A},\sqrt{M_{1}}\epsilon)\subset \Omega_{\rho_{1}}$.
Consider function $F(\rho) =
\mu_{\epsilon}(\Omega_{\rho_0}(W)\backslash \Omega_{\rho}(W))$,
$\rho\in [\rho_{1}, \rho_0]$. Since $\tilde\rho_0$ is arbitrary,
\eqref{c3} with $\rho$ in place of $\tilde\rho_0$ becomes
\[
   -\gamma_{0} C_{1}F(\rho+\Delta\rho) +
   \frac{2}{\Delta\rho}\epsilon^{2}\bar{\lambda}(F(\rho) - F(\rho + \Delta\rho))
   \geq 0,\qquad  \rho\in [\rho_{1}, \rho_0].
\]
Taking  limit $\Delta\rho \rightarrow 0$ in the above yields
\begin{displaymath}
   \gamma_{0} C_{1}F(\rho) + 2\epsilon^{2}\bar{\lambda}F'(\rho)\le 0.
\end{displaymath}
Hence, by  Gronwall's inequality, we have
\begin{equation}\label{c4}
   F(\rho) \leq F(\rho_{1})e^{-\frac{\gamma_{0}
       C_{1}}{2\epsilon^{2}\bar{\lambda}}(\rho-\rho_{1})}, \quad \rho \in
   [\rho_{1}, \rho_0]\,.
\end{equation}

For a given sufficiently small $\delta>0$, we let
\begin{displaymath}
   M_{2} = L_{2}M_{1} - \frac{2\bar{\lambda}}{\gamma_{0}} C_{1}\log \frac{\delta}{2} \,.
\end{displaymath}
Then it is easy to see from \eqref{c4} that
\begin{equation}\label{c5}
   F(M_{2}\epsilon^{2}) = \mu_{\epsilon}(\Omega_{\rho_0}(W)) - \mu_{\epsilon}(\Omega_{M_{2}\epsilon^{2}}(W))
   \leq \frac{\delta}{2},
\end{equation}
whenever $\epsilon<\epsilon_{2} =: \frac{\rho_{0}}{M_{2}}$.

 Since $\mu_{\epsilon}(\mathbb{R}^{n}\setminus
\Omega_{\rho_0}(W)) < e^{-C_{0}/\epsilon^{2}} + o( \epsilon^{2})$,
there exists an $\epsilon_{3} > 0$ such that
$\mu_{\epsilon}(\mathbb{R}^{n}\setminus \Omega_{\rho_0}(W)) <
\frac{\delta}{2}$,  i.e., $\mu_{\epsilon}(\Omega_{\rho_0}(W))
>1- \frac{\delta}{2}$, for all $\epsilon \in (0, \epsilon_{3})$.
Hence by \eqref{c5},
\begin{displaymath}
   \mu_{\epsilon}(\Omega_{M_{2}\epsilon^{3}}(W)) \geq 1-\delta,\quad 0<\epsilon<\epsilon_0,
   \end{displaymath}
where $\epsilon_{0} = \min \{\epsilon_{1},\epsilon_{2},\epsilon_{3}
\}$.

 Let $M = \sqrt{\frac{M_{2}}{L_{1}}}$.  Then by
 Lemma~\ref{conditions},
\begin{displaymath}
   \Omega_{M_{2}\epsilon^{2}}(W)\subset B( \mathcal{A},M\epsilon),
\end{displaymath}
and therefore,
\begin{displaymath}
   \mu_{\epsilon}(B( \mathcal{A}, M\epsilon))\geq \mu_{\epsilon}(\Omega_{M_{2}\epsilon^{2}}(W)) \geq
   1-\delta,\quad 0<\epsilon<\epsilon_{0}.
\end{displaymath}
This completes the proof.
\end{proof}
\medskip

\begin{rem}
1) From the proof of Theorem ~\ref{accurate},  one sees that the
constant $M$ grows in a logarithm rate as $\delta$ decreases. In
fact, for a fixed small $\epsilon$, we have
\begin{displaymath}
   \lim_{\delta\rightarrow 0}\frac{M}{\sqrt{-\log \delta}} = C
\end{displaymath}
for some finite constant $C$.

2) Theorem ~\ref{accurate} does not follow from Lemma ~\ref{lem41}
directly simply because for any constant $M>0$ the Lyapunov constant
of $W$ in the set $\Omega_{M\epsilon^2}(W)$ becomes O($\epsilon)$
instead of being O(1).

\end{rem}

Next we estimate the lower bound of concentration of $\mu_{\epsilon}$.

\begin{lem}\label{A1} There is a constant $r_0 > 0$ such that $\sum_{i,j = 1}^{n}a_{ij}(x)W_{ij}(x)$ is
uniformly positive in $ B( \mathcal{A}, r_0)$.
\end{lem}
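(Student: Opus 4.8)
The plan is to reduce the uniform positivity on a neighborhood $B(\mathcal{A},r_0)$ to strict positivity at each point of $\mathcal{A}$, and then to establish the latter by exploiting the quadratic lower bound $W(x)\ge L_1\mathrm{dist}^2(x,\mathcal{A})$ from \textbf{H$^0$)}. Write $m(x):=\sum_{i,j=1}^n a_{ij}(x)W_{ij}(x)$; since $W\in C^2$ and $A=(a_{ij})\in C^1$, the function $m$ is continuous on $\mathcal{N}$. If I can show $m(x_0)>0$ for every $x_0\in\mathcal{A}$, then, $\mathcal{A}$ being compact, $\min_{\mathcal{A}}m=:2c_0>0$, and a standard continuity/compactness argument (taking a contradicting sequence $x_k$ with $\mathrm{dist}(x_k,\mathcal{A})\to 0$ and $m(x_k)\le c_0$, then passing to a limit point in $\mathcal{A}$) yields an $r_0>0$ with $m\ge c_0$ on $B(\mathcal{A},r_0)$. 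So the entire statement reduces to pointwise positivity on $\mathcal{A}$.

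Next, fix $x_0\in\mathcal{A}$. From $L_1\mathrm{dist}^2(x,\mathcal{A})\le W(x)\le L_2\mathrm{dist}^2(x,\mathcal{A})$ in Lemma~\ref{conditions} one sees that $W\ge 0$ with $W(x)=0$ exactly on $\mathcal{A}$; thus $\mathcal{A}=\{W=0\}$ is the global minimum set of $W$. Consequently $\nabla W(x_0)=0$ and the Hessian $H:=\mathrm{Hess}\,W(x_0)$ is positive semidefinite. Because $A(x_0)$ is positive definite, I would write $m(x_0)=\mathrm{tr}(A(x_0)H)=\mathrm{tr}(A(x_0)^{1/2}HA(x_0)^{1/2})\ge \lambda_{\min}(A(x_0))\,\mathrm{tr}(H)$, which shows that $m(x_0)>0$ is equivalent to $H\neq 0$ (a positive semidefinite matrix of zero trace must vanish). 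Hence everything comes down to proving $H=\mathrm{Hess}\,W(x_0)\ne 0$.

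To produce a nonzero direction of $H$, I would use the quadratic lower bound transversally to $\mathcal{A}$. Choosing a unit vector $v$ for which $\mathrm{dist}(x_0+tv,\mathcal{A})\ge \kappa|t|$ for all small $t$ and some $\kappa=\kappa(x_0)>0$ (a direction transverse to $\mathcal{A}$ at $x_0$), the one-variable function $g(t)=W(x_0+tv)$ satisfies $g(0)=0$, $g'(0)=\nabla W(x_0)\cdot v=0$, and $g(t)\ge L_1\kappa^2 t^2$, so Taylor's formula forces $v^{\top}Hv=g''(0)\ge 2L_1\kappa^2>0$; in particular $H\neq 0$ and $m(x_0)>0$. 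The main obstacle is precisely the existence of such a transverse direction: when $\mathcal{A}$ is an equilibrium, a limit cycle, or more generally a $C^1$ submanifold (the situation highlighted in the Remark following \textbf{H$^0$)}), one simply takes $v$ in the normal space with $\kappa$ close to $1$, whereas for a general compact attractor this amounts to controlling the tangent cone of $\mathcal{A}$ at $x_0$, which is where the regularity of $\mathcal{A}$ must be invoked. Granting the transverse direction at every point of $\mathcal{A}$, the pointwise positivity $m(x_0)\ge \lambda_{\min}(A(x_0))\cdot 2L_1\kappa(x_0)^2>0$ holds, and by the first paragraph it propagates to a uniform positive lower bound on $B(\mathcal{A},r_0)$, completing the proof.
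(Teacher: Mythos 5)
Your proof follows essentially the same route as the paper's: positive semidefiniteness of $\mathrm{Hess}\,W$ at points of $\mathcal{A}$ (from $W\ge 0$ with minimum on $\mathcal{A}$), the trace inequality $\mathrm{tr}(A(x_0)H)\ge \lambda_{\min}(A(x_0))\,\mathrm{tr}(H)$, the quadratic lower bound $W\ge L_1\mathrm{dist}^2(\cdot,\mathcal{A})$ to force a positive eigenvalue of the Hessian, and finally continuity plus compactness to get a uniform bound on a neighborhood. The transversality point you flag (a direction $v$ with $\mathrm{dist}(x_0+tv,\mathcal{A})\ge\kappa|t|$) is precisely the step the paper leaves implicit in the phrase ``Taylor expansion of $W$ at $x_0$ shows that at least one eigenvalue of $\mathbf{H}(x_0)$ must be positive,'' so your account is, if anything, the more explicit version of the same argument.
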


\begin{proof}
First we note that the Hessian matrix $\mathbf{H}(x) := (
W_{ij}(x))$ of $W(x)$ must be  positive semidefinite for all $x \in
\mathcal{A}$. For otherwise, there is $x_0\in \cal A$ such that
$\mathbf{H}(x_0)$ has negative eigenvalue.  It then follows from the
$C^{2}$ smoothness of $W(x)$ that $W(x)$ must take a negative value
at some $x\in \cal N$ where $x-x_{0}$ is an eigenvector
corresponding to the negative eigenvalue of
$\mathbf{H}(x_0)$. This is a contradiction because $W(x)$ must be
everywhere non-negative in $\cal N$.

Since $A(x)=(a_{ij}(x))$ is everywhere positive definite in
$\bar{\cal N}$, all its eigenvalues in $\cal N$ are bounded below by
a positive constant $\lambda_0$. For any $x_0\in \cal A$, since
$W(x) \geq L_{1} \mathrm{dist}(x, \mathcal{A})^{2}$, $x\in \cal N$,
Taylor expansion of $W$ at $x_0$ shows that at least one eigenvalue
of $\mathbf{H}(x_0)$ must be  positive. Consequently,
$$
  \sum_{i,j = 1}^{n}a_{ij}(x_{0}) W_{ij}(x_0) = \mathrm{trace}
  (A(x_0)\mathbf{H}(x_0)) \ge \lambda_0 \mathrm{trace}
  (\mathbf{H}(x_0))>0.
$$
The proposition simply follows from the continuity of $\sum_{i,j =
1}^{n}a_{ij}(x) W_{ij}(x)$.

\end{proof}

\begin{thm}
\label{levelalpha} If both {\bf  H$^0$)} and {\bf H$^1$)} hold, then
$$
  \lim_{\epsilon\rightarrow 0} \mu_{\epsilon}(\{x : \,
  \epsilon^{1+\alpha}\leq \mathrm{dist}(x, \mathcal{A}) \leq
  \epsilon^{1-\alpha}  \}) = 1
$$
for any $0<\alpha <1$.
\end{thm}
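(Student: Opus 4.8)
The plan is to show the complement of the annulus has vanishing mass, split into an outer piece $\{\mathrm{dist}(x,\mathcal A)>\epsilon^{1-\alpha}\}$ and an inner piece $\{\mathrm{dist}(x,\mathcal A)<\epsilon^{1+\alpha}\}$. The outer piece is immediate from Theorem~\ref{accurate}: given $0<\delta\ll1$, for small $\epsilon$ we have $M\epsilon<\epsilon^{1-\alpha}$, so $\{\mathrm{dist}>\epsilon^{1-\alpha}\}\subset\mathbb R^n\setminus B(\mathcal A,M\epsilon)$ and thus $\mu_\epsilon(\mathrm{dist}>\epsilon^{1-\alpha})\le\delta$; letting $\delta\to0$ forces this term to $0$. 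All the work is in the inner piece, where Lemma~\ref{A1} is the decisive tool.

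For the inner estimate I would set $G(\rho)=\mu_\epsilon(\Omega_\rho(W))$ and note, via the two-sided bound $W\le L_2\,\mathrm{dist}^2$ of Lemma~\ref{conditions}, that $\{\mathrm{dist}<\epsilon^{1+\alpha}\}\subset\Omega_{\rho_1}(W)$ with $\rho_1:=L_2\epsilon^{2(1+\alpha)}$, so it suffices to bound $G(\rho_1)$. I apply the integral identity of Theorem~\ref{lem23} with $F=W$ on $S=\Omega_\rho(W)$ (on which $W\equiv\rho$ is constant). The boundary term equals $\tfrac12\epsilon^2\int_{\Gamma_\rho(W)}\bigl(\sum_{i,j}a_{ij}W_iW_j/|\nabla W|\bigr)u\,ds\ge0$ by positive definiteness of $A$. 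In the interior, setting $c_0:=\inf_{B(\mathcal A,r_0)}\sum_{i,j}a_{ij}W_{ij}>0$ (Lemma~\ref{A1}), for $\epsilon$ small the diffusion term dominates the drift: $\tfrac12\epsilon^2\sum a_{ij}W_{ij}\ge\tfrac12\epsilon^2c_0$, whereas $f\cdot\nabla W\ge-\kappa|\nabla W|^2\ge-\kappa K_2^2\rho/L_1$ by Lemma~\ref{conditions}. Choosing $\rho_2:=\tfrac{c_0L_1}{4\kappa K_2^2}\epsilon^2$ makes the interior integrand $\ge\tfrac14\epsilon^2c_0$ throughout $\Omega_\rho(W)$ for every $\rho\le\rho_2$.

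Combining the two sides, the factor $\epsilon^2$ cancels and I obtain $\tfrac{c_0}{2}G(\rho)\le\bar\lambda\int_{\Gamma_\rho(W)}|\nabla W|\,u\,ds$, where $\bar\lambda$ bounds the largest eigenvalue of $A$ on the region. Using $|\nabla W|^2\le K_2^2\,\mathrm{dist}^2\le (K_2^2/L_1)\rho$ on $\Gamma_\rho(W)$ together with the derivative formula $G'(\rho)=\int_{\Gamma_\rho(W)}u/|\nabla W|\,ds$ of Theorem~\ref{deri}, this becomes the $\epsilon$-independent differential inequality $G(\rho)\le C\rho\,G'(\rho)$ on $[\rho_1,\rho_2]$ with $C=2\bar\lambda K_2^2/(c_0L_1)$. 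Since $u_\epsilon>0$ gives $G>0$, integrating $(\log G)'\ge\theta/\rho$ with $\theta:=1/C$ yields $G(\rho_1)\le G(\rho_2)(\rho_1/\rho_2)^\theta\le(\rho_1/\rho_2)^\theta$. As $\rho_1/\rho_2$ is a fixed constant times $\epsilon^{2\alpha}$, I conclude $G(\rho_1)\le C'\epsilon^{2\alpha\theta}\to0$, which finishes the inner estimate and hence the theorem.

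I anticipate the main obstacle to be the interior sign analysis: one must verify that near $\mathcal A$ the positive second-order diffusion term, of exact order $\epsilon^2$ by Lemma~\ref{A1}, genuinely dominates the degenerating drift $f\cdot\nabla W=O(\mathrm{dist}^2)=O(\rho)$, and—equally important—that after cancelling $\epsilon^2$ every surviving constant, in particular $\theta$, is independent of $\epsilon$; otherwise the final power of $\epsilon$ collapses. A minor technical point is to ensure $\nabla W\ne0$ on each $\Gamma_\rho(W)$ with $\rho\in(\rho_1,\rho_2)$, so that Theorem~\ref{deri} applies; this holds because these level sets avoid $\mathcal A$ and $W$ is a strong Lyapunov function.
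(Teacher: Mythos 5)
Your proposal is correct and follows essentially the same route as the paper's proof: the outer region is handled by Theorem~\ref{accurate}, and the inner region by combining Lemma~\ref{A1} (uniform positivity of the diffusion term near $\mathcal A$), Lemma~\ref{conditions} (to control the drift and $|\nabla W|$ on level sets), and Theorems~\ref{lem23} and~\ref{deri} to derive exactly the paper's differential inequality $G(\rho)\le C\rho\,G'(\rho)$ on $(0,C_2\epsilon^2]$ with $C$ independent of $\epsilon$. The only cosmetic difference is that you integrate this inequality directly to get the quantitative bound $G(\rho_1)\le(\rho_1/\rho_2)^{\theta}=O(\epsilon^{2\alpha\theta})$, whereas the paper phrases the same integration as a contradiction with $f(\rho)\le1$.
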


\begin{proof} By Lemma ~\ref{conditions}, there is a constant
  $\epsilon_{0}>0$ such that both $\{x \, | \mathrm{dist}(x,
  \mathcal{A}) \leq \epsilon^{1-\alpha} \} \subseteq
  \Omega_{\epsilon^{2-\alpha}(W)}$ and $\{x \, | \mathrm{dist}(x,
  \mathcal{A}) \leq \epsilon^{1+\alpha} \} \supseteq
  \Omega_{\epsilon^{2+3\alpha}(W)}$ hold for all $\epsilon \in (0, \epsilon_{0})$. Thus it suffices to prove that
\begin{eqnarray}
&&\lim_{\epsilon\rightarrow 0}\mu_{\epsilon}(\Omega_{\epsilon^{2+\delta}}(W)) =
0,\label{c12}\\
 &&\lim_{\epsilon\rightarrow
   0}\mu_{\epsilon}(\Omega_{\epsilon^{2-\delta}}(W)) = 1.\label{c13}
\end{eqnarray}
for any fixed $\delta > 0$.

\medskip

Equation \eqref{c13} follows from Theorem ~\ref{accurate}
immediately. We will prove equation \eqref{c12}.

\medskip

Fix a $\rho_{0} > 0$ such that $\Omega_{\rho_{0}}(W) \subseteq
\mathcal{N}$ and $\Gamma_{\rho_{0}} \cap \mathcal{A} =
\emptyset$. Consider $f(\rho) = \mu_{\epsilon}( \Omega_{\rho}(W))$ for
$\rho \in [0, \rho_{0}]$. Assume, for the sake of contradiction, that there
is a constant $\sigma > 0$ such that $f(\epsilon^{2+\delta}) \geq
\sigma > 0$ for any sufficient small $\epsilon > 0$.

\medskip

\medskip
We have by Theorem ~\ref{deri} that
\begin{equation}
\label{3-21}
  \int_{\Gamma_{\rho}(W)} \frac{u(x)}{|\nabla W(x)|} \mathrm{d}s = f'(\rho)
  \,.
\end{equation}

\medskip

Let  $\bar{\lambda} = \sup_{x \in \Omega_{\rho^{0}}(W)}
\lambda_{M}(x)$ with $\lambda(x)$ being the largest eigenvalue of
matrix $A(x)$ for each $x \in \Omega_{\rho^{0}}(W)$. Let $u =
u_{\epsilon}$ be the density function of $\mu_{\epsilon}$. It follows from Lemma \ref{conditions} and \eqref{3-21} that for each $0<\rho<\rho_{0}$, inequality
\begin{eqnarray*}
&&\int_{\Gamma_{\rho}(W)} \frac{1}{2}\epsilon^{2}\left (\sum_{i,j = 1}^{n}
a_{ij}(x)\partial_{i} W(x) \nu_{j}(x)\right
)u(x) \mathrm{d}s \\
&=&  \int_{\Gamma_{\rho}(W)}
\frac{1}{2}\epsilon^{2}\left (\sum_{i,j =
    1}^{n}a_{ij}(x)\partial_{i}
  W(x) \partial_{j} W(x)\right )\frac{u(x)}{|\nabla W(x)|} \mathrm{d}s\\
&\leq&\int_{\Gamma_{\rho}(W)} \frac{1}{2}\epsilon^{2} \bar{\lambda}
|\nabla W(x)|^{2} \frac{u(x)}{|\nabla W(x)|} \mathrm{d}s \\
&\leq& \int_{\Gamma_{\rho}(W)} \frac{1}{2}\epsilon^{2} \bar{\lambda} \frac{K_{2}^{2}}{L_{1}}W(x) \frac{u(x)}{|\nabla W(x)|} \mathrm{d}s\\
& \leq & \epsilon^{2}C_{1} \rho f'(\rho)
\end{eqnarray*}
holds for some positive constant $C_{1} < \infty$.

\medskip

By Lemma ~\ref{conditions}, we have the inequality
\begin{equation}
\label{eq3-20}
  \sum_{i =    1}^{n}f_{i}(x)\partial_{i } W(x) \geq -\kappa |\nabla W(x)|^{2}
  \geq -\kappa \frac{K_{2}^{2}}{L_{1}}W(x) \,.
\end{equation}
It then follows from Lemma ~\ref{A1} and \eqref{eq3-20} that there are positive
constants $p, C_{2}$ and $\epsilon_{0}$ such that
$$
  \frac{1}{2}\epsilon^{2}\sum_{i,j = 1}^{n}a_{ij}(x)W_{ij}(x) + \sum_{i =
    1}^{n}f_{i}(x)\partial_{i } W(x) \geq p\epsilon^{2}
$$
for all $x \in \Omega_{C_{2}\epsilon^{2}}(W)$ and $\epsilon
\in (0, \epsilon_{0})$. Without loss of generality, we make
$\epsilon$  sufficiently small such that $\rho_{0} >
C_{2}\epsilon^{2}$. Then for each $\rho \leq C_{2}\epsilon^{2}$
there holds
 $$
   \int_{\Omega_{\rho}(W)} \left (\frac{1}{2}\epsilon^{2}\sum_{i,j = 1}^{n}a_{ij}(x)W_{ij}(x) + \sum_{i =
     1}^{n}f_{i}(x)\partial_{i } W(x)\right )u \mathrm{d}x \geq
   p f(\rho)\epsilon^{2} \,.
 $$

\medskip

Since by Theorem 2.2,
\begin{eqnarray*}
  &&\int_{\Omega_{\rho}(W)} \left (\frac{1}{2}\epsilon^{2}\sum_{i,j = 1}^{n}a_{ij}(x)W_{ij}(x) + \sum_{i =
     1}^{n}f_{i}(x)\partial_{i } W(x)\right
 )u \mathrm{d}x \\
&=& \int_{\Gamma_{\rho}(W)} \frac{1}{2}\epsilon^{2}\left
   (\sum_{i,j = 1}^{n}
a_{ij}(x)\partial_{i} W(x)
\nu_{j}(x)\right )u(x) \mathrm{d}s \,.
\end{eqnarray*}
we conclude that
$$
  pf(\rho)\epsilon^{2} \leq \epsilon^{2}C_{1}\rho f'(\rho)
$$
for each $0 < \rho \leq C_{2}\epsilon^{2}$. Thus
\begin{equation}
\label{3-22}
   \frac{f'(\rho)}{f(\rho)} \geq \frac{p}{C_{1}\rho} \,.
\end{equation}

Integrating \eqref{3-22} from $\epsilon^{2+\delta}$ to
$C_{2}\epsilon^{2}$ yields
 $$
   \log f(C_{2}\epsilon^{2}) - \log \sigma \geq
   \frac{p}{C_{1}}( \log C_{2} - \delta\log \epsilon ) \,.
 $$

\medskip

 As $\epsilon \rightarrow 0$, we have $f(C_{2}\epsilon^{2})
 \rightarrow \infty$. This contradicts to the fact that $f(\rho) \leq 1$. Hence $f(\epsilon^{2+\delta}) = \mu_{\epsilon}(\Omega_{\epsilon^{2+\delta}}) \rightarrow 0$. This completes the proof.

\end{proof}

\begin{rem}
Theorem ~\ref{levelalpha} says that the density function of
$\mu_{\epsilon}$ cannot be ``too narrow''  because
almost all the mass of $\mu_{\epsilon}$ is located in the set $\{x :
\,
  \epsilon^{1+\alpha}\leq \mathrm{dist}(x, \mathcal{A}) \leq
  \epsilon^{1-\alpha}  \}$.
\end{rem}

\subsection{Mean square displacement}    The concentration of $\mu_{\epsilon}
$ can be more concretely measured by the {\it mean
square displacement}  defined by
\[
   \label{MSD}
   V(\epsilon) = \int_{\mathbb{R}^{n}}
   \mathrm{dist}^{2}(x,\mathcal{A}) \mathrm{d}\mu_{\epsilon}.
\]
%  measures the $L^{2}$ average of square of displacement of
%perturbed system. To estimate the mean square displacement, we need
%the following assumption

%\bigskip

%\noindent{\bf Assumption (A1)}
%There is a constant $z > 0$, such that $\sum_{i,j = 1}^{n}a_{ij}W_{ij}$ is
%uniformly positive in $ B( \mathcal{A}, z)$.

%\medskip

% {\bf Remark:} Then Hessian matrix $(W_{ij})$ of $W$ at $x \in
% \mathcal{A}$ is positive semidefinite because $U$ is a $C^{2}$
% Lyapunov function {\color{blue} (why $W$ has to be positive
% semi-definite? Essentially, $W$ is the function representing the
% boundary of $\cal N$ which need not be convex or quasi-convex)}. If
% {\bf (A)} holds then $(W_{ij})$ must have at least one eigenvalue
% with positive real parts. The Hadamard product $(a_{ij})\circ
% (W_{ij}) = \sum_{i,j = 1}^{n}a_{ij}W_{ij}$ is positive semidefinite.
% However this does not mean $\sum_{i,j =
%   1}^{n}a_{ij}W_{ij}$ is always positive because vector $[1\ , 1\,
% \cdots \ , 1 ]^{T}$ maybe the eigenvector of zero eigenvalue of
% $(a_{ij})\circ (W_{ij})$. This is a technical limitation of Lyapunov
% function method {\color{blue} (I do not understand why this is a
% limitation of Lyapunov function method. As long as $(a_{ij})$ is not
% an identity matrix, the mix term is always there no mather which
% method one uses)}. In particular, If {\bf (A)} holds and $(a_{ij}) =
% I_{n}$ then {\bf (A1)} holds from the continuity of $W_{ij}$.

\medskip

The following theorem gives bounds
of the mean square displacement.

\begin{thm}\label{MSDbounds}  If both {\bf H$^0$)} and
{\bf H$^1$)} hold, then there are constants $V_{1}, V_{2},\epsilon_0
> 0$  independent of $\epsilon$ such that
$$
V_{2}\epsilon^{2}\le V(\epsilon) \leq V_{1}\epsilon^{2},\quad
\epsilon\in (0, \epsilon_{0}).
$$

\end{thm}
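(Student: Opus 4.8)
The plan is to reduce $V(\epsilon)$ to the single quantity $\int_{\Omega_\rho(W)}\bigl(-f(x)\cdot\nabla W(x)\bigr)\,\mathrm{d}\mu_\epsilon$ for a fixed, small sublevel radius $\rho$, and to pin this quantity at exact order $\epsilon^2$ using stationarity. By Lemma~\ref{conditions} the integrand is comparable to the local mean square displacement, since $\gamma_{0}K_{1}^{2}\,\mathrm{dist}^{2}(x,\mathcal{A})\le -f\cdot\nabla W\le \kappa K_{2}^{2}\,\mathrm{dist}^{2}(x,\mathcal{A})$ on $\mathcal{N}$; the stationarity of $\mu_\epsilon$ will let me equate it, up to a negligible error, to the diffusion contribution $\tfrac12\epsilon^{2}\int_{\Omega_\rho(W)}\sum_{i,j}a_{ij}W_{ij}\,\mathrm{d}\mu_\epsilon$, which is of order exactly $\epsilon^{2}$. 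Throughout I fix $\rho$ so small that $\Omega_\rho(W)\subset B(\mathcal{A},r_{0})$, where $r_{0}$ is the radius of Lemma~\ref{A1} on which $\sum_{i,j}a_{ij}W_{ij}\ge c_{0}>0$, and so that $\Omega_\rho(W)\subset\mathcal{N}$.

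The key step is an integral identity with a controlled error. Rather than applying Theorem~\ref{lem23} to $F=W$ on $\Omega_\rho(W)$ — whose boundary term over $\Gamma_\rho(W)$ has the wrong sign for a lower bound and admits no pointwise control in $\rho$ — I would test against $\psi:=\chi(W)$, where $\chi\in C^{2}$ satisfies $\chi(s)=s$ for $s\le\rho$ and $\chi(s)\equiv\mathrm{const}$ for $s\ge\rho'$, with $\rho<\rho'$ chosen so that $\Omega_{\rho'}(W)\subset\mathcal{N}$. After subtracting the constant, $\psi$ is a $C^{2}$, compactly supported function (supported in the compact set $\overline{\Omega_{\rho'}(W)}$), so $\int\mathcal{L}_\epsilon\psi\,\mathrm{d}\mu_\epsilon=0$ by stationarity, the classical density of $\mu_\epsilon$ letting us test against $C^{2}_{0}$ functions. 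Since $\mathcal{L}_\epsilon\psi=\tfrac12\epsilon^{2}\sum_{i,j}a_{ij}W_{ij}+f\cdot\nabla W$ on $\Omega_\rho(W)$ and vanishes outside $\Omega_{\rho'}(W)$, this gives
\[
   \int_{\Omega_\rho(W)}\Bigl(\tfrac12\epsilon^{2}\sum_{i,j}a_{ij}W_{ij}+f\cdot\nabla W\Bigr)\mathrm{d}\mu_\epsilon
   =-\int_{\Omega_{\rho'}(W)\setminus\Omega_\rho(W)}\mathcal{L}_\epsilon\psi\,\mathrm{d}\mu_\epsilon .
\]
On the transition shell $\mathcal{L}_\epsilon\psi$ is bounded by a constant (the $O(1)$ drift term $\chi'(W)\,f\cdot\nabla W$ dominates the $O(\epsilon^{2})$ diffusion terms), while by Lemma~\ref{lem31} the shell carries $\mu_\epsilon$-mass at most $e^{-C_{1}/\epsilon^{2}}$; hence the right-hand side is $o(\epsilon^{2})$. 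The smooth cutoff has converted the uncontrollable surface term into a volume integral over a region at fixed positive distance from $\mathcal{A}$, where concentration makes it exponentially small.

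For the lower bound I use $-f\cdot\nabla W\le\kappa K_{2}^{2}\,\mathrm{dist}^{2}(x,\mathcal{A})$ on the left, and on the right the positivity $\sum_{i,j}a_{ij}W_{ij}\ge c_{0}$ on $\Omega_\rho(W)$ from Lemma~\ref{A1} together with $\mu_\epsilon(\Omega_\rho(W))\ge\tfrac12$ for small $\epsilon$ (apply Theorem~\ref{accurate} with $\delta=\tfrac12$ and note $B(\mathcal{A},M\epsilon)\subset\Omega_\rho(W)$ once $M\epsilon<\sqrt{\rho/L_{2}}$). The displayed identity then forces $\kappa K_{2}^{2}\,V(\epsilon)\ge\tfrac{c_{0}}{4}\epsilon^{2}+o(\epsilon^{2})$, so $V(\epsilon)\ge V_{2}\epsilon^{2}$. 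For the upper bound I instead use $-f\cdot\nabla W\ge\gamma_{0}K_{1}^{2}\,\mathrm{dist}^{2}(x,\mathcal{A})$ and $\sum_{i,j}a_{ij}W_{ij}\le C_{3}$, which bounds $\int_{\Omega_\rho(W)}\mathrm{dist}^{2}\,\mathrm{d}\mu_\epsilon$ by $C_{4}\epsilon^{2}$; it then remains to absorb the mass outside $\Omega_\rho(W)$. On $\mathcal{N}\setminus\Omega_\rho(W)$, $\mathrm{dist}^{2}$ is bounded and the mass is $O(e^{-C_{1}/\epsilon^{2}})$ by Lemma~\ref{lem31}; on $\mathbb{R}^{n}\setminus\mathcal{N}$, the first part of {\bf H$^{1}$)} handles $\{|x|\le R_{1}\}$ (bounded $\mathrm{dist}^{2}$, mass $o(\epsilon^{2})$) and the exponential tail in {\bf H$^{1}$)} handles $\{|x|>R_{1}\}$ through a layer-cake estimate, both being $o(\epsilon^{2})$. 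Summing yields $V(\epsilon)\le V_{1}\epsilon^{2}$.

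The main obstacle is the lower bound. Theorem~\ref{levelalpha} by itself only delivers $V(\epsilon)\gtrsim\epsilon^{2+2\alpha}$, which is strictly weaker than $\epsilon^{2}$, and the naive integral identity produces a boundary term of the wrong sign that cannot be estimated pointwise in the level $\rho$. The smooth-cutoff identity, combined with the strict positivity of $\sum_{i,j}a_{ij}W_{ij}$ near $\mathcal{A}$ (Lemma~\ref{A1}) and the $\tfrac12$-concentration of $\mu_\epsilon$ (Theorem~\ref{accurate}), is exactly what fixes the diffusion contribution at order $\epsilon^{2}$ and closes the estimate.
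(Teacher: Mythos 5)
Your proof is correct, and its lower bound shares the paper's core mechanism --- the integral identity of Theorem~\ref{lem23} applied to (a function of) $W$ on a small sublevel set, combined with the uniform positivity of $\sum_{i,j}a_{ij}W_{ij}$ near $\mathcal{A}$ from Lemma~\ref{A1}, the two-sided comparisons of Lemma~\ref{conditions}, and the fact that at least a fixed fraction of the mass sits in $\Omega_\rho(W)$ --- but you package it differently, and your upper bound is genuinely different. Where you neutralize the boundary flux by composing $W$ with a $C^{2}$ cutoff $\chi$ that is constant near the level $\rho'$ (so the surface term in Theorem~\ref{lem23} vanishes identically and the error becomes a volume integral over a shell carrying mass $e^{-C_{1}/\epsilon^{2}}$ by Lemma~\ref{lem31}), the paper keeps $F=W$ and instead applies the mean value theorem to $F(\rho)=\mu_{\epsilon}(\Omega_{\rho_{0}}(W)\setminus\Omega_{\rho}(W))$ to select a level $\rho^{*}$ at which the flux $F'(\rho^{*})$ is exponentially small; the two devices are interchangeable, and yours is arguably cleaner since it avoids the derivative formula (Theorem~\ref{deri}) at this step and is in the same spirit as the cutoff $\phi(W)$ already used in the proof of Theorem~\ref{accurate}. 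The larger divergence is the upper bound: the paper does not extract it from stationarity at all, but re-derives the Gronwall-type decay $F(\rho)\le e^{-c(\rho-\rho_{1})/\epsilon^{2}}$ from the proof of Theorem~\ref{accurate} and integrates $|\nabla W|^{2}$ against it by parts, whereas you read both bounds off a single identity by exploiting the two-sided inequality $\gamma_{0}K_{1}^{2}\,\mathrm{dist}^{2}\le -f\cdot\nabla W\le \kappa K_{2}^{2}\,\mathrm{dist}^{2}$ together with $c_{0}\le\sum_{i,j}a_{ij}W_{ij}\le C_{3}$ on $\Omega_\rho(W)$. Your route is more unified and economical; the paper's buys the explicit exponential concentration profile in $\rho$, which is stronger information than the second moment alone. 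The contributions from $\mathcal{N}\setminus\Omega_\rho(W)$, from $\mathbb{R}^{n}\setminus\mathcal{N}$ inside a large ball, and from the far tail are handled essentially identically in both arguments (Lemma~\ref{lem31}, the first part of {\bf H$^{1}$)}, and a layer-cake sum against $e^{-r^{p}/\epsilon^{2}}$, respectively).
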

\begin{proof}
Fix a $\rho_0>0$ such that $\Omega_{\rho_0}(W) \subset \mathcal{N}$
and $\Gamma_{\rho_0}(W)\cap\cal A = \emptyset$. We have by condition
{\bf H$^1$)} and Lemma~\ref{lem31} that there is an $\epsilon_0\in (0,\epsilon^*)$ sufficiently
small such that
\begin{equation}\label{c7}
\mu_{\epsilon}(\mathbb{R}^{n}\setminus \Omega_{\rho_0}(W)) <
\epsilon^{2},\quad \epsilon\in (0, \epsilon_{0}).
\end{equation}
Consider the function
\begin{displaymath}
   G(\rho) = \int_{\Omega_{\rho_0}\backslash \Omega_{\rho}}|\nabla W(x)|^{2}
    \mathrm{d}\mu_\epsilon,\quad \rho\in [0,\rho_0].
\end{displaymath}
Then it follows from Lemma~\ref{conditions} that
\begin{align*}
& \frac{1}{K_{2}^{2}}G(0)\le
\int_{\Omega_{\rho_0}(W)}\mathrm{dist}^{2}(x,\mathcal{A})\mathrm{d}\mu_\epsilon
\le V(\epsilon)\\
&=
  \int_{\Omega_{\rho_0}(W)}\mathrm{dist}^{2}(x,\mathcal{A})\mathrm{d}\mu_\epsilon +
  \int_{\mathbb{R}^{n}\backslash
    \Omega_{\rho_0}(W)}\mathrm{dist}^{2}(x,\mathcal{A})\mathrm{d}\mu_\epsilon
  \\
 &\leq
  \frac{1}{K_{1}^{2}}G(0) + \int_{\mathbb{R}^{n}\backslash
    \Omega_{\rho_0}(W)}\mathrm{dist}^{2}(x,\mathcal{A})\mathrm{d}\mu_\epsilon,
\end{align*}
where $K_1,K_2$ are as in Lemma~\ref{conditions}.

We first estimate an upper bound of   $G(0)$ in term of
$\epsilon^{2}$.  Let $F(\rho) =
\mu_{\epsilon}(\Omega_{\rho_0}(W)\backslash \Omega_{\rho}(W))$. Then
it follows from equation \eqref{c4} that
\begin{equation}\label{c55}
  F(\rho) \leq F(\rho_{1})
  e^{-\frac{2\gamma_{0}C_{1}}{\epsilon^{2}\bar{\lambda}}(\rho-\rho_{1})}
  \le e^{-\frac{2\gamma_{0}C_{1}}{\epsilon^{2}\bar{\lambda}}(\rho-\rho_{1})},
  \quad \rho\in (\rho_{1},\rho_0),
\end{equation}
for all $0<\epsilon\ll 1$, where $C_{1},\bar{\lambda}, \gamma_{0}$
are constants independent of $\epsilon$ and $\rho_{1} =
C_{2}\epsilon^{2}$ for some constant $C_{2}$ independent of
$\epsilon$. Since by Lemma~\ref{conditions}, $|\nabla W|^{2} \leq
K_{2}^{2}\rho/L_{1}$, we have by \eqref{c55} and Theorem ~\ref{deri}
that
\begin{eqnarray*}
 &&  \int_{\Omega_{\rho_0}(W)\backslash \Omega_{\rho_{1}}(W)} |\nabla
   W(x)|^{2}\mathrm{d}\mu_\epsilon \leq -\int_{\rho_{1}}^{\rho_0}\frac{K_{2}^{2}}{L_{1}}\rho
   F'(\rho)\mathrm{d}\rho\\
& =& \left . -\frac{K_{2}^{2}}{L_{1}}\rho F(\rho) \right
   |^{\rho_0}_{\rho_{1}} +
   \frac{K_{2}^{2}}{L_{1}}\int_{\rho_{1}}^{\rho_0}F(\rho)\mathrm{d}\rho\\
&\leq& \frac{K_{2}^{2}}{L_{1}}\rho_{1}F(\rho_{1}) +
   \frac{K_{2}^{2}}{L_{1}}\int_{\rho_{1}}^{\infty}e^{-\frac{2\gamma_{0}
    C_{1}}{\epsilon^{2}\bar{\lambda}}s}\mathrm{d}s
\leq \frac{K_{2}^{2}}{L_{1}}(C_{2}\epsilon^{2} +
\frac{\bar{\lambda}\epsilon^{2}}{2\gamma_{0}C_{1}}e^{- \frac{2
  \gamma_{0}C_{1}\rho_{1}}{\epsilon^{2}\bar{\lambda}}}) :=  E_2\epsilon^2
\end{eqnarray*}
for all $0<\epsilon\ll 1$. By a simple  calculation, we also have
\begin{displaymath}
   \int_{\Omega_{\rho_{1}}(W)}|\nabla W(x)|^{2}\mathrm{d}\mu_\epsilon \leq M_{2}\epsilon^{2}
\end{displaymath}
as $0<\epsilon\ll 1$, where $M_2>0$ is a constant independent of
$\epsilon$. Thus,
\begin{eqnarray}
\label{eq3.2-1}
   G(0) = \int_{\Omega_{\rho_0}(W)}|\nabla W(x)|^{2}\mathrm{d}\mu_\epsilon &=& \int_{\Omega_{\rho_0(W)\setminus
   \Omega_{\rho_1}(W)}}|\nabla W(x)|^{2}\mathrm{d}\mu_\epsilon
   + \int_{\Omega_{\rho_1}(W)}|\nabla W(x)|^{2}\mathrm{d}\mu_\epsilon \nonumber\\
   & \leq&
  (E_{2}+M_2)\epsilon^{2}, \,\quad 0<\epsilon\ll 1.
\end{eqnarray}

% It is worth mentioning that infact we have estimation
% \begin{equation}
% \label{eq3-12}
%   G(\rho_{1}) \leq \frac{K_{2}^{2}}{L_{1}}(C_{2}\epsilon^{2}F(\rho_{1}) +
% \frac{\bar{\lambda}\epsilon^{2}}{2\gamma_{0}C_{1}}e^{-\frac{ 2
%   \gamma_{0}C_{1}\rho_{1}}{\epsilon^{2}\bar{\lambda}}}) \,.
% \end{equation}

% We remark that in other words $G(\rho_{1})/\epsilon^{2} \rightarrow 0$ as
% $\rho_{1}/\epsilon^{2}$ goes to infinity.

Next, we estimate an upper bound of
$\int_{\mathbb{R}^{n}\backslash
     \Omega_{\rho_0}(W)}\mathrm{dist}^{2}(x,\mathcal{A})\mathrm{d}\mu_\epsilon$. Let
     $R_{0}>0$ be as in {\bf H$^1$)}, i.e.,
\[
\mu_{\epsilon}(\R^n\setminus B(0,r))   \leq e^{-\frac{
r^{p}}{\epsilon^{2}}},\quad r\ge R_{0},
\]
for all $\epsilon \in (0,\epsilon_*)$. Without loss of generality,
we may assume that $R_0$ is sufficiently large such that
\[
\mathrm{dist}(x,\mathcal{A})<2|x|,\quad |x|\ge R_0.
\]
Then
\begin{eqnarray*}
\int_{\mathbb{R}^{n}\backslash
B(0,R_{0})}\mathrm{dist}^{2}(x,\mathcal{A})\mathrm{d}\mu_\epsilon
&\leq & \int_{\{|x| > R_{0}\}}4|x|^{2}u\mathrm{d}x \leq4 \sum_{k =
  k_{0}}^{\infty}e^{-\frac{k^{p}}{\epsilon^{2}}}(k+1)^{n+2}C(n)\\
&\leq& 4 e^{-\frac{k_{0}^{p}}{\epsilon^{2}}}\sum_{k =
  0}^{\infty}e^{-\frac{k^{p}}{\epsilon_{0}^{2}}}(k+k_{0}+1)^{n+2}C(n) \leq C_{5}e^{-\frac{k_{0}^{p}}{\epsilon^{2}}}
\end{eqnarray*}
for all $\epsilon\in (0, \epsilon_{*})$, where $C(n)$ is the volume of the
unit  sphere in $\mathbb{R}^{n}$, $k_{0}$ is the largest integer
smaller than $R_{0}$, and $C_{5}$ is a constant independent of
$\epsilon$. Thus, we can make $\epsilon_0$ sufficiently small such
that
\begin{equation}\label{c6}
\int_{\mathbb{R}^{n}\backslash
B(0,R_{0})}\mathrm{dist}^{2}(x,\mathcal{A})\mathrm{d}\mu_\epsilon<\epsilon^2,\quad
\epsilon\in (0, \epsilon_{0}).
\end{equation}
Using \eqref{c7}, we can make $\epsilon_0$ further small if
necessary such that
\begin{equation}
\label{eq3.2-2}
    \int_{B(0,R_{0})\backslash \Omega_{\rho_0}(W)}
   \mathrm{dist}^{2}(x,\mathcal{A})\mathrm{d}\mu_\epsilon <
   \epsilon^{2},\quad \epsilon\in (0, \epsilon_{0}).
\end{equation}
It now follows from \eqref{c6} and \eqref{eq3.2-2} that
\begin{eqnarray*}
   &&\int_{\mathbb{R}^{n}\backslash
     \Omega_{\rho_0}(W)}\mathrm{dist}^{2}(x,\mathcal{A})\mathrm{d}\mu_\epsilon\\
& =&
   \int_{\mathbb{R}^{n}\backslash B(0,R_{0})}
   \mathrm{dist}^{2}(x,\mathcal{A})\mathrm{d}\mu_\epsilon +
   \int_{B(0,R_{0})\backslash
     \Omega_{\rho_0}(W)}\mathrm{dist}^{2}(x,\mathcal{A})\mathrm{d}\mu_\epsilon<2\epsilon^2
\end{eqnarray*}
for all $\epsilon\in (0, \epsilon_{0})$. This, when combining with
 \eqref{eq3.2-1}, yields
that
\begin{displaymath}
   V(\epsilon) \leq (2+\frac{E_{2}}{K_{1}^{2}})\epsilon^{2} :=
   V_{1}\epsilon^{2},\quad 0<\epsilon\ll 1.
\end{displaymath}

Finally, we estimate a lower bound of $V(\epsilon)$. For each
$\epsilon\in (0,\epsilon^*)$, let $u=u_\epsilon$ be a density
function of $\mu_\epsilon$. Then by Theorem ~\ref{lem23}
\begin{eqnarray}\label{c8}
  &&\int_{\Omega_{\rho}(W)} \left (\frac{1}{2}\epsilon^{2}\sum_{i,j = 1}^{n}a_{ij}(x)W_{ij}(x) + \sum_{i =
    1}^{n}f_{i}(x)\partial_{i } W(x)\right
)u(x) \mathrm{d}x \\\nonumber
&=&
\int_{\Gamma_{\rho}(W)}\frac{1}{2}\epsilon^{2} \left (\sum_{i,j
  = 1}^{n}a_{ij}(x)\partial_{i} W(x)
\nu_{j}(x)\right )u(x) \mathrm{d}s,\quad \rho\in
  (0,\rho_0).
\end{eqnarray}
Let $r_0$ be as in Lemma~\ref{A1} and fix a $\rho_*\in (0,\rho_0)$
such that $\Omega_{\rho_{*}}(W)\subset B(\mathcal{A},r_0)$. Since
$F(\rho_0) = 0$, we have by \eqref{c55} and mean value theorem that
there is a $\rho^{*}\in (\rho_{*},\rho_0)$ such that $F'(\rho^{*})
\leq e^{-\frac{\beta}{\epsilon^{2}}}(\rho_0-\rho_{*})^{-1}$ as
$0<\epsilon<\epsilon_0$, where
$\beta=\frac{2\gamma_{0}C_{1}}{\bar{\lambda}}(\rho_*-\rho_{1})$.  By
{\bf H$^1$)} and Lemma~\ref{lem31}, we can make $\epsilon_0$ further
small if necessary such that  $\mu_{\epsilon}(\Omega_{\rho_{*}}(W))
\geq 1 - o(\epsilon^{2})$ as $\epsilon\in (0, \epsilon_{0})$. It
follows from Lemma~\ref{A1} that
\begin{equation}\label{c9}
 \int_{\Omega_{\rho^{*}}(W)} \frac{1}{2}\epsilon^{2}\sum_{i,j =
    1}^{n}a_{ij}(x)W_{ij}(x) u(x) \mathrm{d}x\ge  \int_{\Omega_{\rho_{*}}(W)} \frac{1}{2}\epsilon^{2}\sum_{i,j =
    1}^{n}a_{ij}(x)W_{ij}(x) u(x) \mathrm{d}x \geq C_{4}\epsilon^{2}
\end{equation}
as $0<\epsilon<\epsilon_0$, where  $C_{4} > 0$ is a constant
independent of $\epsilon$. By Lemma~\ref{conditions}, we also have
\begin{equation}\label{c10}
\int_{\Omega_{\rho^{*}}(W)} \sum_{i =
  1}^{n}f_{i}(x)\partial_{i} W(x) u
\mathrm{d}x \geq -\kappa \int_{\Omega_{\rho^*}(W)} |\nabla W(x)|^{2}u(x)
\mathrm{d}x \geq - \kappa G(0).
\end{equation}
Let $C_{5} = \frac 12\sup_{x\in \mathcal{N}} \sum_{i,j =
  1}^{n}a_{ij}\partial_{i} W \partial_{j} W $ and assume without loss of generality
  that
$C_{5}e^{-\frac{\beta}{\epsilon^{2}}}(\rho_0-\rho_{*})^{-1} < C_{4}/2$,
$\epsilon\in (0, \epsilon_{0})$. It follows from Theorem~\ref{deri} that
\begin{eqnarray}
&&  \int_{\Gamma_{\rho^*}(W)} \frac{1}{2}\epsilon^{2} \left (\sum_{i,j  = 1}^{n}a_{ij}(x)\partial_{i} W(x)
  \nu_{j}(x) \right )u(x) \mathrm{d}s
\\\nonumber
&=& \int_{\Gamma_{\rho^*}(W)} \frac{1}{2}\epsilon^{2} (\sum_{i,j  =
    1}^{n}a_{ij}(x)\partial_{i} W(x) \partial_{j} W(x)) \frac{u(x)}{|\nabla
    W(x)|}
  \mathrm{d} s \nonumber\\
  &\leq& \epsilon^{2}C_{5}\int_{\Gamma_{\rho^*}(W)}  \frac{u(x)}{|\nabla
    W(x)|}
  \mathrm{d} s= \epsilon^{2}C_{5}F'(\rho^*) \nonumber\\
&\leq& \epsilon^{2}C_{5}
e^{-\frac{\beta}{\epsilon^{2}}}(\rho_0-\rho_{*})^{-1}\le \frac{C_4}2
\epsilon^2,\label{c11}
\end{eqnarray}
 as
$\epsilon\in (0, \epsilon_{0})$. Now, \eqref{c8}-\eqref{c11} yield
$$
  G(0) \geq \frac{C_{4}}{2 \kappa} \epsilon^{2},
$$
which implies
$$
V(\epsilon)\ge \frac{C_4}{2\kappa K_2^2}\epsilon^2:= V_2\epsilon^2
$$
as $\epsilon\in (0, \epsilon_{0})$.  This completes the proof.
\end{proof}

\begin{rem}
The mean square displacement is a natural extension of the variance.
Consider $\mathcal{A}_{1} = \{0\}$ and  the Gaussian measure
$\nu_{\epsilon}$ with mean $0$ and variance $\epsilon^{2}$. Then it
is easy to see that Theorems~\ref{levelalpha}, ~\ref{MSDbounds} hold
for $\nu_{\epsilon}$ and $\mathcal{A}_{1}$. This is to say that, by
assuming {\bf H$^{0}$)} and {\bf H$^{1}$)}, the concentration of
$\mu_{\epsilon}$ is Gaussian-like.
\end{rem}

\section{Entropy-dimension relationship}

 In this section, we will  investigate the connection
between the differential entropy of  stationary measures of \eqref{FPE1} and the
dimension of $\mathcal{A}$. This connection will be used  in the
second part of the series.

Let $\mu$ be a probability measure on $\mathbb{R}^{n}$ with a
density function $\xi(x)$.  We recall that the relative entropy of
$\mu$ with respect to Lebusgue measure, or the {\it differential
entropy} of $\mu$ is defined as
\begin{equation}
   \label{Ent}
   \mathcal{H}(\mu) =
   -\int_{\mathbb{R}^{n}}\xi(x)\log \xi(x)
   \mathrm{d}x \,.
\end{equation}

\medskip

\subsection{Regularity of sets and measures} To
establish the connection between the entropy of a stationary measure
$\mu_\epsilon$ of \eqref{FPE1} and the dimension of $\mathcal{A}$,
we will require $\cal A$ be a regular set and $\mu_\epsilon$ be a
regular measure with respect to $\cal A$.

A set $A \subset \mathbb{R}^{n}$ is called a {\it regular set} if
\begin{displaymath}
   \limsup_{r \rightarrow 0} \frac{\log m(B(A,r))}{\log r}  = \liminf_{r\rightarrow 0}
   \frac{\log m(B(A,r))}{\log r} = n-d
\end{displaymath}
for some $d\geq 0$, where    $m(\cdot)$ denotes the Lebesgue measure
on $\mathbb{R}^{n}$. It is easy to check that $d$ is the
  Minkowski dimension of $A$.  Regular sets form a large class that includes
smooth manifolds and even fractal sets like Cantor sets. However,
not all measurable sets are regular (see \cite{pesin1997dimension}
for details).

Assume that \eqref{ODE1} admits a global attractor $\cal A$ and the
Fokker-Planck equation \eqref{FPE1} admits a  stationary measure
$\mu_\epsilon$  for each  $\epsilon\in (0,\epsilon_*)$. The family
$\{\mu_\epsilon\}$
 of
stationary measures is said to be {\it regular with respect to
$\mathcal{A}$}  if for any $\delta > 0$ there are constants $K$, $C$
and a family of approximate funtions $u_{K,\epsilon}$ supported on $B( \mathcal{A}, K\epsilon)$ such that for all $\epsilon \in (0, \epsilon^*)$,
\begin{itemize}
  \item[a)] \begin{equation}\label{regular}
   \inf_{B(\mathcal{A},K\epsilon) }(u_{K, \epsilon}(x))\geq C
   \sup_{B(\mathcal{A},K\epsilon)}(u_{K, \epsilon}(x))  \,;
\end{equation}
and
\item[b)]
$$
 \|u_{\epsilon}(x) - u_{K, \epsilon}(x) \|_{L^{1}}  \leq \delta \,.
$$
\end{itemize}

\medskip

The following propositions give some examples of regular stationary
measures.
\medskip

\begin{pro}
Assume equation \eqref{SDE1} has the form
\begin{equation}\label{gradient}
  \mathrm{d}X_{t} = -\nabla U(X) \mathrm{d}t + \epsilon
  \mathrm{d}W_{t},\quad X\in\R^n,
\end{equation}
where $U\in C^{2}(\R^n)$  is such that $U(x)\ge \beta
\log |x|$ as $|x|\gg 1$ for some positive constant $\beta$.  Then
the family of stationary measures of the Fokker-Planck equations
associated with \eqref{gradient} is regular with respect to $\cal A$
as $0<\epsilon\ll 1$.
\end{pro}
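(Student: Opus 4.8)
The plan is to exploit the fact that for a gradient system with identity noise matrix the stationary Fokker--Planck equation is solved explicitly. First I would note that here $f=-\nabla U$ and $\sigma\equiv I$, so $A=\sigma\sigma^{\top}=I$ and the stationary equation \eqref{ssFPE} reads $\tfrac12\epsilon^2\Delta u+\nabla\cdot(u\nabla U)=0$. A direct computation using $\nabla u_\epsilon=-\tfrac{2}{\epsilon^2}u_\epsilon\nabla U$ shows this is solved by the Gibbs density
\[
u_\epsilon(x)=\frac{1}{Z_\epsilon}\,e^{-2U(x)/\epsilon^{2}},\qquad
Z_\epsilon=\int_{\R^n}e^{-2U(x)/\epsilon^{2}}\,\mathrm{d}x .
\]
The growth hypothesis $U(x)\ge\beta\log|x|$ gives $e^{-2U/\epsilon^2}\le|x|^{-2\beta/\epsilon^2}$ for $|x|\gg1$, so $Z_\epsilon<\infty$ as soon as $\epsilon^2<2\beta/n$; by Theorem~\ref{exist1} this $u_\epsilon$ is then the density of the unique stationary measure. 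Since $f\cdot\nabla U=-|\nabla U|^{2}$, the function $U$ is a strong Lyapunov function (Lyapunov constant $\gamma_0=1$), so \eqref{ODE1} is dissipative and admits the global attractor $\mathcal{A}$; I will write $\mathcal{M}\subseteq\mathcal{A}$ for the (compact) set of global minimizers of $U$ and normalize $\min U=0$.

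Next I would construct the approximating functions. Given $\delta>0$, set $u_{K,\epsilon}:=u_\epsilon\,\mathbf 1_{B(\mathcal{M},K\epsilon)}$, which is supported in $B(\mathcal{M},K\epsilon)\subseteq B(\mathcal{A},K\epsilon)$. For the comparison \eqref{regular} (with the infimum and supremum understood over the support of $u_{K,\epsilon}$), the key mechanism is a second-order Taylor estimate at the minimizers: for $x\in B(\mathcal{M},K\epsilon)$ with nearest minimizer $x_0\in\mathcal{M}$ one has $\nabla U(x_0)=0$ and hence
\[
0\le U(x)\le \tfrac12\|D^{2}U\|_{L^\infty(B(\mathcal{M},1))}\,\mathrm{dist}^{2}(x,\mathcal{M})\le \tfrac12 \|D^2U\|_{L^\infty}K^{2}\epsilon^{2}.
\]
Therefore the ratio of density values across the support is bounded below by $e^{-\|D^2U\|_{L^\infty}K^2}=:C$, a constant independent of $\epsilon$, which is exactly condition a).

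For condition b) I would estimate the discarded mass directly by Laplace's method, since $\|u_\epsilon-u_{K,\epsilon}\|_{L^1}=\mu_\epsilon(\R^n\setminus B(\mathcal{M},K\epsilon))$. I would split $\R^n\setminus B(\mathcal{M},K\epsilon)$ into a near annulus $\{K\epsilon\le\mathrm{dist}(x,\mathcal{M})\le\eta\}$, a fixed intermediate shell $\{\mathrm{dist}(x,\mathcal{M})\ge\eta,\ |x|\le R\}$, and the far field $\{|x|>R\}$. On the intermediate shell $U\ge c_\eta>0$, contributing at most $e^{-2c_\eta/\epsilon^2}$ times a fixed volume; in the far field the bound $e^{-2U/\epsilon^2}\le|x|^{-2\beta/\epsilon^2}$ integrates to $O(R^{\,n-2\beta/\epsilon^2})$; on the near annulus a quadratic lower bound $U\ge c\,\mathrm{dist}^2$ yields a Gaussian tail $\int_{K}^{\infty}e^{-2cr^2}r^{n-1}\mathrm{d}r$ after rescaling $r=\mathrm{dist}/\epsilon$. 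Dividing by the lower bound $Z_\epsilon\gtrsim\epsilon^{n}$ obtained from $\int_{B(x_0,\epsilon)}e^{-2U/\epsilon^2}\,\mathrm{d}x$, the intermediate and far contributions tend to $0$ as $\epsilon\to0$, while the near contribution tends to $0$ as $K\to\infty$; choosing $K$ large then $\epsilon$ small makes the total $\le\delta$ uniformly.

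The hardest step will be the quantitative Laplace estimate in b) near the minimizers, and in particular matching the $K\epsilon$ length scale demanded by the definition of regularity. The clean bounds $Z_\epsilon\sim\epsilon^n$ and the Gaussian annulus tail rely on a genuine quadratic lower bound $U(x)\ge c\,\mathrm{dist}^2(x,\mathcal{M})$, i.e.\ on non-degeneracy of the minima; the hypothesis $U\ge\beta\log|x|$ only controls the far field. If a minimum is degenerate (say $U\sim\mathrm{dist}^{4}$) the concentration width is of order $\epsilon^{1/2}\gg\epsilon$ and the $B(\mathcal{M},K\epsilon)$ scaling is too small, so the proof genuinely needs the (implicitly assumed) hyperbolicity of the equilibria. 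A secondary point to address is the bookkeeping between $\mathcal{M}$ and the full attractor $\mathcal{A}$: the measure concentrates on the global minimizers, so the infimum/supremum in a) must be read over the support $B(\mathcal{M},K\epsilon)$ rather than over all of $B(\mathcal{A},K\epsilon)$, which coincides with $\mathcal{A}$ precisely when the global minimum is unique.
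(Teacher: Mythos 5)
Your route is the same as the paper's: identify the stationary density explicitly as the Gibbs density and then verify the two conditions in the definition of regularity directly. (The paper writes the density as $\frac1K e^{-U(x)/\epsilon^2}$ rather than your $\frac1{Z_\epsilon}e^{-2U(x)/\epsilon^2}$; yours is the one that actually solves $\frac12\epsilon^2\Delta u+\nabla\cdot(u\nabla U)=0$, the discrepancy being a harmless rescaling of $U$.) The difference is that the paper stops there --- its proof is literally ``the regularity of the family thus follows easily from \eqref{c14} and the definition'' --- whereas you carry out the verification: the Taylor bound $0\le U\le \tfrac12\|D^2U\|K^2\epsilon^2$ on $B(\mathcal M,K\epsilon)$ giving condition a) with $C=e^{-\|D^2U\|K^2}$, and the Laplace-type splitting with $Z_\epsilon\gtrsim\epsilon^n$ giving condition b). That part of your argument is correct.

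What your write-up exposes, and correctly flags, is that the verification genuinely requires a quadratic lower bound $U(x)\ge c\,\mathrm{dist}^2(x,\mathcal M)$ near the minimizing set, which does not follow from the stated hypotheses ($U\in C^2$ plus $U\ge\beta\log|x|$ at infinity controls only integrability of the tail). This is not a defect of your proof relative to the paper's: for a degenerate minimum such as $U=|x|^4$ in one dimension the measure concentrates at scale $\epsilon^{1/2}$, so $\mu_\epsilon(\R^n\setminus B(\mathcal A,K\epsilon))\to1$ and condition b) fails for \emph{any} choice of $u_{K,\epsilon}$ supported on $B(\mathcal A,K\epsilon)$; the proposition as stated needs the non-degeneracy you identify, and the paper's one-line proof simply does not address it. Your second caveat --- that $\mu_\epsilon$ concentrates on the set $\mathcal M$ of global minimizers, which can be a proper subset of the global attractor $\mathcal A$ (unequal double well), so the inf/sup in \eqref{regular} must be read over the support of $u_{K,\epsilon}$ --- is likewise a real imprecision in the definition/statement rather than in your argument. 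In short: same approach, executed in full, with two legitimate hypotheses made explicit that the paper leaves implicit.
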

\begin{proof}  For each  $\epsilon\ll1 $,  the Fokker-Planck equation
associated with \eqref{gradient} admits a unique stationary measure
$\mu_\epsilon$ which actually  coincides with the Gibbs measure
with density
\begin{equation}\label{c14}
  \frac{1}{K}e^{-\frac{U(x)}{\epsilon^{2}}} \,,
\end{equation}
where $K$ is the normalizer (see e.g. \cite{huang5} and
references therein). The regularity of the family
$\{\mu_\epsilon\}$ thus follows easily from \eqref{c14} and the
definition.
\end{proof}
\medskip

\begin{pro} Consider \eqref{SDE1} and assume that {\bf H$^1$)} holds. If $\mathcal{A}$ is an equilibrium, then the
the family $\mu_{\epsilon}$ is regular with respect to $\cal A$.
\end{pro}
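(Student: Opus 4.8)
The plan is to take the approximating densities $u_{K,\epsilon}$ to be the restrictions of the true stationary densities $u_\epsilon$ to a small ball around the equilibrium, and to verify the two defining properties of a regular family using, respectively, the concentration estimate of Theorem~\ref{accurate} and a scale--invariant form of the Harnack inequality. Write $\mathcal{A}=\{x_0\}$. Because $\mathcal{A}$ is an equilibrium that is also the global attractor, the remark following {\bf H$^0$)} provides a ball neighborhood $\mathcal{N}$ and a quadratic strong Lyapunov function with $W(x)\ge L_1\,\mathrm{dist}^2(x,\mathcal{A})$, so {\bf H$^0$)} holds; together with the hypothesis {\bf H$^1$)} this lets me invoke Theorem~\ref{accurate}. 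Given $\delta>0$, I fix the constant $K:=M$ and threshold $\epsilon_0$ furnished by Theorem~\ref{accurate}, so that
\[
\mu_{\epsilon}\bigl(B(x_0,K\epsilon)\bigr)\ge 1-\delta,\qquad 0<\epsilon<\epsilon_0,
\]
and I set $u_{K,\epsilon}:=u_\epsilon\,\mathbf{1}_{B(x_0,K\epsilon)}$, which is supported on $B(\mathcal{A},K\epsilon)$ and agrees with $u_\epsilon$ on that (closed) ball. Property (b) is then immediate:
\[
\|u_\epsilon-u_{K,\epsilon}\|_{L^1}=\int_{\R^n\setminus B(x_0,K\epsilon)}u_\epsilon(x)\,\mathrm{d}x=1-\mu_\epsilon\bigl(B(x_0,K\epsilon)\bigr)\le\delta .
\]

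The heart of the matter is property (a): a bound $\sup_{B(x_0,K\epsilon)}u_\epsilon\le C_H\inf_{B(x_0,K\epsilon)}u_\epsilon$ with $C_H$ independent of $\epsilon$. Applying Lemma~\ref{harnack} to $L_\epsilon$ directly does not work, since the ellipticity constant of $L_\epsilon$ is of order $\epsilon^2$ while the drift $f$ is of order one, making the exponent $\nu R$ in the Harnack constant of order $\epsilon^{-2}\cdot\epsilon=\epsilon^{-1}\to\infty$ on a ball of radius $K\epsilon$. This is the main obstacle, and it is removed by rescaling. Setting $y=(x-x_0)/\epsilon$ and $v_\epsilon(y)=u_\epsilon(x_0+\epsilon y)$, the equation $L_\epsilon u_\epsilon=0$ becomes
\[
\frac{1}{2}\sum_{i,j=1}^{n}\partial_{y_iy_j}\bigl(a_{ij}(x_0+\epsilon y)\,v_\epsilon\bigr)-\sum_{i=1}^{n}\partial_{y_i}\bigl(b^\epsilon_i(y)\,v_\epsilon\bigr)=0,\qquad b^\epsilon_i(y):=\frac{1}{\epsilon}f_i(x_0+\epsilon y).
\]
The factor $\epsilon^{-1}$ that a naive rescaling of the first--order term would produce is absorbed by the equilibrium condition $f(x_0)=0$: since $f\in C^1$, one has $|b^\epsilon_i(y)|\le \ell\,|y|$ with $\ell$ a local Lipschitz constant of $f$ at $x_0$, so the rescaled drift is bounded on any fixed ball, uniformly in $\epsilon$. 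Likewise $a_{ij}(x_0+\epsilon y)\to a_{ij}(x_0)$ uniformly on compact sets and $a(x_0)$ is positive definite, so, written in divergence form, the rescaled operator is uniformly elliptic with first--order coefficients bounded uniformly in $\epsilon$. Hence the quantities $\lambda,\Lambda,\nu$ and the (now fixed) radius entering Lemma~\ref{harnack} are independent of $\epsilon$, and applying the inequality on $B(0,K)$, with $B(0,4K)$ contained for small $\epsilon$ in $\{y:\,x_0+\epsilon y\in\mathcal{N}\}$ where $v_\epsilon$ is a strictly positive classical solution, gives $\sup_{B(0,K)}v_\epsilon\le C_H\inf_{B(0,K)}v_\epsilon$ with $C_H$ independent of $\epsilon$. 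Undoing the scaling yields exactly property (a) on $B(x_0,K\epsilon)$, with constant $C:=C_H^{-1}$.

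Combining (a) and (b) shows that $\{\mu_\epsilon\}$ is regular with respect to $\mathcal{A}$ for all sufficiently small $\epsilon$ (decreasing $\epsilon^*$ if necessary). I expect the only delicate point to be the uniformity of the Harnack constant, which the equilibrium scaling $x=x_0+\epsilon y$ resolves by converting $L_\epsilon$ into an $\epsilon$--uniformly elliptic operator with bounded drift.
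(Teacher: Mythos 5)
Your proof is correct, but it takes a genuinely different route from the paper's. The paper's own argument is very short: it invokes the WKB expansion from the cited literature to write $u_\epsilon$ as $\tfrac1K z(x)e^{-W(x)/\epsilon^2}+o(\epsilon^2)$ for a $C^2$ quasi-potential $W$ and continuous prefactor $z$, and then reads off regularity from this explicit form (on a ball of radius $K\epsilon$ about the equilibrium one has $W=O(\epsilon^2)$, so the exponential factor is pinched between constants). You instead construct $u_{K,\epsilon}$ as the restriction of the true density to $B(\mathcal{A},K\epsilon)$, get property (b) from the concentration estimate of Theorem~\ref{accurate}, and get property (a) from Lemma~\ref{harnack} after the blow-up $x=x_0+\epsilon y$, using $f(x_0)=0$ to keep the rescaled drift $\epsilon^{-1}f(x_0+\epsilon y)$ bounded uniformly in $\epsilon$ on fixed balls. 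What your route buys is self-containedness: it does not presuppose the validity or regularity of a WKB expansion, which the paper itself concedes elsewhere is ``difficult to verify in general,'' and the uniform-Harnack-after-rescaling step is genuinely the right mechanism and is carried out correctly (the divergence-form first-order coefficients $\partial_j a_{ij}(x_0+\epsilon y)$ pick up a factor $\epsilon$ and so stay bounded, and $K$ and hence the Harnack constant $C$ are allowed to depend on $\delta$ by the definition of regularity). What it costs is the extra hypothesis \textbf{H$^0$)}, which the proposition does not literally assume and which you import via the paper's remark that an equilibrium attractor admits a quadratic strong Lyapunov function on a ball neighborhood; that remark tacitly requires the equilibrium to be, e.g., hyperbolic, so your proof inherits exactly the same implicit nondegeneracy assumption that the paper's WKB-based proof does. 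At the paper's level of rigor the two arguments are on equal footing, and yours is arguably more in the spirit of the level-set/Harnack machinery developed in Section~3 and in Lemma~\ref{entbound}.
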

\begin{proof} Without loss of generality, we assume
that  $\mathcal{A} = \{0\}$.  It follows from {\bf H$^1$)}
and the WKB expansion (see \cite{ludwig1975persistence, day1985some, day1994regularity}) that there is
 a function $W\in C^2(\R^n)$,  called quasi-potential
function, such that the density function of
$\mu_\epsilon$ for each $\epsilon\in (0,\epsilon^*)$ has the form
\begin{equation}\label{c15}
 \frac{1}{K}z(x)e^{-\frac{W(x)}{\epsilon^{2}}} + o(\epsilon^{2}) \,,
\end{equation}
where $K$ is the normalizer and $z\in C(\R^n)$. It is easy to see
that $u_\epsilon(x)$ is regular.  The regularity of the family
$\{\mu_\epsilon\}$ then follows  from \eqref{c15} and the
definition.
\end{proof}

\medskip

In many biological applications, WKB expansion as in \eqref{c15} is
assumed \cite{newby2011asymptotic, bressloff2014stochastic, maier1993escape}. If the family of stationary measures satisfies \eqref{c15},
then it must be regular with respect to $\mathcal{A}$. However, if
$\mathcal{A}$ is not an equilibrium, verifying \eqref{c15} is
difficult in general. Still, although there are some
technical huddles, proving that a stationary measure is regular with
respect to the global attractor is possible in many cases.

If $\mathcal{A}$ is a limit cycle on which $f$ is everywhere
non-vanishing, then equation \eqref{SDE1} can be linearized in the
vicinity of $\mathcal{A}$. The solution of the linearized equation can
be explicitly given. Therefore the density function of
$\mu_{\epsilon}$ can be estimated via probabilistic approaches. We will prove in our future work that the
family $\mu_{\epsilon}$ is regular with respect to the limit cycle
$\mathcal{A}$.  In addition, we conjecture that when
   {\bf H$^{1}$)} holds for equation \eqref{SDE1} and equation
   \eqref{ODE1} admits an SRB measure, the family
 $\mu_{\epsilon}$ is regular with respect to $\mathcal{A}$ under
 suitable conditions.

%\begin{pro}
%\label{regcycle}   Consider \eqref{SDE1} and assume that and {\bf
%  A$^*$)} holds. If $\mathcal{A}$ is a limit cycle on which $f$
%is everywhere non-vanishing, then the the family $\mu_{\epsilon}$ is
%regular with respect to $\cal A$.
%\end{pro}

%A proof of Proposition \ref{regcycle} is given in the appendix.

\subsection{Entropy and dimension}
The main theorem of this subsection is the following
  entropy-dimension inequality.
\begin{thm}
\label{EntDimThm}
Assume that {\bf H$^{0}$)} and {\bf H$^{1}$)} hold. If $\mathcal{A}$ is a regular set, then
\begin{equation}
   \label{EntDim}
   \liminf_{\epsilon\rightarrow
     0}\frac{\mathcal{H}(\mu_{\epsilon})}{\log \epsilon} \geq n - d \,,
\end{equation}
where $d$ is the Minkowski dimension of $\mathcal{A}$.
\end{thm}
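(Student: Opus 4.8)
The plan is to bound the differential entropy from above by a quantity of size $(n-d)\log\epsilon$ and then divide by $\log\epsilon<0$, which reverses the inequality into the desired lower bound for the $\liminf$. The basic tool is Gibbs' inequality (nonnegativity of relative entropy): for any probability density $g$ on $\R^n$,
\[
   \mathcal{H}(\mu_{\epsilon})=-\int_{\R^n}u_{\epsilon}\log u_{\epsilon}\,\mathrm{d}x\le-\int_{\R^n}u_{\epsilon}\log g\,\mathrm{d}x,
\]
since the difference is $\int u_{\epsilon}\log(u_{\epsilon}/g)\,\mathrm{d}x\ge0$. Thus it suffices to produce one convenient reference density $g=g_{\epsilon}$ for which $\int u_{\epsilon}(-\log g_{\epsilon})\,\mathrm{d}x$ has the right order, and no regularity of the measure $\mu_\epsilon$ itself is needed, only regularity of the set $\mathcal A$.

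Fix $\delta>0$ and let $M=M(\delta)$ be the constant from Theorem~\ref{accurate}, so that $\mu_{\epsilon}(B(\mathcal A,M\epsilon))\ge1-\delta$ for all small $\epsilon$. Write $B_{\epsilon}=B(\mathcal A,M\epsilon)$ and set
\[
   g_{\epsilon}=(1-\tau)\,\frac{\mathbf{1}_{B_{\epsilon}}}{m(B_{\epsilon})}+\tau\,h,
\]
for a fixed $\tau\in(0,1)$ and a fixed strictly positive density $h$ with Gaussian tails. This $g_{\epsilon}$ is a genuine, full-support probability density, so Gibbs' inequality applies. On $B_{\epsilon}$ one has $g_{\epsilon}\ge(1-\tau)/m(B_{\epsilon})$, so $-\log g_{\epsilon}\le\log m(B_{\epsilon})-\log(1-\tau)$; on the complement $g_{\epsilon}\ge\tau h$, so $-\log g_{\epsilon}\le-\log\tau-\log h$.

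I would then split $\int u_{\epsilon}(-\log g_{\epsilon})$ over $B_{\epsilon}$ and $\R^n\setminus B_{\epsilon}$. Since $\log m(B_{\epsilon})\to-\infty$, the integrand on $B_{\epsilon}$ is eventually negative, and $\mu_{\epsilon}(B_{\epsilon})\ge1-\delta$ gives $\int_{B_{\epsilon}}u_{\epsilon}(-\log g_{\epsilon})\le(1-\delta)\bigl(\log m(B_{\epsilon})-\log(1-\tau)\bigr)$. On the complement, the term $-\log\tau$ contributes at most $-\delta\log\tau$ (using $\mu_{\epsilon}(\R^n\setminus B_{\epsilon})\le\delta$), while $\int(-\log h)\,\mathrm{d}\mu_{\epsilon}$ is bounded uniformly in $\epsilon$: with $h$ Gaussian, $-\log h$ grows quadratically, and the exponential tail bound in {\bf H$^1$)}—handled exactly as in the proof of Theorem~\ref{MSDbounds}—makes $\int|x|^2\,\mathrm{d}\mu_{\epsilon}$, hence all the relevant moments, bounded uniformly in $\epsilon$. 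Collecting terms gives
\[
   \mathcal{H}(\mu_{\epsilon})\le(1-\delta)\log m(B_{\epsilon})+C,
\]
with $C=C(\delta,\tau)$ independent of $\epsilon$. Finally I invoke regularity of $\mathcal A$: since $M\epsilon\to0$,
\[
   \frac{\log m(B(\mathcal A,M\epsilon))}{\log\epsilon}=\frac{\log m(B(\mathcal A,M\epsilon))}{\log(M\epsilon)}\cdot\frac{\log(M\epsilon)}{\log\epsilon}\longrightarrow(n-d)\cdot1 .
\]
Dividing the displayed bound by $\log\epsilon<0$ reverses the inequality, and letting $\epsilon\to0$ yields $\liminf_{\epsilon\to0}\mathcal{H}(\mu_{\epsilon})/\log\epsilon\ge(1-\delta)(n-d)$; since $\delta>0$ is arbitrary the theorem follows. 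The only genuinely delicate point I expect is the tail control: one must choose the tail $h$ of the reference density so that $\int(-\log h)\,\mathrm{d}\mu_{\epsilon}$ stays bounded uniformly in $\epsilon$, and must apply the set-regularity limit precisely at the shrinking scale $r=M\epsilon$ so that the constant $\log M$ is absorbed into the harmless $C/\log\epsilon\to0$ term.
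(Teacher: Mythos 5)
Your proof is correct. The underlying mechanism is the same as the paper's --- the concentration estimate of Theorem~\ref{accurate} puts mass $\ge 1-\delta$ on a set of Lebesgue measure $\approx \epsilon^{n-d}$, and a Jensen-type inequality converts this into an upper bound on $\mathcal{H}(\mu_{\epsilon})$ of order $(n-d)\log\epsilon$ --- but your technical route is genuinely different and leaner. The paper bounds $\int u_{\epsilon}\log u_{\epsilon}$ from below region by region: on $B(\mathcal{A},M\epsilon)$ it uses the fact that the constant density minimizes $\int u\log u$ under a mass constraint (which is exactly your Gibbs inequality restricted to the ball), while on the complement it needs two auxiliary lemmas (Lemmas~\ref{entout} and~\ref{entinside}) to show that the contributions of the far tail and of the intermediate compact region are bounded below by $-\epsilon^{l}$ and $-2\sqrt{\sigma}$ respectively. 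Your single global application of $\mathcal{H}(\mu_{\epsilon})\le -\int u_{\epsilon}\log g_{\epsilon}$ with the mixture $g_{\epsilon}=(1-\tau)\,\mathbf{1}_{B_{\epsilon}}/m(B_{\epsilon})+\tau h$ replaces all of this at once: the Gaussian component absorbs the complement, and the only input needed there is a uniform-in-$\epsilon$ bound on $\int |x|^{2}\,\mathrm{d}\mu_{\epsilon}$, which indeed follows from the exponential tail in {\bf H$^1$)} by the same annulus-summation as in the proof of Theorem~\ref{MSDbounds}. The two points you flag as delicate are handled correctly: $-\log g_{\epsilon}$ is integrable against $\mu_{\epsilon}$ (so the relative entropy $\int u_{\epsilon}\log(u_{\epsilon}/g_{\epsilon})$ is well-defined in $(-\infty,+\infty]$ and nonnegative), and evaluating the set-regularity limit at the shrinking radius $r=M\epsilon$ correctly sends the $\log M$ correction into the vanishing $C/\log\epsilon$ term. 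What the paper's route produces as a by-product are the explicit region-wise lower bounds of Lemmas~\ref{entout} and~\ref{entinside}; what yours buys is the elimination of those lemmas and of any pointwise control on $u_{\epsilon}$, at the cost only of fixing a reference density with tails heavy enough to be integrated against the uniformly bounded second moments of $\mu_{\epsilon}$.
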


 To prove Theorem \ref{EntDimThm}, the following three lemmas that
estimate the integral of $ u_{\epsilon}(x)\log
u_{\epsilon}(x) $ are useful. The first lemma
  estimates the integral of $u_{\epsilon}(x) \log u_{\epsilon}(x)$
  outside large spheres.
\medskip

\begin{lem}
\label{entout}
Let $l > 0$ be a fixed constant independent of $\epsilon$. If {\bf H$^{1}$)} holds, then there exist positive constants $\epsilon_{0}$, $R_{0}$ such that
$$
  \int_{|x|>R_{0}} u_{\epsilon}(x)\log u_{\epsilon}(x) \mathrm{d}x\geq
  -\epsilon^{l} \,,
$$
for all $\epsilon \in (0, \epsilon_{0})$.
\end{lem}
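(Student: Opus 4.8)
The plan is to reduce the claim to an upper bound on the \emph{negative part} of the integrand and then exploit the super-exponential tail decay furnished by {\bf H$^1$)}. Writing $(u_\epsilon\log u_\epsilon)^- = u_\epsilon\log(1/u_\epsilon)\,\mathbf 1_{\{u_\epsilon<1\}}$, it suffices to produce $R_0,\epsilon_0>0$ with
\[
  \int_{|x|>R_0}(u_\epsilon\log u_\epsilon)^-\,\mathrm{d}x \le \epsilon^l,\qquad \epsilon\in(0,\epsilon_0),
\]
since the positive part only increases the integral, and if it is infinite the asserted lower bound is trivial. Note that the crude pointwise bound $t\log t\ge -1/e$ is useless here because $\{|x|>R_0\}$ has infinite Lebesgue measure; the gain must come entirely from the smallness of the mass that $\mu_\epsilon$ places far out.

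First I would fix $R_0$ to be an integer no smaller than the constant $R_0$ appearing in {\bf H$^1$)}, set $k_0=R_0$, and decompose the tail into annuli $A_k=\{x:k\le|x|<k+1\}$, $k\ge k_0$, so that $\{|x|>R_0\}=\bigsqcup_{k\ge k_0}A_k$. Put $m_k=\mu_\epsilon(A_k)$ and $V_k=m(A_k)$; then {\bf H$^1$)} gives $m_k\le\mu_\epsilon(\{|x|>k\})\le e^{-k^p/\epsilon^2}$, while $V_k\le C(n)(k+1)^n$. On each annulus $-t\log t$ is concave, so Jensen's inequality applied to normalized Lebesgue measure on $A_k\cap\{u_\epsilon<1\}$ gives $\int_{A_k}(u_\epsilon\log u_\epsilon)^-\,\mathrm{d}x\le \tilde m_k\log(W_k/\tilde m_k)$, where $\tilde m_k\le m_k$ and $W_k\le V_k$ denote the $\mu_\epsilon$- and Lebesgue-measures of $A_k\cap\{u_\epsilon<1\}$. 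Since $t\mapsto t\log(V_k/t)$ and $t\mapsto -t\log t$ are increasing in the relevant ranges and $m_k\le e^{-k^p/\epsilon^2}\le 1/e$ once $\epsilon$ is small, I may enlarge the right-hand side, replacing $\tilde m_k,W_k$ by $e^{-k^p/\epsilon^2}$ and $V_k$, to obtain
\[
  \int_{A_k}(u_\epsilon\log u_\epsilon)^-\,\mathrm{d}x \le e^{-k^p/\epsilon^2}\Big(\log V_k+\frac{k^p}{\epsilon^2}\Big).
\]

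Summing over $k\ge k_0$ is legitimate by Tonelli, as the integrand is nonnegative, and reduces the claim to estimating $\Sigma(\epsilon):=\sum_{k\ge k_0}e^{-k^p/\epsilon^2}(\log V_k+k^p/\epsilon^2)$. Using $y e^{-y}\le 1/e$ with $y=k^p/(2\epsilon^2)$ to absorb the factor $k^p/\epsilon^2$, each summand is at most $e^{-k^p/(2\epsilon^2)}(\log V_k+2/e)$; extracting $e^{-k^p/(2\epsilon^2)}\le e^{-k_0^p/(4\epsilon^2)}e^{-k^p/(4\epsilon^2)}$ for $k\ge k_0$ and bounding $e^{-k^p/(4\epsilon^2)}\le e^{-k^p/4}$ for $\epsilon\le1$ yields $\Sigma(\epsilon)\le S_0\,e^{-k_0^p/(4\epsilon^2)}$, where $S_0=\sum_{k\ge k_0}e^{-k^p/4}\big(\log(C(n)(k+1)^n)+2/e\big)<\infty$ is independent of $\epsilon$. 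Finally $e^{-k_0^p/(4\epsilon^2)}=o(\epsilon^l)$ as $\epsilon\to0$ for every fixed $l$, so there is an $\epsilon_0>0$ with $\Sigma(\epsilon)\le\epsilon^l$ for $\epsilon\in(0,\epsilon_0)$, which proves the lemma. The only genuine obstacle is this bookkeeping: arranging the annular decomposition and the splitting of the exponent so that the single super-exponential factor $e^{-k_0^p/\epsilon^2}$ dominates both the polynomial volume growth in $\log V_k$ and the algebraic blow-up $1/\epsilon^2$; the concavity/Jensen step and the tail input from {\bf H$^1$)} are otherwise routine.
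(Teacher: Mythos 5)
Your proof is correct and follows essentially the same route as the paper's: the same decomposition of $\{|x|>R_0\}$ into unit annuli, the same input $\mu_\epsilon(\{|x|>k\})\le e^{-k^p/\epsilon^2}$ from {\bf H$^1$)}, and the same summation showing the single factor $e^{-k_0^p/\epsilon^2}$ beats both the polynomial volume growth and the $\epsilon^{-2}$ blow-up. The only (cosmetic) difference is the per-annulus step: where you invoke Jensen's inequality for the concave function $-t\log t$, the paper splits each annulus into $\{u_\epsilon\ge e^{-k^p/\epsilon^2}\}$ and its complement and uses monotonicity of $t\log t$ on $(0,e^{-1})$, arriving at the same bound $e^{-k^p/\epsilon^2}\bigl(\tfrac{k^p}{\epsilon^2}+O(\log k)\bigr)$ per annulus.
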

\begin{proof}
It follows from {\bf H$^{1}$)} that $\mu_{\epsilon}$ has the
tail bounds
$$
 \mu_{\epsilon}( \mathbb{R}^{n}\setminus B(0, |x|)) =  \int_{\mathbb{R}^{n}\setminus B(0, |x|)} u_{\epsilon}(x) \mathrm{d}x
  \leq e^{-\frac{|x|^{p}}{\epsilon^{2}}}
$$
for all $|x| >R_{0}$, where $R_{0}$ and $p$ are positive constants independent of $\epsilon$.

\medskip

 For each positive integer $k$, we denote
$\Omega_{k}=\{x : \, k \leq |x| < k+1 \}$. Let $k_{0}$ be the
smallest integer that is larger than $R_{*}$. Then for each $k >
k_{0}$, we have
$$
  \int_{\Omega_{k}}u_{\epsilon}(x) \mathrm{d}x \leq e^{-\frac{k^{p}}{\epsilon^{2}}} \,.
$$

\medskip

Let $\Omega_{k} = A_{k} \cup B_{k}$ where
$A_{k} = \{ x\in \Omega_{k} : \, u_{\epsilon}(x) \geq e^{-\frac{
  k^{p}}{\epsilon^{2}}} \}$, $B_{k} = \Omega_{k}\setminus A_{k}$. Since $x\log x$ decreases on the interval $(0 ,e^{-1})$, for sufficient small $\epsilon$ we have
$$
  \int_{A_{k}}u_{\epsilon}(x)\log u_{\epsilon}(x) \mathrm{d}x \geq -\frac{k^{p}}{\epsilon^{2}}\int_{A_{k}}u_{\epsilon}(x) \mathrm{d}x \geq
  -\frac{k^{p}}{\epsilon^{2}}e^{- \frac{k^{p}}{\epsilon^{2}}} =: a_{k}
$$
and
$$
  \int_{B_{k}} u_{\epsilon}(x)\log u_{\epsilon}(x) \geq \int_{B_{k}} -\frac{ k^{p}}{\epsilon^{2}}e^{-k^{p}/\epsilon^{2} } \mathrm{d}x \geq
  -\frac{\pi^{\frac{n}{2}}}{\Gamma(\frac{n}{2}+1)}k^{n}\frac{ k^{p}}{\epsilon^{2}}e^{-\frac{k^{p}}{\epsilon^{2}} } =:
  b_{k} \,,
$$
where $\Gamma(x)$ is the Gamma function.

\medskip

It is easy to see that for any $l > 0$ there is an $\epsilon_{0} > 0$ such that
$$
  \sum_{k=k_{0}}^{\infty} a_{k}+b_{k} \geq -\epsilon^{l}
$$
for all $\epsilon \in (0, \epsilon_{0})$. The proof is completed by letting $R_{0} = k_{0}$.
\end{proof}

\medskip
The second lemma bounds the integration of
  $u_{\epsilon}(x) \log u_{\epsilon}(x)$ over compact sets.

\begin{lem}
\label{entinside} Let $v(x)$ be a probability density function on
$\mathbb{R}^{n}$ and $\Omega \subseteq \mathbb{R}^{n}$ be a Lebesgue
measurable compact set. Then there is a $\delta_{0} > 0$ such that
for each $\delta \in (0, \delta_{0})$, if
$$
  \int_{\Omega}v(x) \mathrm{d}x \leq \delta
$$
then
$$
  \int_{\Omega} v(x)\log v(x) \mathrm{d}x \geq
  -2 \sqrt{\delta} \,.
$$
\end{lem}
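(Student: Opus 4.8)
The plan is to reduce everything to the single scalar $c := \int_{\Omega} v(x)\,\mathrm{d}x \le \delta$ together with the \emph{finite} Lebesgue measure $L := m(\Omega)$, which is finite because $\Omega$ is compact. The structural input is that $\phi(t) := t\log t$ (with the convention $\phi(0)=0$) is convex on $[0,\infty)$ and bounded below by $-1/e$; the latter guarantees that the negative part of $\phi(v)$ is integrable over the finite-measure set $\Omega$, so $\int_{\Omega}\phi(v)\,\mathrm{d}x$ is well defined in $(-\infty,+\infty]$ and the desired inequality is non-vacuous. First I would apply Jensen's inequality to $\phi$ against the normalized probability measure $\frac1L\,\mathrm{d}x$ on $\Omega$,
\[
\frac1L\int_{\Omega} v\log v\,\mathrm{d}x \;\ge\; \phi\Big(\tfrac1L\int_{\Omega} v\,\mathrm{d}x\Big) \;=\; \frac{c}{L}\log\frac{c}{L},
\]
which after multiplying by $L$ yields $\int_{\Omega} v\log v\,\mathrm{d}x \ge c\log(c/L)$.

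The second step is to optimize the right-hand side in $c$. Setting $h(c) := c\log(c/L)$, we have $h'(c) = \log(c/L)+1$, which vanishes only at $c = L/e$ and is negative for $c < L/e$. Hence, provided $\delta < L/e$, the function $h$ is strictly decreasing on $[0,\delta]$, and since the actual mass satisfies $c \le \delta$ we obtain $h(c) \ge h(\delta)$. Combining with the Jensen bound gives
\[
\int_{\Omega} v\log v\,\mathrm{d}x \;\ge\; h(\delta) \;=\; \delta\log(\delta/L) \;=\; \delta\log\delta - \delta\log L.
\]

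It then remains to verify the elementary asymptotic that $\delta\log(\delta/L) \ge -2\sqrt{\delta}$ for all sufficiently small $\delta$. Dividing by $\sqrt{\delta}$, one checks that $\sqrt{\delta}\log\delta - \sqrt{\delta}\log L \to 0$ as $\delta \to 0^{+}$ (both terms are $o(1)$), so there is a threshold $\delta_{0} \in (0, L/e)$ below which this quotient exceeds $-2$, giving the claim. The only real subtlety, and the point I would flag as the main obstacle, is conceptual rather than computational: the constant $\delta_{0}$ must be allowed to depend on $\Omega$ through $L = m(\Omega)$. Indeed, spreading mass $\delta$ uniformly over a set of large measure shows the stated bound genuinely fails without a smallness restriction tied to $m(\Omega)$, so I would make explicit in the statement that $\delta_{0} = \delta_{0}(\Omega)$. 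The rigor of the Jensen step despite the possible non-integrability of $\phi(v)$ is handled precisely by the uniform lower bound $\phi \ge -1/e$ noted at the outset.
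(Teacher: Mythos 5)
Your proof is correct, but it takes a genuinely different route from the paper's. The paper argues by a threshold decomposition: it sets $\eta=\sqrt{\delta}$ and splits $\Omega$ into $A=\{x\in\Omega:\ v(x)>e^{-1/\eta}\}$ and $B=\Omega\setminus A$; on $A$ the pointwise bound $\log v>-1/\eta$ gives $\int_A v\log v\,\mathrm{d}x\ge -\delta/\eta=-\sqrt{\delta}$, while on $B$ the monotonicity of $t\log t$ on $(0,e^{-1})$ bounds the integrand below by $-\tfrac{1}{\eta}e^{-1/\eta}$, so that piece contributes at least $-m(\Omega)\tfrac{1}{\eta}e^{-1/\eta}$, which is $\ge-\sqrt{\delta}$ once $\delta$ is small compared with $(\log m(\Omega))^{-2}$. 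You instead apply Jensen's inequality for the convex function $t\log t$ against the normalized Lebesgue measure $\tfrac{1}{m(\Omega)}\mathrm{d}x$, which reduces the whole problem to the extremal case of the constant density and then to the scalar inequality $\delta\log(\delta/m(\Omega))\ge-2\sqrt{\delta}$; this is cleaner, identifies the uniform density as the worst case, and is in fact the same variational idea the paper itself invokes later in the proof of Theorem~\ref{EntDimThm} (the ``Lagrange multiplier'' step). What the paper's version buys is elementarity (no Jensen) and an explicit $\delta_0$; what yours buys is brevity and a sharp structural explanation. Your handling of the measurability/integrability issue via the uniform bound $t\log t\ge -1/e$ is exactly what is needed to make Jensen legitimate here, and your observation that $\delta_0$ must depend on $m(\Omega)$ is correct (uniform mass $\delta$ on a set of measure $e^{4/\sqrt{\delta}}$ violates the bound) --- though note the statement's quantifier order, with $\Omega$ fixed before $\delta_0$ is chosen, already permits this dependence, just as the paper's choice $\delta_0=(\log V)^{-2}$ does.
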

\begin{proof}
The proof only contains elementary calculations. Let $\eta =
\sqrt{\delta}$ and write $\Omega$ into $\Omega = A \cup B$, where $A
= \{x \in \Omega  : \, v(x) > e^{-1/\eta} \}$ and $B = \Omega
\setminus A$. Then for every $\eta < 1$,  we have
$$
  \int_{A} v(x) \log v(x) \mathrm{d}x \geq -\frac{1}{\eta} \int_{A}v(x) \mathrm{d}x \geq -
  \frac{\delta}{\eta}
$$
and
$$
  \int_{B} v(x)\log v(x) \mathrm{d}x \geq -\frac{1}{\eta}e^{-\frac{1}{\eta}} \int_{B} \mathrm{d}x \geq -\frac{V}{\eta}e^{-\frac{1}{\eta}}
  \,,
$$
 where $V$ denotes the Lebesgue measure of $\Omega$.
\medskip

Let $\delta_{0} = ( \log V)^{-2}$. Then for any $\delta < \delta_{0}$,
we have
$$
    \int_{\Omega} v(x)\log v(x) \mathrm{d}x \geq
  -2 \sqrt{\delta} \,.
$$
\end{proof}

The upper bound of $u_{\epsilon}(x)$ is estimated in the
  following lemma.

\begin{lem}
\label{entbound}
If {\bf H$^{1}$)} holds, then there is a constant $\epsilon_{0} > 0$ such that $u_{\epsilon}(x) \leq \epsilon^{-2n+1}$ whenever $x \in \mathbb{R}^{n}$ and $\epsilon \in (0, \epsilon_{0})$.
\end{lem}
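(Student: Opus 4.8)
The plan is to bound $\sup_x u_\epsilon(x)$ by a \emph{local} mass of $\mu_\epsilon$ using the Harnack inequality (Lemma~\ref{harnack}). Since $u_\epsilon$ is a strictly positive classical stationary solution of \eqref{ssFPE}, it solves the elliptic equation $L_\epsilon u_\epsilon=0$, and the first step is to recast $L_\epsilon$ in the divergence form required by Lemma~\ref{harnack}. Writing $\tfrac12\epsilon^2\sum_{i,j}\partial_{ij}(a_{ij}u)=\sum_{i,j}\partial_i\big(\tfrac12\epsilon^2 a_{ij}\partial_j u\big)+\sum_i\partial_i\big(\tfrac12\epsilon^2(\sum_j\partial_j a_{ij})\,u\big)$, the stationary equation takes the form $\sum_{i,j}\partial_i(\tilde a_{ij}\partial_j u)+\sum_i\partial_i(\tilde b_i u)=0$ with $\tilde a_{ij}=\tfrac12\epsilon^2 a_{ij}$ and $\tilde b_i=\tfrac12\epsilon^2\sum_j\partial_j a_{ij}-f_i$, so that in the notation of Lemma~\ref{harnack} one has $c_i\equiv 0$, $d\equiv 0$.

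Next I would track how the Harnack data scale in $\epsilon$. Because the diffusion is $\tfrac12\epsilon^2 A$ with $A$ uniformly positive definite on bounded sets, the ellipticity constants satisfy $\lambda,\Lambda=O(\epsilon^2)$, so $\Lambda/\lambda$ stays bounded; the drift, however, forces $\nu=O(\epsilon^{-2})$ since $\nu^2\sim\lambda^{-2}|\tilde b|^2\sim\epsilon^{-4}|f|^2$. To keep the Harnack constant $C\le C_0(n)^{\Lambda/\lambda+\nu R}$ uniformly bounded in $\epsilon$, the decisive choice is to apply Lemma~\ref{harnack} on balls of radius $R=\kappa\epsilon^2$ with $\kappa$ a small fixed constant, so that $\nu R=O(1)$. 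For $x_0$ in a fixed bounded neighborhood of $\mathcal A$ (where the operator is uniformly elliptic), Harnack then gives
$$
u_\epsilon(x_0)\le \sup_{B_R(x_0)}u_\epsilon\le C\inf_{B_R(x_0)}u_\epsilon\le \frac{C}{\omega_n R^n}\int_{B_R(x_0)}u_\epsilon\,\mathrm{d}x=\frac{C}{\omega_n\kappa^n}\,\epsilon^{-2n}\,\mu_\epsilon\big(B(x_0,\kappa\epsilon^2)\big),
$$
where $\omega_n$ is the volume of the unit ball. For $x_0$ far from $\mathcal A$ the same estimate, together with the exponential tail in {\bf H$^1$)}, makes $u_\epsilon(x_0)$ super-polynomially small and hence harmless, so it suffices to treat $x_0$ near $\mathcal A$.

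The displayed inequality already yields the crude bound $u_\epsilon\le C'\epsilon^{-2n}$, and the real content of the lemma is the extra power of $\epsilon$: I must show the local mass satisfies $\mu_\epsilon(B(x_0,\kappa\epsilon^2))=O(\epsilon)$ uniformly in $x_0$, and this is where I expect the main difficulty to lie. My intended route is to trade the small ball for a thin level shell of $W$. By Lemma~\ref{conditions}, $|\nabla W|\sim\mathrm{dist}(x,\mathcal A)$, so on the concentration region where $u_\epsilon$ is essentially supported (Theorem~\ref{accurate}) a ball of radius $\kappa\epsilon^2$ is contained in a shell $\{\rho_-\le W\le\rho_+\}$ with $\rho_+-\rho_-=O(|\nabla W|\,\epsilon^2)=O(\epsilon^3)$; hence $\mu_\epsilon(B(x_0,\kappa\epsilon^2))\le f(\rho_+)-f(\rho_-)$ with $f(\rho)=\mu_\epsilon(\Omega_\rho(W))$. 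Since $f$ rises from $0$ to $1$ across a $W$-range of width $O(\epsilon^2)$ (Theorems~\ref{accurate} and \ref{levelalpha}), a shell of thickness $O(\epsilon^3)$ should carry only an $O(\epsilon)$ fraction of the mass. Turning this heuristic into a uniform pointwise increment bound $f(\rho_+)-f(\rho_-)=O(\epsilon)$ — rather than merely an on-average statement — is the hard step, and I would extract it from the Gronwall-type level-set estimate \eqref{c4} used in the proof of Theorem~\ref{accurate} combined with the derivative formula (Theorem~\ref{deri}).
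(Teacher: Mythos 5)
Your core argument is exactly the paper's: apply the Harnack inequality (Lemma~\ref{harnack}) to the divergence-form rewriting of $L_\epsilon u_\epsilon=0$ on balls of radius proportional to $\epsilon^2$ (so that $\nu R=O(1)$ and the Harnack constant is uniform in $\epsilon$), and then compare $\sup u_\epsilon$ with the probability mass of the ball. The paper splits into $|x|>R_*$ (where the tail bound of {\bf H$^1$)} forces $u_\epsilon\le e^{-|x|^p/2\epsilon^2}$ by contradiction) and $|x|\le R_*$ (where $\mu_\epsilon(B(x_1,\epsilon^2))\le 1$ forces $u_\epsilon\le C\epsilon^{-2n}$ by contradiction); your far-field/near-field split and your displayed inequality reproduce this. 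Up to that point your proof is complete and correct.

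The problem is the last third of your proposal. You read the exponent in the statement literally as $\epsilon^{-2n+1}=\epsilon\cdot\epsilon^{-2n}$ and conclude that you must improve the crude bound by a factor of $\epsilon$, via $\mu_\epsilon\bigl(B(x_0,\kappa\epsilon^2)\bigr)=O(\epsilon)$ uniformly in $x_0$. That exponent is a typo for $\epsilon^{-(2n+1)}$: the paper's own proof runs the contradiction from the hypothesis $u_\epsilon(x_1)\ge\epsilon^{-(2n+1)}$, concludes that ``$u_\epsilon$ is globally bounded by $\epsilon^{-2n-1}$,'' and the only place the lemma is used (the proof of the entropy-dimension equality) invokes the bound $\epsilon^{-2n-1}$. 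Since $C\epsilon^{-2n}\le\epsilon^{-(2n+1)}$ for small $\epsilon$, the ``crude bound'' you dismiss is already the whole content of the lemma; the extra power of $\epsilon$ is there to absorb the Harnack and volume constants, not to be earned. The refinement you then attempt is both unnecessary and, as you yourself flag, incomplete: the level-set machinery of the paper (the Gronwall estimate \eqref{c4}, Theorems~\ref{accurate} and~\ref{levelalpha}) controls the tails of $f(\rho)=\mu_\epsilon(\Omega_\rho(W))$ but gives no uniform modulus-of-continuity bound $f(\rho_+)-f(\rho_-)=O(\epsilon)$ on shells of thickness $\epsilon^3$, and your heuristic $|\nabla W|\sim\epsilon$ breaks down for $x_0$ within distance $O(\epsilon^2)$ of $\mathcal A$, precisely where $u_\epsilon$ could a priori be largest. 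Drop that paragraph, state the bound as $\epsilon^{-(2n+1)}$, and your proof coincides with the paper's.

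One small point worth noting in both your write-up and the paper's: for the far-field regime the Harnack constants $\Lambda/\lambda$ and $\nu$ depend on local bounds for $a_{ij}$, $f_i$ and the ellipticity lower bound, which are uniform only on compact sets since $f$ is merely $C^1$ and $A$ is only pointwise positive definite; the contradiction argument is applied at one fixed $x_0$ at a time, so this is harmless, but the constants there are not uniform over all of $\mathbb{R}^n$.
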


\begin{proof}

We first show that there are positive constants $p$, $\epsilon_{0}$
and $R_{*} < \infty$ independent of $\epsilon$ such that
\begin{equation}\label{(*)} u_{\epsilon}(x) \leq
e^{-\frac{|x|^{p}}{2\epsilon^{2}}}
\end{equation}
 for every $|x| > R_{*}$ and $\epsilon \in (0, \epsilon_{0})$.

\medskip

It follows from {\bf H$^{1}$)} that there exist positive constants $p$, $\epsilon_{*}$ and $R_{0}$ such that
$$
  \mu_{\epsilon}(B(0, r)) \leq e^{-\frac{r^{p}}{\epsilon^{2}}}
$$
for all $r > R_{0}$ and $\epsilon \in (0, \epsilon_{*})$. For the sake
of contradiction, we assume
$u_{\epsilon}(x_{0}) > e^{-|x_{0}|^{p}/2\epsilon^{2}}$ for some $x_{0} \in \mathbb{R}^{n}$ with $|x_{0}| > R_{0}+ 1$.
It follows from Lemma ~\ref{harnack} that there is a constant $C >0$ such that for any ball $B(x_{0},r)$ where $r < 1/4$, we have
$$
  \sup_{B(x_{0},r)} u_{\epsilon} \leq C \inf_{B(x_{0},r)} u_{\epsilon} \,,
$$
where $C = C_{0}(n)^{C_{1} + \nu r \epsilon^{-2}}$, $C_{0}$, $C_{1}$
and $\nu$ are constants independent of $\epsilon$. Let $r = \epsilon^{2}$ and $C_{*} = C_{0}(n)^{C_{1} + \nu}$. Then
\begin{equation}
\label{eq6-3}
  \int_{B(x_{0},r)}u_{\epsilon}(x) dx >
  \frac{\pi^{\frac{n}{2}}}{\Gamma(\frac{n}{2} + 1)}\frac{1}{C_{*}} \epsilon^{2n}  e^{-\frac{|x_{0}|^{p}}{2\epsilon^{2}}}\,.
\end{equation}

As $\epsilon$ approaches to $0$, $e^{\frac{|x_{0}|^{p}}{2\epsilon^{2}}}$
grows faster than any power of $\epsilon^{-1}$. Hence one can make $\epsilon$
sufficient small such that
$$
  \int_{B(x_{0},r)}u_{\epsilon}(x) dx > e^{-\frac{|x_{0}|^{p}}{\epsilon^{2}}} \,.
$$
This contradicts with {\bf H$^{1}$)}. Hence the claim holds for $p$,
$R_{0} = R_{*} + 1$ and some sufficiently small $\epsilon_{0}$.

\medskip

Next we consider the upper bound of $u_{\epsilon}(x)$ within
$B(0, R_{*})$.

\medskip

Assume, for the sake of contradiction, that $u_{\epsilon}(x_{1}) \geq
\epsilon^{-(2n+1)}$ at some point $x_{1} \in B(0, R_{*})$. Apply Lemma
~\ref{harnack} to $B(0,
R_{*} + 1)$. We have that for any $x \in B(0, R_{*})$ and $r \in (0, 1/4)$,
$$
  \sup_{B(x, r)} u_{\epsilon} \leq \hat{C} \inf_{B(x ,r)} u_{\epsilon} \,,
$$
where $\hat{C} = \hat{C}_{0}(n)^{\hat{C}_{1} + \hat{\nu} r
  \epsilon^{-2}}$, $\hat{C}_{0}$, $\hat{C}_{1}$ and $\hat{\nu}$ are constants independent of $\epsilon$ and $x$.

Let $r = \epsilon^{2}$ and consider the neighborhood $B(x_{1},r)$. By
Lemma ~\ref{harnack}, we have
$$
  \min_{B(x_{1},r)} u(x) \geq C_{3}\epsilon^{-(2n+1)} \,,
$$
where constant $C_{3}$ is independent of $\epsilon$. Thus, if
$\epsilon^{-1} >
C_{3}\frac{\pi^{\frac{n}{2}}}{\Gamma(1+\frac{n}{2})}$, then
$$
  \mu_{\epsilon}(B(x_{1},r)) >1 \,.
$$
This contradicts with the fact that $\mu_{\epsilon}$ is a probability
measure. Therefore,
\begin{equation}\label{(**)}
 u_{\epsilon}(x) \leq \epsilon^{-(2n+1)}, \quad x \in
  B(0, R_{*}) \,.
\end{equation}

\medskip

It now follows from \eqref{(*)} and \eqref{(**)} that
$u_{\epsilon}(x)$ is globally bounded by $\epsilon^{-2n-1}$ for
sufficient small $\epsilon > 0$. This completes the proof.

\end{proof}

Now we are ready to prove Theorem \ref{EntDimThm}.

\begin{proof}[Proof of Theorem \ref{EntDimThm}]

Let $\sigma \in (0, 1)$ be a small positive constant. Theorem
~\ref{accurate} implies that there are constants $M, \epsilon_{0} > 0$ such that
\begin{displaymath}
   \mu_{\epsilon}(B( \mathcal{A},M\epsilon)) \geq 1-\sigma
\end{displaymath}
for all $\epsilon \in (0, \epsilon_{0})$.

\medskip

Let
$$
F(u_{\epsilon}) = \int_{B(
\mathcal{A},M\epsilon)} u_{\epsilon}(x)\log u_{\epsilon}(x) \mathrm{d}x \,.
$$
Applying Lagrange multiplier with constraint $\int_{B(
\mathcal{A},M\epsilon)} u_{\epsilon} \mathrm{d}x = \mu_{\epsilon}(B(
\mathcal{A},M\epsilon))$, it is easy to see that
$F(u_{\epsilon})$ attains its minimum when $u_{\epsilon}$ is a
constant function on $B( \mathcal{A},M\epsilon)$. Thus
\begin{eqnarray*}
F( u_{\epsilon})&\geq&
\int_{B(\mathcal{A},C_{2}\epsilon)}u_{a}(x)\log u_{a}(x) \mathrm{d}x  \\
&\geq &
(1-\sigma)\log\frac{1-\sigma}{m(B(\mathcal{A},C_{2}\epsilon))} \\
&=& (1-\sigma)\log(1-\sigma) - (1-\sigma) \log m(B( \mathcal{A},M\epsilon))\,,
\end{eqnarray*}
where $u_{a}(x)$ is the constant function on $B( \mathcal{A},M\epsilon)$ with value $\frac{\mu_{\epsilon}(B( \mathcal{A},M\epsilon))}{ m(B(\mathcal{A},M\epsilon))}$.

\medskip

The regularity of  $\mathcal{A}$ implies that
\begin{displaymath}
   \lim_{r\rightarrow 0} \frac{\log m(B(\mathcal{A},r))}{\log
   r} = n-d \,.
\end{displaymath}

Thus
$$
  \lim_{\epsilon\rightarrow 0} \frac{\log m(B( \mathcal{A},M\epsilon))}{\log \epsilon} = \lim_{\epsilon\rightarrow 0} \frac{\log m(B( \mathcal{A},M\epsilon))}{\log M\epsilon} = n-d \,.
$$

\medskip

Next we estimate
$$
  \int_{\mathbb{R}^{n}\setminus B( \mathcal{A},M\epsilon)} u_{\epsilon}(x)\log u_{\epsilon}(x) \mathrm{d}x \,.
$$
It follows from Lemmas ~\ref{entout} and
~\ref{entinside} that there are positive constants $R_{*}$ and $\epsilon_{0}$, such that for all $\epsilon \in (0, \epsilon_{0})$, the integral of $u_{\epsilon} \log u_{\epsilon}$ on $B(0,R_{*})\setminus B( \mathcal{A}, C_{2}\epsilon)$ and $\mathbb{R}^{n}\setminus B(0,R_{*})$ are bounded from
below by $-2\sqrt{\sigma}$ and $-\epsilon^{2}$ respectively. Thus
$$
  \int_{\mathbb{R}^{n}\setminus B( \mathcal{A},M\epsilon)} u_{\epsilon}(x)\log u_{\epsilon} (x)\mathrm{d}x \geq -\epsilon^{2} - 2 \sqrt{\sigma}
$$
for $\epsilon \in (0, \epsilon_{0})$.
\medskip

 Now, for any $0<\sigma\ll 1$, some calculations show
that
\begin{eqnarray*}
&&\liminf_{\epsilon\rightarrow 0}\frac{\int_{\mathbb{R}^{n}}
  u_{\epsilon} (x)\log u_{\epsilon}(x) \mathrm{d}x}{-\log \epsilon} \\
&\geq
&\lim_{\epsilon\rightarrow 0}\frac{(1-\sigma)(\log(1-\sigma)-\log(m(B(\mathcal{A},M\epsilon))))}{-\log
\epsilon}  - \frac{\epsilon^{2}+2\sqrt{\sigma}}{-\log \epsilon}\\
&=&(1-\sigma)\lim_{\epsilon\rightarrow 0}\frac{\log
  m(B(\mathcal{A},M\epsilon))}{\log \epsilon}\\
&=&(1-\sigma)(n-d) \,.
\end{eqnarray*}
 Thus
\begin{displaymath}
   \liminf_{\epsilon\rightarrow 0}\frac{\mathcal{H}(u_{\epsilon})}{\log
     \epsilon}\geq n-d \,.
\end{displaymath}
This completes the proof.
\end{proof}
\medskip

In general, the reversed inequality of \eqref{EntDim}
\begin{equation}
  \label{entdimR}
   \limsup_{\epsilon\rightarrow 0}\frac{\mathcal{H}(\mu_{\epsilon})}{\log
     \epsilon}\leq n-d
\end{equation}
for $\mu_{\epsilon}$ cannot be shown by level set method. It can be
shown by Theorem ~\ref{levelalpha} and some calculation
that for sufficient small $\epsilon > 0$ the integral of
$u_{\epsilon}(x)$ on each level set $\Gamma_{\rho}(W)$ is bounded by
$\epsilon^{-1}$. However, the distributions of $u_{\epsilon}(x)$ on
each of the level sets are not clear.

\medskip

The theorem below gives some cases when \eqref{entdimR}
actually holds.

\medskip

%So the entropy-dimension identity could be given to every system with
%regular measures:

\medskip

%So the entropy-dimension identity could be given to every system with
%regular measures:
\begin{thm}
Assume {\bf H$^{0}$)} and {\bf H$^{1}$)} holds and the stationary measures $\mu_{\epsilon}$ are regular with respect to $\mathcal{A}$. Then
\begin{equation}
   \label{EndDimId}
   \lim_{\epsilon\rightarrow
     0}\frac{\mathcal{H}(\mu_{\epsilon})}{\log \epsilon} = n-d \,.
\end{equation}
\end{thm}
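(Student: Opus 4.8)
The plan is to combine the inequality already furnished by Theorem~\ref{EntDimThm} with a matching upper bound obtained from the regularity of the family $\{\mu_\epsilon\}$. Since Theorem~\ref{EntDimThm} gives $\liminf_{\epsilon\to0}\mathcal{H}(\mu_\epsilon)/\log\epsilon\geq n-d$, it remains only to prove $\limsup_{\epsilon\to0}\mathcal{H}(\mu_\epsilon)/\log\epsilon\leq n-d$. As $\log\epsilon<0$, this is equivalent to the lower bound $\mathcal{H}(\mu_\epsilon)\geq(n-d+o(1))\log\epsilon$ on the entropy; heuristically, the content is that $u_\epsilon$ cannot concentrate on a set much smaller than $B(\mathcal{A},K\epsilon)$, which is exactly what regularity rules out. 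Fix $\delta>0$ and let $K,C$ and $g:=u_{K,\epsilon}$ be as in the definition of regularity of $\{\mu_\epsilon\}$, write $B:=B(\mathcal{A},K\epsilon)$ and $V_\epsilon:=m(B)$, and split $\mathcal{H}(\mu_\epsilon)=-\int_B u_\epsilon\log u_\epsilon-\int_{B^c}u_\epsilon\log u_\epsilon$.

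First I would dispose of the complement and compute the entropy of the flat model $g$. Since $g$ is supported on $B$, the $L^1$-bound~(b) gives $\mu_\epsilon(B^c)=\int_{B^c}|u_\epsilon-g|\leq\delta$ and $\int_B g\in[1-\delta,1+\delta]$; together with Lemma~\ref{entbound} ($u_\epsilon\leq\epsilon^{-(2n+1)}$) one gets $-\int_{B^c}u_\epsilon\log u_\epsilon\geq-(2n+1)\delta|\log\epsilon|$, which is $O(\delta)$ after dividing by $\log\epsilon$ (the sharper tail estimates of Lemmas~\ref{entout} and~\ref{entinside} could be invoked but are not needed for this direction). For the model, the flatness~\eqref{regular} together with $\int_B g\approx1$ forces $\sup_B g\asymp 1/V_\epsilon$, hence $\log g=-\log V_\epsilon+O(1)$ throughout $B$, so $-\int_B g\log g=(1+O(\delta))\log V_\epsilon+O(1)$. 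The regularity of the set $\mathcal{A}$ gives $\log V_\epsilon/\log\epsilon\to n-d$, so $-\int_B g\log g$ is asymptotically $(n-d)\log\epsilon$, as expected.

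The crux is to transfer this estimate from $g$ to $u_\epsilon$, i.e. to show $\int_B(\phi(u_\epsilon)-\phi(g))\leq O(\delta|\log\epsilon|)$ where $\phi(x)=x\log x$; this is where regularity and Lemma~\ref{entbound} are combined. Two facts make the transfer work: flatness forces $g\geq\inf_B g\gtrsim\epsilon^{-(n-d)}$ everywhere on $B$ (so $g\geq1$ for small $\epsilon$), while Lemma~\ref{entbound} caps $\log u_\epsilon+1\leq(2n+1)|\log\epsilon|+1=:L$. On $\{u_\epsilon\geq g\}$ convexity of $\phi$ gives $\phi(u_\epsilon)-\phi(g)\leq(\log u_\epsilon+1)(u_\epsilon-g)\leq L(u_\epsilon-g)$, and integrating against the $L^1$-bound~(b) yields a contribution $\leq L\delta=O(\delta|\log\epsilon|)$. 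On $\{u_\epsilon<g\}$ the difference is nonpositive: where $u_\epsilon\geq 1/e$ the tangent-line bound $\phi(u_\epsilon)-\phi(g)\leq(\log u_\epsilon+1)(u_\epsilon-g)\leq0$ applies, while where $u_\epsilon<1/e$ one has $\phi(u_\epsilon)\leq0\leq\phi(g)$ directly (using $g\geq1$). Hence $-\int_B u_\epsilon\log u_\epsilon\geq-\int_B g\log g-L\delta$, and assembling the two regions gives $\mathcal{H}(\mu_\epsilon)\geq(1+O(\delta))\log V_\epsilon-O(\delta)|\log\epsilon|+O(1)$. Dividing by $\log\epsilon$, letting $\epsilon\to0$ and then $\delta\to0$ produces $\limsup_{\epsilon\to0}\mathcal{H}(\mu_\epsilon)/\log\epsilon\leq n-d$, which together with Theorem~\ref{EntDimThm} gives~\eqref{EndDimId}.

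I expect the transfer step to be the main obstacle, precisely because the differential entropy is not continuous with respect to $L^1$ convergence of densities: $L^1$-closeness alone permits a thin, tall spike carrying a small amount of mass but a large amount of $\phi(u_\epsilon)$. The two ingredients that defuse this are the uniform pointwise bound $u_\epsilon\leq\epsilon^{-(2n+1)}$, which keeps $\log u_\epsilon$ of size $O(|\log\epsilon|)$ so that any such spike costs at most $O(\delta|\log\epsilon|)$ against the $L^1$ budget, and the one-sided convexity estimate, which must be organised so that the delicate region where $u_\epsilon$ is near $0$ contributes with a favourable sign rather than through a divergent tangent slope.
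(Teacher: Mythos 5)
Your proposal is correct and follows essentially the same route as the paper: the liminf direction is quoted from Theorem~\ref{EntDimThm}, and the limsup direction compares $u_{\epsilon}$ with the flat approximant $u_{K,\epsilon}$, using the regularity of $\mathcal{A}$ to evaluate $\log m(B(\mathcal{A},K\epsilon))/\log\epsilon \to n-d$, the $L^{1}$-closeness $\delta$ together with the global bound $u_{\epsilon}\leq \epsilon^{-(2n+1)}$ of Lemma~\ref{entbound} to confine the error to $O(\delta|\log\epsilon|)$, and convexity of $x\log x$ for the transfer. The only difference is bookkeeping: you run a tangent-line convexity estimate split over $\{u_{\epsilon}\geq u_{K,\epsilon}\}$ and its complement and treat $B(\mathcal{A},K\epsilon)^{c}$ separately, while the paper uses a midpoint-convexity manipulation on $u_{K,\epsilon}+|u_{1}|$ over all of $\mathbb{R}^{n}$; both yield the same $3(2n+1)\delta$-type correction that vanishes as $\delta\to 0$.
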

\begin{proof}
It follows from the definition of regular stationary measures with
respect to $\mathcal{A}$ that for any $\delta > 0$ there are
positive constants $K$ and $\epsilon_{0}$ and a family of
approximate functions $u_{K, \epsilon}$ such that $u_{K, \epsilon}$
approximates $u_{\epsilon}$ in the vicinity of $\mathcal{A}$. By the
definition of regular stationary measures, there is a positive
constant $C$ independent of $\epsilon$ such that
\begin{displaymath}
   \min(u_{K, \epsilon}(x))\geq C \max(u_{K, \epsilon}(x)) ;\quad
   x \in B(\mathcal{A},K\epsilon)  \,.
\end{displaymath}
This means that
$$
u_{K,\epsilon}(x) \leq \frac{C^{-1}}{m(B(\mathcal{A},K\epsilon) )}
$$
for all $x \in B(\mathcal{A},K\epsilon)$.

\medskip

Let $u_{1} = u_{\epsilon} - u_{K, \epsilon}$. Then by the convexity of
$x\log x$, we have
\begin{eqnarray*}
  &&\int_{\mathbb{R}^{n}} u_{\epsilon}(x) \log u_{\epsilon}(x) \mathrm{d}x
   =    \int_{\mathbb{R}^{n}} (u_{K, \epsilon}(x) + u_{1}(x))\log
  (u_{K,\epsilon}(x) + u_{1}(x)) \mathrm{d}x\\
  &\leq& \int_{\mathbb{R}^{n}}  (u_{K, \epsilon}(x) + |u_{1}(x)|)\log
  (u_{K,\epsilon}(x) + |u_{1}(x)|) \mathrm{d}x \\
&&+ 2 \int_{\mathbb{R}^{n}}
  |u_{1}(x)| |\log (u_{K, \epsilon}(x) + |u_{1}(x)|)|  \mathrm{d}x \\
  &=& 2\int_{\mathbb{R}^{n}}  \frac{u_{K, \epsilon}(x) + |u_{1}(x)|}{2}\log
  \frac{u_{K,\epsilon}(x) + |u_{1}(x)|}{2} \mathrm{d}x \\
&&+ 2 \int_{\mathbb{R}^{n}}
  |u_{1}(x)| |\log (u_{K, \epsilon}(x) + |u_{1}(x)|)|  \mathrm{d}x  + \log 2 \\
  &\leq& \int_{\mathbb{R}^{n}} u_{K, \epsilon}(x) \log u_{K, \epsilon}(x)
  \mathrm{d}x + \int_{\mathbb{R}^{n}} |u_{1}(x)| \log |u_{1}(x)|
  \mathrm{d}x \\
&&+ 2 \int_{\mathbb{R}^{n}}
  |u_{1}(x)| |\log (u_{K, \epsilon}(x) + |u_{1}(x)|)|  \mathrm{d}x  + \log 2\\
&:=& \int_{\mathbb{R}^{n}} u_{K, \epsilon}(x) \log u_{K, \epsilon}(x)
  \mathrm{d}x + I_{1}
\end{eqnarray*}

It follows from Lemma ~\ref{entbound} that $u_{\epsilon}(x)$ is bounded
from above by $\epsilon^{-2n-1}$ globally, so are $u_{K,\epsilon}$ and $u_{1}$. Hence we have
$$
  I_{1} \leq 3(2n+1)(-\log\epsilon)\delta + \log 2 \,.
$$

\medskip

Take the limit $\epsilon\rightarrow 0$. The regularity of
$\mathcal{A}$ and the upper bound of $u_{K, \epsilon}$ together yield that
\begin{displaymath}
      \limsup_{\epsilon\rightarrow
     0}\frac{\mathcal{H}(\mu_{\epsilon})}{\log \epsilon} = \limsup_{\epsilon\rightarrow
     0}\frac{\int_{\mathbb{R}^{n}} u_{\epsilon}(x)\log u_{\epsilon}(x) \mathrm{d}x}{-\log \epsilon} \leq
   n-d+3\delta (2n+1) \,.
\end{displaymath}
The above inequality holds for any $\delta>0$. Hence
\begin{displaymath}
         \limsup_{\epsilon\rightarrow
     0}\frac{\mathcal{H}(\mu_{\epsilon})}{\log \epsilon} \leq n-d \,.
\end{displaymath}

 Combining this with  Theorem ~\ref{EntDimThm}, the proof
is completed.

\end{proof}

\begin{rem}
The entropy-dimension inequality and entropy-dimension equality will
be used in the second part of the series  when we
discuss the properties of degeneracy and complexity.
\end{rem}

\bibliographystyle{amsplain}
\bibliography{myref}

\providecommand{\bysame}{\leavevmode\hbox to3em{\hrulefill}\thinspace}
\providecommand{\MR}{\relax\ifhmode\unskip\space\fi MR }
% \MRhref is called by the amsart/book/proc definition of \MR.
\providecommand{\MRhref}[2]{%
  \href{http://www.ams.org/mathscinet-getitem?mr=#1}{#2}
}
\providecommand{\href}[2]{#2}
\begin{thebibliography}{10}

\bibitem{ao2007existence}
P.~Ao, C.~Kwon, and H.~Qian, \emph{On the existence of potential landscape in
  the evolution of complex systems}, Complexity \textbf{12} (2007), no.~4,
  19--27.

\bibitem{athreya2012propagating}
A.~Athreya, T.~Kolba, and J.C. Mattingly, \emph{Propagating lyapunov functions
  to prove noise--induced stabilization}, Electron. J. Probab \textbf{17}
  (2012), no.~96, 1--38.

\bibitem{bogachev2001regularity}
V.I. Bogachev, N.V. Krylov, and M.~R{\"o}ckner, \emph{On regularity of
  transition probabilities and invariant measures of singular diffusions under
  minimal conditions}, Communications in Partial Differential Equations
  \textbf{26} (2001), no.~11-12, 2037--2080.

\bibitem{bogachev2009elliptic}
\bysame, \emph{Elliptic and parabolic equations for measures}, Russian
  Mathematical Surveys \textbf{64} (2009), 973.

\bibitem{bogachev2012positive}
V.I. Bogachev, M.~R{\"o}ckner, and S.V. Shaposhnikov, \emph{On positive and
  probability solutions to the stationary fokker-planck-kolmogorov equation},
  Doklady Mathematics, vol.~85, Springer, 2012, pp.~350--354.

\bibitem{bressloff2014stochastic}
P.C. Bressloff, \emph{Stochastic processes in cell biology}, vol.~41, Springer,
  2014.

\bibitem{clark2011degeneracy}
E.~Clark, A.~Nellis, S.~Hickinbotham, S.~Stepney, T.~Clarke, M.~Pay, and
  P.~Young, \emph{Degeneracy enriches artificial chemistry binding systems},
  European Conference on Artificial Life, 2011.

\bibitem{day1994regularity}
M.V. Day, \emph{Regularity of boundary quasi-potentials for planar systems},
  Applied mathematics \& optimization \textbf{30} (1994), no.~1, 79--101.

\bibitem{day1985some}
M.V. Day and T.A. Darden, \emph{Some regularity results on the ventcel-freidlin
  quasi-potential function}, Applied Mathematics and Optimization \textbf{13}
  (1985), no.~1, 259--282.

\bibitem{delignieres2013degeneracy}
D.~Deligni{\`e}res and V.~Marmelat, \emph{Degeneracy and long-range
  correlations}, Chaos: An Interdisciplinary Journal of Nonlinear Science
  \textbf{23} (2013), no.~4, 043109.

\bibitem{delignieres2011degeneracy}
D.~Deligni{\`e}res, V.~Marmelat, and K.~Torre, \emph{Degeneracy and long-range
  correlation: a simulation study}, BIO Web of Conferences, vol.~1, EDP
  Sciences, 2011, p.~00020.

\bibitem{dembo2009large}
A.~Dembo and O.~Zeitouni, \emph{Large deviations techniques and applications},
  vol.~38, Springer Verlag, 2009.

\bibitem{edelman2001degeneracy}
G.M. Edelman and J.A. Gally, \emph{Degeneracy and complexity in biological
  systems}, Proceedings of the National Academy of Sciences \textbf{98} (2001),
  no.~24, 13763.

\bibitem{edelman1978mindful}
G.M. Edelman and V.B. Mountcastle, \emph{The mindful brain: Cortical
  organization and the group-selective theory of higher brain function.},
  Massachusetts Inst of Technology Pr, 1978.

\bibitem{feinberg1987chemical}
M.~Feinberg, \emph{Chemical reaction network structure and the stability of
  complex isothermal reactors--i. the deficiency zero and deficiency one
  theorems}, Chemical Engineering Science \textbf{42} (1987), no.~10,
  2229--2268.

\bibitem{feinberg1995existence}
\bysame, \emph{The existence and uniqueness of steady states for a class of
  chemical reaction networks}, Archive for Rational Mechanics and Analysis
  \textbf{132} (1995), no.~4, 311--370.

\bibitem{FW}
M.I. Fre{\u\i}dlin and A.D. Wentzell, \emph{Random perturbations of dynamical
  systems}, vol. 260, Springer Verlag, 1998.

\bibitem{gilbarg2001elliptic}
D.~Gilbarg and N.S. Trudinger, \emph{Elliptic partial differential equations of
  second order}, Springer Verlag, 2001.

\bibitem{gopalkrishnan2013lyapunov}
M.~Gopalkrishnan, \emph{On the lyapunov function for complex-balanced
  mass-action systems}, arXiv preprint arXiv:1312.3043 (2013).

\bibitem{grasman1999asymptotic}
J.~Grasman and O.A. Herwaarden, \emph{Asymptotic methods for the fokker-planck
  equation and the exit problem in applications}, Springer Science \& Business
  Media, 1999.

\bibitem{herzog2014noise}
D.P. Herzog and J.C. Mattingly, \emph{Noise-induced stabilization of planar
  flows i}, arXiv preprint arXiv:1404.0957 (2014).

\bibitem{heymann2008geometric}
M.~Heymann and E.~Vanden-Eijnden, \emph{The geometric minimum action method: A
  least action principle on the space of curves}, Communications on pure and
  applied mathematics \textbf{61} (2008), no.~8, 1052--1117.

\bibitem{horn1972general}
F.~Horn and R.~Jackson, \emph{General mass action kinetics}, Archive for
  rational mechanics and analysis \textbf{47} (1972), no.~2, 81--116.

\bibitem{huang2013escape}
S.~Huang and S.~Kauffman, \emph{How to escape the cancer attractor: rationale
  and limitations of multi-target drugs}, Seminars in cancer biology, vol.~23,
  Elsevier, 2013, pp.~270--278.

\bibitem{huang5}
W.~Huang, M.~Ji, Z.~Liu, and Y.~Yi, \emph{Concentration and limit behaviors of
  stationary measures}, submitted (2015).

\bibitem{huang1}
\bysame, \emph{Integral identity and measure estimates for stationary
  fokker-planck equations}, Annals of Probability \textbf{43} (2015), no.~4,
  1712--1730.

\bibitem{huang2}
\bysame, \emph{Steady states of fokker-planck equations: I. existence}, Journal
  of Dynamics and Differential Equations, to appear (2015).

\bibitem{kifer1988random}
Y.~Kifer, \emph{Random perturbations of dynamical systems}, Birkh{\"a}user
  Boston, 1988.

\bibitem{kitano2004biological}
H.~Kitano, \emph{Biological robustness}, Nature Reviews Genetics \textbf{5}
  (2004), no.~11, 826--837.

\bibitem{kitano2007towards}
\bysame, \emph{Towards a theory of biological robustness}, Molecular systems
  biology \textbf{3} (2007), no.~1.

\bibitem{li2012quantification}
Y.~Li, G.~Dwivedi, W.~Huang, M.L. Kemp, and Y.~Yi, \emph{Quantification of
  degeneracy in biological systems for characterization of functional
  interactions between modules}, Journal of Theoretical Biology \textbf{302}
  (2012), 29--38.

\bibitem{lu2014construction}
M.~Lu, J.~Onuchic, and E.~Ben-Jacob, \emph{Construction of an effective
  landscape for multistate genetic switches}, Physical review letters
  \textbf{113} (2014), no.~7, 078102.

\bibitem{ludwig1975persistence}
D.~Ludwig, \emph{Persistence of dynamical systems under random perturbations},
  Siam Review (1975), 605--640.

\bibitem{lv2015energy}
C.~Lv, X.~Li, F.~Li, and T.~Li, \emph{Energy landscape reveals that the budding
  yeast cell cycle is a robust and adaptive multi-stage process}, PLoS
  computational biology \textbf{11} (2015), no.~3, e1004156--e1004156.

\bibitem{maier1993escape}
R.~Maier and D.~Stein, \emph{Escape problem for irreversible systems}, Physical
  Review E \textbf{48} (1993), no.~2, 931.

\bibitem{newby2011asymptotic}
J.~Newby and J.~Keener, \emph{An asymptotic analysis of the spatially
  inhomogeneous velocity-jump process}, Multiscale Modeling \& Simulation
  \textbf{9} (2011), no.~2, 735--765.

\bibitem{newby2014asymptotic}
Jay Newby, \emph{Asymptotic and numerical methods for metastable events in
  stochastic gene networks}, arXiv preprint arXiv:1412.8446 (2014).

\bibitem{newby2012isolating}
Jay~M Newby, \emph{Isolating intrinsic noise sources in a stochastic genetic
  switch}, Physical biology \textbf{9} (2012), no.~2, 026002.

\bibitem{pesin1997dimension}
Y.B. Pesin, \emph{Dimension theory in dynamical systems: contemporary views and
  applications}, University of Chicago Press, 1997.

\bibitem{ren2004minimum}
W.~Ren and E.~Vanden-Eijnden, \emph{Minimum action method for the study of rare
  events}, Communications on pure and applied mathematics \textbf{57} (2004),
  no.~5, 637--656.

\bibitem{risken1996fokker}
H.~Risken, \emph{The fokker-planck equation: Methods of solution and
  applications}, vol.~18, Springer Verlag, 1996.

\bibitem{rizk2009general}
A.~Rizk, G.~Batt, F.~Fages, and S.~Soliman, \emph{A general computational
  method for robustness analysis with applications to synthetic gene networks},
  Bioinformatics \textbf{25} (2009), no.~12, i169.

\bibitem{schuss2009theory}
Z.~Schuss, \emph{Theory and applications of stochastic processes: an analytical
  approach}, vol. 170, Springer Science \& Business Media, 2009.

\bibitem{talkner1987mean}
P~Talkner, \emph{Mean first passage time and the lifetime of a metastable
  state}, Zeitschrift f{\"u}r Physik B Condensed Matter \textbf{68} (1987),
  no.~2-3, 201--207.

\bibitem{tononi1994measure}
G.~Tononi, O.~Sporns, and G.M. Edelman, \emph{A measure for brain complexity:
  relating functional segregation and integration in the nervous system},
  Proceedings of the National Academy of Sciences \textbf{91} (1994), no.~11,
  5033.

\bibitem{tononi1999measures}
\bysame, \emph{Measures of degeneracy and redundancy in biological networks},
  Proceedings of the National Academy of Sciences of the United States of
  America \textbf{96} (1999), no.~6, 3257.

\bibitem{wang2008potential}
J.~Wang, L.~Xu, and E.~Wang, \emph{Potential landscape and flux framework of
  nonequilibrium networks: Robustness, dissipation, and coherence of
  biochemical oscillations}, Proceedings of the National Academy of Sciences
  \textbf{105} (2008), no.~34, 12271--12276.

\bibitem{wang2010potential}
J.~Wang, L.~Xu, E.~Wang, and S.~Huang, \emph{The potential landscape of genetic
  circuits imposes the arrow of time in stem cell differentiation}, Biophysical
  journal \textbf{99} (2010), no.~1, 29--39.

\bibitem{zhou2012quasi}
J.~Zhou, M.~Aliyu, E.~Aurell, and S.~Huang, \emph{Quasi-potential landscape in
  complex multi-stable systems}, Journal of The Royal Society Interface
  \textbf{9} (2012), no.~77, 3539--3553.

\bibitem{zhou2008adaptive}
X.~Zhou, W.~Ren, and E~Weinan, \emph{Adaptive minimum action method for the
  study of rare events}, The Journal of chemical physics \textbf{128} (2008),
  no.~10, 104111.

\end{thebibliography}

\end{document}